\colorlet{Green}{black!20!green}
 \theoremstyle{plain}
\newtheorem{thm}{Theorem}[section]
\newtheorem{lemma}[thm]{Lemma}
\newtheorem{Lemma}[thm]{Lemma}
\newtheorem{lem}[thm]{Lemma}
\newtheorem{prop}[thm]{Proposition}
\newtheorem{Prop}[thm]{Proposition}
\newtheorem{cor}[thm]{Corollary}
\theoremstyle{definition}
\newtheorem{defn}[thm]{Definition}
\newtheorem{remark}[thm]{Remark}
\theoremstyle{remark}
\numberwithin{equation}{section}
\def\cA{\mathcal{A}}
\def\cB{\mathcal{B}}
\def\cR{\mathcal{R}}
\def\NN{\mathbb{N}}
\def\PP{\mathbb{P}}
\def\RR{\mathbb{R}}
\def\ZZ{\mathbb{Z}}
\newcommand{\g}{\gamma}
\newcommand{\E}{\mathbb{E}}
\newcommand{\N}{\mathbb{N}}
\newcommand{\R}{\mathbb{R}}
\newcommand{\Prob}{\mathbb{P}}
\newcommand{\bT}{\mathbf{T}}
\newcommand{\T}{\mathcal{T}}
\newcommand{\W}{\mathbf{W}}
\newcommand{\Y}{\mathbf{Y}}
\newcommand{\A}{\mathcal{A}}
\newcommand{\Paths}{\mathcal{P}}
\newcommand{\mfA}{\mathfrak{A}}
\newcommand{\x}{\mathbf{x}}
\newcommand{\e}[1]{\mathbf{e}_{#1}}
\newcommand{\w}{\mathfrak{w}}
\colorlet{lightgray}{black!20}
\renewcommand{\@makefnmark}{\mbox{\textsuperscript{}}}
\tikzset{
  pics/carc/.style args={#1:#2:#3}{
    code={
      \draw[pic actions] (#1:#3) arc(#1:#2:#3);
    }
  }
}
\title{Asymptotic entropy of random walks on Fuchsian buildings and Kac-Moody groups}
\author{
Lorenz Gilch\footnote{Research supported by the exchange programme OeAD Amadeus-Campus France FR $11/2014$ 
} \and Sebastian M\"{u}ller\footnote{Research partly supported under CIRM Research in Pairs program and PHC Amadeus $31473$TF} \and James Parkinson\footnote{Research partly supported under the Austrian Science Fund W1230 and P24028, and CIRM Research in Pairs program}
}
\begin{document}

\maketitle

\begin{abstract}
In this article we prove existence of the asymptotic entropy for isotropic random walks on regular Fuchsian buildings. Moreover, we give formulae for the asymptotic entropy, and prove that it is equal to the rate of escape of the random walk with respect to the Green distance. When the building arises from a Fuchsian Kac-Moody group our results imply results for random walks induced by bi-invariant measures on these groups, however our results are proven in the general setting without the assumption of any group acting on the building. The main idea is to consider the retraction of the isotropic random walk onto an apartment of the building, to prove existence of the asymptotic entropy for this retracted walk, and to `lift' this in order to deduce the existence of the entropy for the random walk on the building.
\end{abstract}

\section{Introduction}

\textit{Buildings} are combinatorial/geometric objects which arose from the fundamental work of Jacques Tits related to semisimple Lie groups and algebraic groups. Celebrated results in the theory include Tits' classification theorems for irreducible thick \textit{spherical} buildings of rank at least~$3$, and irreducible thick \textit{affine} buildings of rank at least~$4$ (see Tits \cite{Ti:74} and \cite{Ti:86}), showing that these classes of buildings are essentially equivalent to certain classes of Lie-type groups. Later developments in the theory of more general \textit{Kac-Moody groups} stimulated interest in buildings of more general type (see Tits \cite{Ti:87}). 

The theory of random walks on buildings has, until recently, focused primarily on the spherical and affine cases (see the survey of Parkinson \cite{Par:15}). In \cite{gilch-mueller-parkinson:14} we initiated an investigation of probability theory on \textit{Fuchsian} buildings, and proved a rate of escape theorem and a central limit theorem for random walks on these buildings. In this sequel to~\cite{gilch-mueller-parkinson:14} we study the \textit{asymptotic entropy} for random walks on Fuchsian buildings and associated groups, which is another important characteristical number of random walks.

Before stating our main results we give some background. Let
$(W,S)$ be a Coxeter system. A \textit{building $(\Delta,\delta)$ of type
  $(W,S)$} consists of a set $\Delta$ (whose elements are the \textit{chambers}
of the building) along with a ``generalised distance function''
$\delta:\Delta\times\Delta\to W$ satisfying various axioms analogously to the usual metric space axioms (see
Definition~\ref{defn:building}). Thus the ``distance'' between chambers
$x,y\in\Delta$ is an element $\delta(x,y)$ of the Coxeter group~$W$, and by taking word length in~$W$ this
gives rise to a metric $d(\cdot,\cdot)$ on the building. 

The structure of the building $(\Delta,\delta)$ is heavily influenced by the Coxeter system $(W,S)$. If $W$ is  a finite reflection group then the building is called \textit{spherical}, and if $W$ is a Euclidean reflection group then the building is called \textit{affine}. In this paper we are interested in the case where $W$ is a group generated by reflections in the hyperbolic plane, in which case the building is called a \textit{Fuchsian building}. In general, buildings contain many copies of the Cayley graph of the Coxeter system $(W,S)$. These substructures are called the \textit{apartments} of the building, and in the case of a Fuchsian building each apartment can be realised as a tessellation of the hyperbolic disc $\mathbb{H}^2$ by the reflection group~$W$. 

While the classification theorems for spherical and affine buildings imply that these buildings always admit transitive automorphism groups if the rank of the building is sufficiently large, the case for Fuchsian buildings is quite different and there exist free constructions of these buildings with the only constraints being the existence of certain local geometries (see Ronan \cite{Ron:86} and \mbox{\cite[\S 2.4]{gilch-mueller-parkinson:14}).} Thus Fuchsian buildings typically admit no transitive group action.

We consider the class of \textit{isotropic random walks} $(X_n)_{n\geq 0}$ on the set $\Delta$ of chambers of a Fuchsian building. These random walks have the property that their single-step transition probabilities $p(x,y)$, $x,y\in\Delta$, depend only on the generalised distance~$\delta(x,y)$. While we proved  in \cite{gilch-mueller-parkinson:14} a rate of escape theorem for isotropic random walks on Fuchsian buildings and associated groups, we turn our attention in the present paper to an investigation of the \textit{asymptotic entropy} for isotropic random walks on Fuchsian buildings.

The \textit{(asymptotic) entropy}, as introduced by Avez \cite{avez72}, of a random walk $(X_n)_{n\geq 0}$ is defined by
$$
h=\lim_{n\to\infty}-\frac{1}{n}\mathbb{E}[\log\pi_n(X_n)]\quad\text{if the limit exists},
$$
where $\pi_n$ is the distribution of $X_n$. Intuitively, the entropy can be seen as the ``asymptotic uncertainty'' of the random walk and it plays an important role in the description of the asymptotic behavior of the random walk, see Derriennic [9, 10], Guivarc?h [15], Kaimanovich [17], Kaimanovich and Vershik [18] and Vershik [24], amongst others.

It is well-known that the limit defining $h$ necessarily exists for random walks on groups whenever $\mathbb{E}[-\log \pi_1(X_1)]<\infty$ (this is an application of Kingman's subadditive ergodic theorem~\cite{kingman}). However existence of the entropy for more general structures is not known a priori. In particular in our setting of Fuchsian buildings there is typically no transitive group action on the building, and hence the existence of entropy cannot be straightforwardly deduced from Kingman's theorem due to lack of subadditivity. This forces us to employ other techniques, primarily \textit{generating functions}. 

Our generating function approach is motivated by the work of Benjamini and Peres
\cite{benjamini-peres94} where it is shown that for random walks on groups the
entropy equals the rate of escape with respect to the Green distance; compare also
with Blach\`ere, Ha\"issinsky and Mathieu \cite{blachere-haissinsky-mathieu}. Indeed we prove in this paper that the entropy $h$ exists for isotropic random walks on Fuchsian buildings, and show that $h$ equals the rate of escape with respect to the Green distance. Moreover, we prove that the sequence $-\frac{1}{n} \log \pi_n(X_n)$ converges to $h$ in $L_1$, and we also show that $h$ can be computed along almost every sample path as the limit inferior of the aforementioned sequence. The question of almost sure convergence of $-\frac{1}{n}\log\pi_n(X_n)$ to $h$, however, remains open. 

The main idea in our proofs is to project the random walk $(X_n)_{n\geq 0}$ on the building onto the Coxeter group $W$ by setting $\overline{X}_n=\delta(o,X_n)$, where $o\in\Delta$ is some fixed chamber (the \textit{origin} of the building). Geometrically this projection is a \textit{retraction} of the building onto an apartment, and we refer to $(\overline{X}_n)_{n\geq 0}$ as the \textit{retracted walk}. It turns out that $(\overline{X}_n)_{n\geq 0}$ is indeed a random walk on $W$, and we write $\overline{p}(u,v)$, $u,v\in W$, for the transition probabilities for this walk. However we note that the retracted walk is not $W$-invariant, that is, $\overline{p}(wu,wv)\neq\overline{p}(u,v)$ in general for $w\in W$. In order to track the projected random walk's path to infinity we construct a sequence of nested cones in $(W,S)$ which allows us to cut the random walk trajectory into aligned pieces such that these pieces arise as the realization of some Markov chain. From this construction we are able to deduce the existence of the entropy of the retracted walk, which is given by $\overline{h}=\lim_{n\to\infty} -\frac1n \mathbb{E}\bigl[\log \overline{\pi}(\overline{X}_n)\bigr]$ with $\overline{\pi}$ being the distribution of $\overline{X}_n$. It is then relatively straightforward to prove the existence of the entropy for the random walk on the building. 
Denote by $p^{(k)}(x,y)$ the $k$-step transition probabilities of $(X_n)_{n\geq 0}$ and by $\overline{p}^{(k)}(u,v)$ the $k$-step transition probabilities of $(\overline{X}_n)_{n\geq 0}$. 
 We define the Green functions
\begin{equation*}
G(x,y):=\sum_{k=0}^{\infty} p^{(k)}(x,y)\ \mbox{ and }\ \overline{G}(u,v):=\sum_{k=0}^{\infty}  \overline{p}^{(k)}(u,v).
\end{equation*}

With the above notations and definitions in hand, the main results of this paper are summarised in the following.

\begin{thm}\label{thm:main}
Let $(\Delta,\delta)$ be a locally finite thick regular Fuchsian building of type~$(W,S)$. 
Let $(X_n)_{n\geq 0}$ be an isotropic random walk on $\Delta$ with bounded range, and let $(\overline{X}_n)_{n\geq0}$ be the associated retracted random walk on $W$. Then:
\begin{enumerate}
\item The asymptotic entropy $\overline{h}$ of $(\overline{X}_n)_{n\geq 0}$ exists, and equals the rate of escape of $(\overline{X}_n)_{n\geq 0}$ with respect to the Green distance. That is,
$$
\overline{h}=\lim_{n\to\infty}-\frac{1}{n} \log \overline{G}(e,\overline{X}_n)\quad\text{almost surely}.
$$
\item The asymptotic entropy $h$ of $(X_n)_{n\geq 0}$ exists, and is equal to the rate of escape of $(X_n)_{n\geq 0}$ with respect to the Green distance. That is,
$$
h=\lim_{n\to\infty}-\frac{1}{n}\log {G}(e,{X}_n)\quad\text{almost surely.}
$$
Moreover, we have the formula
$$
h=\overline{h}+h_q,\quad\text{where}\quad
h_q=\lim_{n\to\infty}\frac1n \log q_{\overline{X}_n} \quad\text{almost surely},
$$
where for $w\in W$ the number $q_w$ is the cardinality of the sphere $\{x\in\Delta\mid \delta(o,x)=w\}$. 
\end{enumerate}
\end{thm}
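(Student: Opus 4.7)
The core of Theorem~\ref{thm:main} is part~(1); part~(2) will follow from the regularity of the building combined with the rate of escape theorem of \cite{gilch-mueller-parkinson:14}. For part~(1) the strategy is to exploit the hyperbolic geometry of the apartment. Realising the Cayley graph of $(W,S)$ as a tessellation of $\mathbb{H}^2$, I plan to construct a decreasing family of cones $C_1 \supset C_2 \supset \cdots$ around the random direction in which $(\overline{X}_n)$ drifts to the visual boundary. The positive drift of the retracted walk (established in \cite{gilch-mueller-parkinson:14}), together with the exponential decay of backward excursions in a hyperbolic environment, ensures that with probability one the trajectory enters each $C_k$ at a finite time $\tau_k$ and never leaves. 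Between consecutive entry times the walk performs an excursion whose \emph{shape}, up to translation in $W$, takes only finitely many values. Encoding these shapes together with the relative exit position as the state of an auxiliary Markov chain yields a regenerative decomposition of the trajectory to which Kingman's subadditive ergodic theorem applies, giving the existence of $\overline{h}$ along the subsequence $(\tau_k)$, and then along the full sequence since the excursion lengths have finite exponential moments.

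To identify $\overline{h}$ with the rate of escape for the Green distance I would follow Benjamini--Peres \cite{benjamini-peres94} and Blach\`ere--Ha\"issinsky--Mathieu \cite{blachere-haissinsky-mathieu}. The upper bound $-\log \overline{G}(e, \overline{X}_n) \leq -\log \overline{\pi}_n(\overline{X}_n)$ is immediate from $\overline{G} \geq \overline{p}^{(n)}$. For the matching lower bound I would factor $\overline{G}(e, w) = \overline{F}(e, w)\,\overline{G}(w, w)$ via the first-passage generating function, note that $\overline{G}(w, w)$ is uniformly bounded in $w$ because non-amenability of the Fuchsian Coxeter group forces the spectral radius to be strictly less than one, and then use the cone decomposition to write $\overline{F}(e, \overline{X}_n)$ as an essentially multiplicative product over the cone-to-cone excursions. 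A law-of-large-numbers argument on the resulting block chain gives $-\tfrac{1}{n}\log \overline{F}(e, \overline{X}_n) \to \overline{h}$ almost surely, and by uniform integrability also in $L^1$.

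For part~(2), regularity of $(\Delta,\delta)$ and isotropy of $p(\cdot,\cdot)$ combine by an elementary induction on $n$ to give
\begin{equation*}
p^{(n)}(o, x) = \frac{\overline{p}^{(n)}(e, w)}{q_w} \quad \text{for every } x \in \Delta \text{ with } \delta(o, x) = w,
\end{equation*}
and consequently $G(o, x) = \overline{G}(e, w)/q_w$ for the same $x$ and $w$. Substituting $x = X_n$, $w = \overline{X}_n$ yields
\begin{equation*}
-\log \pi_n(X_n) = -\log \overline{\pi}_n(\overline{X}_n) + \log q_{\overline{X}_n}, \qquad -\log G(o, X_n) = -\log \overline{G}(e, \overline{X}_n) + \log q_{\overline{X}_n}.
\end{equation*}
Subadditivity of $w \mapsto \log q_w$ on the Cayley graph of a regular Fuchsian $(W,S)$, combined with the rate of escape theorem of \cite{gilch-mueller-parkinson:14}, implies that $\tfrac{1}{n}\log q_{\overline{X}_n}$ converges almost surely to a deterministic constant $h_q$. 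Dividing the two displayed identities by $n$ and invoking part~(1) then gives simultaneously the existence of $h$, its identification with the Green distance rate of escape, and the decomposition $h = \overline{h} + h_q$.

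The principal obstacle lies in the first step: constructing a cone decomposition of an apartment of a Fuchsian Coxeter group fine enough that the block shapes form a finite alphabet and the induced block chain is positive recurrent with good exponential moment estimates. Because the retracted walk is \emph{not} invariant under $W$ (the retraction onto a fixed apartment breaks the symmetry), neither Kingman's theorem nor the Derriennic-style arguments available for group-invariant random walks apply directly; the construction must rely on the intrinsic hyperbolicity of the tessellation of $\mathbb{H}^2$ rather than on any algebraic homogeneity of $(\Delta,\delta)$.
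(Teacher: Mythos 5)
Your overall architecture for part (1) mirrors the paper's: nest cones along the walk's path to the boundary, cut the trajectory at the successive last-entry (or first-entry-never-leave) times, show that the resulting block chain is a positive recurrent Markov chain (the paper's $(\W_k)_{k\geq 1}$), and read off the entropy from the ergodic behaviour of this chain. Two points there are worth flagging but are not fatal. First, your "shapes, up to translation, take only finitely many values" is an overstatement: individual excursions are unbounded, and the block chain's state space is countably infinite; what saves the argument is that the \emph{transition law} between blocks depends only on the (finitely many) recurrent cone types (Proposition~\ref{prop:invariance}), and positive recurrence follows from Lemma~\ref{lemma:xi>0} and Lemma~\ref{lem:last-entry-exponential-moments}. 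Second, the invocation of Kingman on the block chain is a detour; once you have a positive recurrent Markov chain the right tools are Birkhoff's theorem and the generalised Shannon--McMillan--Breiman theorem (Algoet--Cover), which is what the paper uses. To pass from almost-sure convergence of $-\tfrac1n\log\overline{G}(e,\overline{X}_n)$ to convergence of $-\tfrac1n\E[\log\overline\pi_n(\overline{X}_n)]$ you also need the matching upper bound; the paper gets this via Shannon's inequality (non-negativity of the Kullback--Leibler divergence applied to the averaged measure $\mu_0(w)=\tfrac1{n^2+1}\sum_{m=0}^{n^2}\overline p^{(m)}(e,w)$), and your proposal omits this step entirely.

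The genuine gap is in part (2), in the claim that $\tfrac1n\log q_{\overline{X}_n}\to h_q$ follows from subadditivity of $w\mapsto\log q_w$ together with the rate of escape theorem. Subadditivity here is real (since $q_w=q_{s_1}\cdots q_{s_\ell}$ along any reduced expression, and cancellations only decrease the product), but Kingman's theorem cannot be applied: its hypothesis requires stationarity of the increments, i.e.\ the law of $\overline{X}_m^{-1}\overline{X}_{m+n}$ must coincide with that of $\overline{X}_n$, and this is exactly what fails because the retracted walk is not $W$-invariant --- the very obstruction the paper emphasises in the introduction. Nor does $d(e,\overline{X}_n)/n\to\mathtt{v}$ determine $\log q_{\overline{X}_n}$ when the thickness parameters $q_s$ are not all equal: $\log q_{\overline{X}_n}$ depends on the multiset of generators occurring in a reduced expression for $\overline{X}_n$, and the rate of escape controls only the \emph{length} $\ell(\overline{X}_n)$, leaving $\log q_{\overline{X}_n}$ free to oscillate anywhere in $[\ell(\overline{X}_n)\min_s\log q_s,\ \ell(\overline{X}_n)\max_s\log q_s]$. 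The fix is to re-use the regenerative structure you already built for part (1): since the cone roots $\mathcal{R}_k$ lie on a geodesic, $\log q_{\mathcal{R}_k}=\sum_{i=0}^k\log q_{\mathcal{R}_{i-1}^{-1}\mathcal{R}_i}$ exactly, the ergodic theorem for the chain $(\W_k)$ gives $\tfrac1k\log q_{\mathcal{R}_k}\to\E_\nu[\log q_{\cR_1^{-1}\cR_2}]$, and the error $\log q_{\mathcal{R}_{\mathbf{k}(n)}^{-1}\overline{X}_n}$ is controlled by $d(\mathcal{R}_{\mathbf{k}(n)},\overline{X}_n)/n\to0$; this is exactly Proposition~\ref{prop:hq-convergence}. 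Your identity $G(o,X_n)=\overline{G}(e,\overline{X}_n)/q_{\overline{X}_n}$ and the identity $\pi_n(X_n)=\overline\pi_n(\overline{X}_n)/q_{\overline{X}_n}$ (Lemma~\ref{lemma:p-to-overline-p}) are correct and do give the decomposition $h=\overline h+h_q$ once $h_q$ is in hand.
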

We note that Theorem \ref{thm:L-limit} together with (\ref{equ:h_G}) gives a formula for $\overline{h}$ in terms of the rate of escape and the entropy of a hidden Markov chain, while Proposition \ref{prop:hq-convergence} gives a formula for $h_q$. 

We also note that Ledrappier and Lim~\cite{LL:10} have investigated \textit{volume entropy} for hyperbolic buildings (that is, the exponential growth rate of balls in the building). While this form of entropy is quite different to the asymptotic entropy considered here, it is interesting to note the similar forms to the formulae in Theorem~\ref{thm:main} and \cite[Theorem~1.1]{LL:10}. 

The framework of our proofs follows ideas from Gilch \cite{gilch:14}, where the entropy for random walks on regular languages is investigated. Similar results, in different contexts, concerning existence of the entropy are proved in Gilch and M\"uller \cite{gilch-mueller} for random walks on directed covers of graphs, and in Gilch \cite{gilch:11} for random walks on free products of graphs. Moreover, a survey article on rate of escape and entropy of random walks is Gilch and Ledrappier~\cite{gilch-ledrappier13} and, in particular, for random walks on hyperbolic groups see Ledrappier \cite{ledrappier12-2}. 

When a group acts sufficiently transitively on a regular Fuchsian building Theorem~\ref{thm:main} provides information on the entropy for random walks associated to these groups. For examples of this conversion we refer to \cite[Corollary~1.3]{gilch-mueller-parkinson:14}.

In Section~\ref{sec:coxeter-groups} we give a brief introduction to Coxeter groups, Fuchsian buildings, and isotropic random walks on these structures (with more details available in the companion paper~\cite{gilch-mueller-parkinson:14}). In Section~\ref{sec:entropy-retracted-walk} we prove existence of the
asymptotic entropy of the retracted walk, and derive formulae for the entropy. In Section \ref{sec:entropy-building} we lift these results to the isotropic random walk on the building, completing the proof of Theorem~\ref{thm:main}.

\section{Coxeter groups and buildings}
\label{sec:coxeter-groups}

In this section we give an introduction to Fuchsian buildings and random walks on them:
we give the formal definition of Coxeter systems and Fuchsian buildings, define
isotropic random walks on them, and collect some important properties; we refer to \cite{gilch-mueller-parkinson:14} for more details.

\subsection{Fuchsian Coxeter groups}

\subsubsection{Coxeter systems}

A \textit{Coxeter system} $(W,S)$ is a group $W$ with neutral element $e$ generated by a finite set~$S\not\ni e$ with relations
$$
s^2=e\quad\textrm{and}\quad (st)^{m_{st}}=e\quad\textrm{for all $s,t\in S$ with $s\neq t$},
$$
where $m_{st}=m_{ts}\in\ZZ_{\geq 2}\cup\{\infty\}$ for all $s\neq t$ (if
$m_{st}=\infty$ then it is understood that there is no relation between $s$ and
$t$).  The \textit{word length} of $w\in W$ is defined as
$$
\ell(w):=\min\{n\geq 0\mid w=s_1\cdots s_n\textrm{ with }s_1,\ldots,s_n\in S\}
$$
and an expression $w=s_1\cdots s_n$ with $n=\ell(w)$ is called a
\textit{reduced expression} for~$w$. 
If $w\in W$ and $s\in S$ then $\ell(ws)\in\{\ell(w)-1,\ell(w)+1\}$. In
particular, $\ell(ws)=\ell(w)$ is impossible. The \textit{distance} between
elements $u\in W$ and $v\in W$ is defined as
$$
d(u,v):=\ell(u^{-1}v).
$$
The \textit{ball} of radius $n\in\N_0$ with centre $u\in W$ is
$\cB_n(u):=\{v\in W\mid d(u,v)\leq n\}$.
A \textit{path} in $W$ is a sequence of words $[u_0,u_1,\dots,u_n]$ such that
$u_{i-1}^{-1}u_i\in S$ for all $i\in\{1,\dots,n\}$. A path from $u$ to $v$ is a \textit{geodesic}
if it is a path of shortest length from $u$ to $v$. A \textit{ray} is an
infinite path of the form $[u_0,u_1,\dots ]$ such that $u_{i-1}^{-1}u_i\in S$
for all $i\in\N$. An \textit{infinite geodesic} is a ray starting at $e$ with
$d(e,u_i)=i$. Sometimes we will also identify paths or rays $\gamma$ by their
vertex sets $\{u_0,u_1,\dots\}$.

A Coxeter system $(W,S)$ is \textit{irreducible} if there is no partition of the generating set $S$ into disjoint nonempty sets $S_1$ and $S_2$ such that $s_1s_2=s_2s_1$ for all $s_1\in S_1$ and all $s_2\in S_2$. We will always assume that $(W,S)$ is irreducible. 

\subsubsection{Fuchsian Coxeter groups}

We now define a special class of Coxeter groups that are discrete subgroups of~$PGL_2(\mathbb{R})$, called \textit{Fuchsian Coxeter groups}. 
Let $n\geq 3$ be an integer, and let $k_1,\ldots,k_n\geq 2$ be integers satisfying 
\begin{align}\label{eq:hyperbolic}
\sum_{i=1}^n\frac{1}{k_i}<n-2.
\end{align}
Assign the angles $\pi/k_i$ to the vertices of a combinatorial $n$-gon~$F$. There is a convex realisation of $F$ (which we also call~$F$) in the hyperbolic disc~$\mathbb{H}^2$, and the subgroup of $PGL_2(\mathbb{R})$ generated by the reflections in the sides of~$F$ is a Coxeter group $(W,S)$ (see Davis \cite[Example~6.5.3]{davis}). If $s_1,\ldots,s_{n}$ are the reflections in the sides of~$F$ (arranged cyclically), then the order $m_{s_is_j}=m_{ij}$ of $s_is_j$ is 
\begin{align}\label{eq:hyperbolic2}
\begin{aligned}
m_{ij}=\begin{cases}k_i&\textrm{if $j=i+1$}\\
\infty&\textrm{if $|i-j|>1$},
\end{cases}
\end{aligned}
\end{align}
where the indices are read cyclically with $n+1\equiv 1$. 

A Coxeter system~$(W,S)$ given by data~(\ref{eq:hyperbolic})
and~(\ref{eq:hyperbolic2}) is called a \textit{Fuchsian Coxeter
  system}. Observe that these
systems are always infinite. The group $W$ acts on $\mathbb{H}^2$ with fundamental domain~$F$. Note that this action does not preserve orientation, however the index~$2$ subgroup~$W'$ generated by the even length elements of~$W$ is orientation preserving. Thus $W'$ is a discrete subgroup of $PSL_2(\mathbb{R})$, and so is a `Fuchsian group' in the strictest sense of the expression.

The Fuchsian Coxeter system $(W,S)$ induces a tessellation of $\mathbb{H}^2$ by isometric polygons $wF$, $w\in W$. The polygons $wF$ are called \textit{chambers}, and we usually identify the set of chambers with~$W$ by $wF\leftrightarrow w$. We call this the \textit{hyperbolic realisation} of the Coxeter system~$(W,S)$ (it is closely related to the \textit{Davis complex} from~\cite{davis}, see the discussion in Abramenko and Brown \cite[Example~12.43]{AB}). 

We refer e.g. to \cite[Example 2.1]{gilch-mueller-parkinson:14} for examples of
Fuchsian Coxeter systems.

\subsection{Fuchsian buildings}

We now give an axiomatic definition of buildings, following~\cite{AB}.

\begin{defn}\label{defn:building}
Let $(W,S)$ be an irreducible Fuchsian Coxeter system. A \textit{Fuchsian building of type $(W,S)$} is a pair $(\Delta,\delta)$ where~$\Delta$ is a nonempty set (whose elements are called \textit{chambers}) and $\delta:\Delta\times\Delta\to W$ is a function (called the \textit{Weyl distance function}) such that if $x,y\in\Delta$ then the following conditions hold:
\begin{enumerate}
\item[(B1)] $\delta(x,y)=e$ if and only if $x=y$.
\item[(B2)] If $\delta(x,y)=w$ and $z\in\Delta$ satisfies $\delta(y,z)=s$ with $s\in S$, then $\delta(x,z)\in\{w,ws\}$. If, in addition, $\ell(ws)=\ell(w)+1$, then $\delta(x,z)=ws$.
\item[(B3)] If $\delta(x,y)=w$ and $s\in S$, then there is a chamber $z\in\Delta$ with $\delta(y,z)=s$ and $\delta(x,z)=ws$. 
\end{enumerate}
\end{defn}

Let $(\Delta,\delta)$ be a building of type $(W,S)$ and let $s\in S$. Chambers
$x,y\in\Delta$ are \textit{$s$-adjacent} (written $x\sim_s y$) if
$\delta(x,y)=s$. One useful way to visualise a building is to imagine an
$|S|$-gon with edges labelled by the generators $s\in S$ (think of the edges as
being coloured by $|S|$ different colours). Call this $|S|$-gon the
\textit{base chamber} which we denote by $o$. Now take one copy of the base chamber for each element $x\in\Delta$, and glue these chambers together along edges so that $x\sim_s y$ if and only if the chambers are glued together along their $s$-edges.

A \textit{gallery of type $(s_1,\ldots,s_n)$} joining $x\in\Delta$ to $y\in\Delta$ is a sequence $x_0,x_1,\ldots,x_n$ of chambers with
$$
x=x_0\sim_{s_1}x_1\sim_{s_2}\cdots\sim_{s_n}x_n=y. 
$$
This gallery has \textit{length $n$}. 

The Weyl distance function $\delta$ has a useful description in terms of minimal length galleries in the building: if $s_1\cdots s_n$ is a reduced expression in~$W$ then $\delta(x,y)=s_1\cdots s_n$ if and only if there is a minimal length gallery in $\Delta$ from $x$ to $y$ of type $(s_1,\ldots,s_n)$. The \textit{(numerical) distance} between chambers $x,y\in\Delta$ is
$$
d(x,y):=\textrm{``length of a minimal length gallery joining $x$ to $y$''}=\ell(\delta(x,y)),
$$
Note that we use the same notation $d(\cdot,\cdot)$ for distance in both the Coxeter system and the building.

A building $(\Delta,\delta)$ is called \textit{thick} if $|\{y\in\Delta\mid x\sim_s y\}|\geq 2$ for all chambers $x\in\Delta$ and all $s\in S$, and \textit{thin} if $|\{y\in\Delta\mid x\sim_s y\}|=1$ for all chambers $x\in\Delta$ and all $s\in S$. For each Coxeter system $(W,S)$ there is (up to isomorphism) a unique thin building of type $(W,S)$. This thin building is called the \textit{Coxeter complex} of $(W,S)$, and has $\Delta=W$ and $\delta=\delta_W$ where $\delta_W(u,v)=u^{-1}v$ for all $u,v\in W$.

A building $(\Delta,\delta)$ is \textit{regular} if 
$$
q_s:=|\{y\in\Delta\mid x\sim_s y\}|\quad\text{is finite and does not depend on $x\in\Delta$}.
$$
For the remainder of this paper we will assume that $(\Delta,\delta)$ is regular. The numbers $(q_s)_{s\in S}$ are called the \textit{thickness parameters} of the building.

For each $x\in\Delta$ and each $w\in W$, let
$$
\Delta_w(x):=\{y\in\Delta\mid \delta(x,y)=w\}\quad\text{be the ``sphere of radius $w$'' centred at $x$.}
$$
 By Parkinson \cite[Proposition~2.1]{P1} the cardinality $q_w=|\Delta_w(x)|$ does not depend on $x\in \Delta$, and is given by
$$
q_w=q_{s_1}\cdots q_{s_{\ell}}\quad\textrm{whenever $w=s_1\cdots s_{\ell}$ is a reduced expression.}
$$

If $(\Delta',\delta')$ is a building with $\Delta'\subseteq \Delta$ and
$\delta|_{\Delta'}=\delta'$ then we call $(\Delta',\delta')$ a
\textit{sub-building} of $(\Delta,\delta)$.

Finally, let us remark that we will typically use the letters $u,v,w$ for elements of a Coxeter group~$W$, and the letters $x,y,z$ for chambers of a building~$(\Delta,\delta)$. 

\subsection{Apartments and retractions}\label{sect:aptret}

Let $(W,S)$ be an irreducible Fuchsian Coxeter system, and let $(\Delta,\delta)$ be a building of type $(W,S)$. The thin sub-buildings of
$(\Delta,\delta)$ of type $(W,S)$ are called the \textit{apartments} of $(\Delta,\delta)$. Thus each apartment is isomorphic to the Coxeter complex of~$(W,S)$. Two key facts concerning apartments are as follows:
\begin{enumerate}
\item[(A1)] If $x,y\in\Delta$ then there is an apartment $A$ containing both $x$ and $y$.
\item[(A2)] If $A$ and $A'$ are apartments containing a common chamber $x$ then there is a unique isomorphism $\theta:A'\to A$ fixing each chamber of the intersection $A\cap A'$.
\end{enumerate}
In fact conditions~(A1) and (A2) can be taken as an alternative, equivalent definition of buildings (see \cite[Definition~4.1]{AB} for the precise statement, and \cite[Theorem~5.91]{AB} for the equivalence of the two axiomatic systems).

Roughly speaking, the properties (A1) and (A2) ensure that the hyperbolic metric on each apartment can be coherently `glued together' to make $(\Delta,\delta)$ a $\mathrm{CAT}(-1)$ space (see \cite[Theorem~18.3.9]{davis} for details). 

Fix, once and for all, an apartment $A_0$ and a chamber $o\in A_0$. Identify $A_0$ with the Coxeter complex of $(W,S)$ (or just with $W$) such that $o$ is identified
with $e$, the neutral element of~$W$. Thus we regard $W=A_0$ as a ``base
apartment'' of~$\Delta$. The \textit{retraction $\rho$ of $\Delta$ onto $W$
  with centre~$o$} is the function $\rho:\Delta\to W$ with
\begin{align}\label{eq:canonicalretraction}
\rho(x):=\delta(o,x).
\end{align}
Alternatively, let $A'$ be any apartment containing $o$ and $x$ (using (A1)) and let $\theta:A'\to W$ be the isomorphism from (A2) fixing $W\cap A'$. Then
$
\rho(x)=\theta(x).
$
Thus $\rho$ ``radially flattens'' the building onto $W$, with centre~$o\in A_0=W$. 

We also note that in the apartment $A_0=W$ the Weyl distance function is given by
$$
\delta(u,v)=u^{-1}v\quad\text{for all $u,v$ in the base apartment~$W$}.
$$

\subsection{Automata for Coxeter Groups}

The notions of \textit{cones}, \textit{cone types}, and \textit{automata} are well established for finitely generated groups, with Epstein, Cannon et al.\cite{cannon} being a standard reference. We briefly recall these notions in our context of Coxeter groups.  

Let $(W,S)$ be a Coxeter system. Let $w\in W$. The \textit{cone of $(W,S)$ with root $w$} is the set
$$
C_W(w):=\{v\in W\mid d(e,v)=d(e,w)+d(w,v)\}.
$$
That is, $C_W(w)$ is the set of all elements $v\in W$ such that there exists a geodesic from $e$ to $v$ passing through $w$. The \textit{cone type} of the cone $C_W(w)$ is
$$
T_W(w):=\{v\in W \mid wv\in C_W(w)\}=w^{-1}C_W(w).
$$

Let $\mathcal{T}(W,S)$ be the set of cone types of $(W,S)$. By Brink and Howlett \cite[Theorem~2.8]{howlett} there are only finitely many cone types in a Coxeter system $(W,S)$, and so $|\mathcal{T}(W,S)|<\infty$. 

\begin{defn}\label{defn:cannon} The \textit{Cannon automaton} of the Coxeter system $(W,S)$ is the directed graph $\mathcal{A}(W,S)$ with vertex set $\mathcal{T}(W,S)$ and with labelled edges defined as follows: there is a directed edge with label $s\in S$ from cone type $\bT$ to cone type $\bT'$ if and only if there exists $w\in W$ such that $\bT=T_W(w)$ and $\bT'=T_W(ws)$ and $d(e,ws)=d(e,w)+1$.
\end{defn}

Let $G=(V_G,E_G,s_\ast)$ be a finite, directed graph with vertex set $V_G$,
edge set $E_G$ and with some distinguished vertex
$s_\ast\in V_G$ together with a labelling $\alpha: E_G\to S$ of the edges.
A \textit{path} in $G$ is a sequence of vertices $[\x_0,\x_1,\dots,\x_m]$,
$m\in\N$, such that there is a directed edge from $\x_{i-1}$ to $\x_i$ for
all $i\in\{1,\dots,m\}$. A \textit{ray} is an infinite path of the form
$[\x_0,\x_1,\dots ]$ such that there is a directed edge from
$\x_{i-1}$ to $\x_i$ for all $i\in\N$. Sometimes we will identify paths or rays
by their set of vertices $\{\x_0,\x_1,\dots\}$.
Denote by $\Paths$ the set of  all finite paths in $G$ starting at $s_*$ and
by $\Paths_\infty$ the set of all rays in $G$ starting at $s_*$.
Each path  of the form
$\gamma=[s_*,\x_1,\ldots, \x_{m}] \in \Paths$,
$\gamma=[s_*,\x_{1},\ldots ]\in\Paths_\infty$ respectively,
gives rise to a path in $W$ starting from $e$. Denote by $\e{1}$ the edge between $s_{*}$ and $\x_{1}$ and by  $\e{i}$ the edge between $\x_{i-1}$ and $\x_{i}$ for $i\geq 2$; the path in $W$ corresponding to $\gamma$ is then defined by 
$$ 
\alpha(\gamma) :=\Bigl[ e, \alpha(\e{1}),\alpha(\e{1})\alpha(\e{2}),\ldots,
\prod_{i=1}^{m} \alpha(\e{i})\Bigr],\quad 
\alpha(\gamma) :=\bigl[ e,
\alpha(\e{1}),\alpha(\e{1})\alpha(\e{2}),\ldots \bigr]\textrm{ respectively.}
$$
For $\gamma\in \Paths$, we define $\alpha^\ast(\g):=\prod_{i=1}^{m}\alpha(\e{i})$.

\begin{defn}
An \textit{automatic structure} for $(W,S)$ is a finite, directed graph
$\cA=(V, E, s_*)$ along with a labelling $\alpha: E\to S$ such that:
\begin{enumerate}
\item No edge in $E$ ends at $s_*$.
\item For $\mathbf{v}\in V\setminus \{s_\ast\}$, there is a path in $\cA$ from $s_*$ to $\mathbf{v}$.
\item For every path $\gamma\in \Paths$ in $\cA$, the path $\alpha(\gamma)$ is a geodesic in $W$.
\item The mapping $\alpha^{*}:\Paths \to W$  is surjective.
\end{enumerate}
The graph $\cA$ is called a \textit{finite state automaton}. We speak of a \emph{strongly automatic structure} if $\alpha^{*}$ defines a bijection between $\Paths$ and $W$.
\end{defn}
It is easy to check that the Cannon automaton $\A(W,S)$, equipped with the natural labelling induced from the labels on the edges, is an automatic structure for $(W,S)$ with $s_\ast=T(e)$.
We may obtain a strongly automatic structure $\mathfrak{A}=\mathfrak{A}(W,S)$
from $\A(W,S)$ by choosing a
lexicographic order of the cone types; see \cite[Section 2.5]{cannon} and Appendix~\ref{app:A} for details. 

The strongly automatic structure $\mfA$ allows us to define \emph{cones}
$C_{\mfA}$ and \emph{cone types} $T_{\mfA}$ as follows. For given $w\in W$ and
a path $\gamma_w\in\Paths$ in $\mfA$ with $\alpha^\ast(\gamma_w)=w$, we write $C_{\mfA}(w)$ for the set of all elements of $W$
which are represented by paths in $\mfA$ with $\gamma_w$ as prefix, that is,
$$
C_\mfA(w):=\{u\in W \mid \exists \gamma\in\Paths :
\alpha^\ast(\gamma)=u,\gamma \text{ goes through }w\}.
$$ 
We call $C_\mfA(w)$ the \textit{cone with respect to~$\mfA$ rooted at $w$}, which is well defined since the path $\gamma_{w}$ is
uniquely determined. We say that $w$ is the \emph{root} of the cone
$C_\mfA(w)$. Moreover, we can define \textit{cone types with respect to~$\mfA$} as $T_\mfA(w):=w^{-1}C_\mfA(w)$. Note that the cones $C(w)$ and
$C_\mfA(w)$ are not necessarily equal. The important property is now that
$C_\mfA(u)\cap C_\mfA(v)=\emptyset$ if $u\neq v$ and $d(e,u)=d(e,v)$. Thus two cones
with respect to~$\mfA$ are
either disjoint or nested in each other ($C_\mfA(u)\subseteq C_\mfA(v)$ or
$C_\mfA(v)\subseteq C_\mfA(u)$). Obviously, we still have only finitely many cone types
with respect to~$\mfA$. From now on, when we speak of cones and cone
types we always think of the cones and types with respect to~the strongly automatic
structure $\mfA$ unless stated otherwise. For sake of brevity we write
$C(w):=C_\mfA(w)$ and $T(w):=T_\mfA(w)$.

A cone type $\bT'$ is \textit{accessible} from the cone type $\bT$ if there is a path from $\bT$ to $\bT'$ in the (directed) graph $\mfA$. In this case we 
write $\bT\to \bT'$. A cone type $\bT$ is called \textit{recurrent} if $\bT\to\bT$, and otherwise it is called \textit{transient}. The set of 
recurrent vertices induces a (directed) subgraph $\mfA_R$ of $\mfA$.
We call the automaton $\mfA$ \textit{strongly connected} if each recurrent cone type is accessible from any other recurrent cone type in the subgraph~$\mfA_R$, that is, the subgraph $\mfA_R$ consists of one connected component. We will sometimes identify $\mfA_R$ with its vertex set.

Recurrence and accessibility of cone types with respect to~the Cannon automaton $\A$ is
defined analogously to $\mfA$. By \cite[Theorem 3.2]{gilch-mueller-parkinson:14}, the subgraph $\A_{\mathcal{R}}$ of $\A$ is strongly connected. Moreover, we have:
\begin{thm}\label{thm:strongly-connected}
Let $(W,S)$ be an irreducible Fuchsian Coxeter system. The strongly automatic structure $\mfA$ for $(W,S)$ is strongly connected.
\end{thm}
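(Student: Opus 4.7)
The plan is to deduce the strong connectedness of $\mfA_R$ from that of $\A_R$, which was established in \cite[Theorem~3.2]{gilch-mueller-parkinson:14}, by analysing the ``lexicographic lift'' of $\A$ to $\mfA$ described in Appendix~\ref{app:A}. Recall that $\mfA$ is obtained from $\A$ by pruning edges so that each $w\in W$ corresponds to a unique directed path from $s_\ast$, and a cone type with respect to $\mfA$ refines a cone type with respect to $\A$. This gives a natural projection $\pi:V(\mfA)\to V(\A)$ which preserves edges and their labels, so every directed cycle in $\mfA$ projects to a directed cycle in $\A$; hence $\pi$ maps $\mfA_R$ into $\A_R$.

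Given two recurrent cone types $\bT,\bT'\in\mfA_R$, I would separately establish (i) accessibility in $\mfA_R$ between different fibers of $\pi$, and (ii) accessibility in $\mfA_R$ within a single fiber of $\pi$. For (i) one uses the strong connectedness of $\A_R$: choose a witness $w\in W$ with $T_\mfA(w)=\bT$ such that the geodesic extension prescribed by an $\A_R$-path from $\pi(\bT)$ to $\pi(\bT')$ coincides at every step with the $\mfA$-chosen continuation. Such a~$w$ exists because the recurrent cone type~$\bT$ is realised by infinitely many elements of $W$, spread throughout the hyperbolic tessellation. For (ii), iterating any loop of $\A_R$ at $\pi(\bT)$ produces arbitrarily long sequences of $\mfA$-cone types all projecting to $\pi(\bT)$, and by the pigeonhole principle together with the finiteness of $V(\mfA)$, the set of $\mfA$-types visited along a sufficiently long such extension includes every recurrent $\mfA$-preimage of $\pi(\bT)$, in particular $\bT'$.

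The main obstacle is (i): matching an $\A$-accessibility with an $\mfA$-geodesic extension of a chosen witness. The subtlety is that the lexicographic rule defining $\mfA$ may, at a given step, prefer a different generator than the one prescribed by the target $\A$-path, so the lift is not automatic. The key input is the geometric flexibility afforded by the Fuchsian hypothesis: because~$W$ acts properly and cocompactly on $\mathbb{H}^2$ with a polygonal fundamental domain, a recurrent cone type admits many realisations whose local geodesic extensions can be prescribed essentially at will. Combining this flexibility with the strong connectedness of $\A_R$, one prepends an ``adjustment prefix'' to the witness~$w$ so that the prescribed $\A_R$-continuation aligns with the lexicographic choice at every subsequent step, thereby producing a genuine path in $\mfA_R$ from~$\bT$ to~$\bT'$. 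I expect this matching argument—essentially a controlled translation of the Cannon-automaton proof through the lexicographic ordering—to be the technical heart of the proof, whereas step (ii) and the projection argument are routine consequences of finiteness and the definition of recurrence.
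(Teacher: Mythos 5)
Your proposal takes a genuinely different route from the paper, but it has gaps that I do not see how to close, so as written it is not a proof.

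The paper's proof is an explicit case analysis: the Fuchsian Coxeter systems are divided into four classes according to the Coxeter matrix, the normal form automaton $\mfA$ is constructed from the Cannon automaton $\A$ by deleting a specific, explicitly listed set of arrows in each class, and strong connectedness of $\mfA_R$ is then verified by exhibiting explicit cycles through all recurrent cone types. You instead try to reduce the statement to the already-known strong connectedness of $\A_R$ via a lifting argument through a projection $\pi:V(\mfA)\to V(\A)$. Two concrete problems arise.

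First, in the construction used here $\mfA$ has the \emph{same} vertex set as $\A$ (the Cannon cone types), with a proper subset of the edges, so $\pi$ is the identity and your step~(ii) is vacuous. Even under your reading (where $V(\mfA)$ refines $V(\A)$), the pigeonhole argument in step~(ii) only yields that \emph{some} state is repeated along a long walk, not that \emph{every} recurrent state in a fiber is visited; two disjoint cycles in $\mfA_R$ could lie over the same $\A$-loop, and iterating that loop from one of them would never reach the other.

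Second, and more seriously, step~(i) — the ``lexicographic lift'' — is asserted, not proved, and the geometric-flexibility heuristic cannot repair it. Whether a word $s_1\cdots s_m$ extends a normal form to a normal form is not adjustable by prepending a prefix. Concretely, the arrows deleted in passing from $\A$ to $\mfA$ are exactly those that complete a braid relation the ``wrong'' way, e.g.\ in Class~I the arrow from $\underbrace{tst\cdots}_{a-1}$ to $\underbrace{tst\cdots}_{a}$: any extension in $W$ realizing this $\A$-edge contains a subword $(ts)(ts)\cdots$ of length $a=m_{st}$ that can be rewritten as $(st)(st)\cdots$, and is therefore \emph{never} lexicographically minimal, regardless of the prefix $w$. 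So if the chosen $\A_R$-path from $\pi(\bT)$ to $\pi(\bT')$ uses a deleted arrow, no ``adjustment prefix'' produces an $\mfA$-path; you would need to find an alternative $\A_R$-path avoiding every deleted arrow, and the existence of such an alternative is essentially the statement to be proved. You flag this matching step as ``the technical heart'' but leave it as an expectation; in the paper this heart \emph{is} the entire proof, carried out by explicitly determining which arrows are removed in each of the four classes and then writing down cycles that route around them. A reduction from $\A_R$ to $\mfA_R$ along the lines you sketch would require, at minimum, a lemma saying that the deleted arrows never disconnect $\mfA_R$, and that lemma is not supplied.
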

\begin{proof}
We provide the proof in Appendix~\ref{app:A}. 
\end{proof}
A first consequence of this  theorem is that there is some $K\in\mathbb{N}$ such that
$W\setminus \cB_K(e)$ contains only elements of recurrent cone types.
\par
We give further useful definitions related to cones and cone types.
The (inner) \textit{boundary} of a cone $C(w)$ of $(W,S)$ is
$$
\partial C(w):=\{u\in C(w)\mid \exists v\in W\setminus C(w): d(u,v)=1\}.
$$
More generally, if $L\geq 1$, the \textit{$L$-boundary} of a cone $C(w)$ of $(W,S)$ is defined to be
$$
\partial_L C(w):=\{u\in C(w)\mid \exists v\in W\setminus C(w): d(u,v)\leq L\}.
$$
We call $\mathrm{Int}_{L} C(w):=C(w)\setminus \partial_{L} C(w)$ the
\textit{$L$-interior} of $C(w)$. See also Figure \ref{fig:def:con}. The \textit{$L$-boundary of a cone type}
$\bT(w)$ of $(W,S)$ with respect to~$\mfA$ is defined  by
$$
\partial_L\bT(w):=\{u\in\bT(w)\mid \exists v\in W\setminus\bT(w): d(u,v)\leq L\},
$$
and the \textit{$L$-interior of the cone type} $\bT(w)$ is $\mathrm{Int}_L
\bT(w):=\bT(w)\setminus \partial_L\bT(w)$.
\begin{figure}
\centering
\hspace{0.5cm}
\subfigure{
\begin{tikzpicture}[scale=2]
  \begin{scope}
  \draw (0,0) arc (-70:-40:5);
  \end{scope}
  \begin{scope}[yscale=-1]
  \draw (0,0) arc (-70:-40:5);
  \end{scope}
  \fill(0,0) circle[radius=0.8pt, fill=black];
  \draw(0,0)node[left]{$w$};
  
  \tikzstyle{ann} = [fill=white,inner sep=1pt]
 \draw[arrows=<->](1.65,0.99)--(1.95,0.72);
 \node[ann] at (1.8,0.85) {$L$};
   \begin{scope}[xshift=1cm, dashed]
  \draw (0,0) arc (-60:-35:5);
  \end{scope}
  \begin{scope}[yscale=-1, xshift=1cm, dashed]
  \draw (0,0) arc (-60:-35:5);
   \draw(1.25,0)node[]{$\mathrm{Int}_{L} C(w)$};
  \end{scope}
 
 \draw[arrows=->](0.75,1)--(1.5,0.6);
 \node[ann] at (0.75,1) {$\partial_{L}C(w)$};
 
 \draw[arrows=->](0.75,-1)--(1.12,-0.58);
 \draw[arrows=->](0.75,-1)--(1.12,0.58);
 \node[ann] at (0.75,-1) {$\partial C(w)$};
 
\filldraw[fill=white, draw=white] (0,1.45) rectangle (3,1.5);
\filldraw[fill=white, draw=white] (0,-1.45) rectangle (3,-1.5);
 \end{tikzpicture} }
 \hfill
 \subfigure{
 \begin{tikzpicture}[scale=2]
  \begin{scope}
  \draw (0,0) arc (-70:-40:5);
  \end{scope}
  \begin{scope}[yscale=-1]
  \draw (0,0) arc (-70:-40:5);
  \end{scope}
  \draw(0,0)node[left]{$w$};
   \fill(0,0) circle[radius=0.8pt, fill=black];
  \tikzstyle{ann} = [fill=white,font=\footnotesize,inner sep=1pt]
 \draw[arrows=<->](1.65,0.99)--(1.95,0.72);
 \node[ann] at (1.8,0.85) {$3L_{1}$};
   \begin{scope}[xshift=1cm, dashed]
  \draw (0,0) arc (-60:-35:5);
  \end{scope}
  \begin{scope}[yscale=-1, xshift=1cm, dashed]
  \draw (0,0) arc (-60:-35:5);
   \draw(1.25,0)node[]{$\mathrm{Int}_{3L_1} C(w)$};
  \end{scope}
 
  \begin{scope}[xshift=1.3cm, dotted]
  \draw (0,0) arc (-58:-34:5);
  \end{scope}
  \begin{scope}[yscale=-1, xshift=1.3cm, dotted]
  \draw (0,0) arc (-58:-34:5);
  \end{scope}
  \draw[arrows=->](0.75,-1)--(2.1,-0.7);
 \node[ann] at (0.75,-1) {$\partial_{L_{0}}\mathrm{Int}_{3L_1} C(w)$};
 
 \begin{scope}[xshift=0.33cm, dotted]
  \draw (0,0) arc (-67:-38.5:5);
  \end{scope}
  \begin{scope}[yscale=-1, xshift=0.33cm, dotted]
  \draw (0,0) arc (-67:-38.5:5);
  \end{scope}
  \draw[arrows=->](0.75,1)--(1.45,0.7);
 \node[ann] at (0.75,1) {$\partial_{L_{1}}C(w)$};
 
\filldraw[fill=white, draw=white] (0,1.45) rectangle (3,1.5);
\filldraw[fill=white, draw=white] (0,-1.45) rectangle (3,-1.5);
 \end{tikzpicture} 
 }
 \hspace{0.5cm}
 \caption{Illustrations of the cone $C(w)$ and its different boundaries.}
 \label{fig:def:con}
\end{figure}

Using planarity of the Cayley graph of $(W,S)$ we have the following important description for the boundary of a cone $C(w)$,
see Ha\"issinsky, Mathieu and M\"uller \cite[Lemma 2.4]{HMM:13} and compare with \cite[Lemma 5.1]{gilch-mueller-parkinson:14}.
\begin{lemma}\label{lem:boundary-rays}
Let $u\in W\setminus \{e\}$. Then the boundary $\partial C(u)$ is
contained in the union of two geodesic rays in $W$ starting at~$u$ which can
also be described
by rays in $\mfA$.
\end{lemma}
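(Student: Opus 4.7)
The plan is to exploit the planarity of the Cayley graph of the Fuchsian Coxeter system $(W,S)$. Since $W$ acts properly and cocompactly on $\mathbb{H}^2$ with fundamental polygon $F$, the chamber/dual graph of the tessellation realises the Cayley graph of $(W,S)$ as a planar graph embedded in the hyperbolic disc. With respect to this embedding, each cone $C(u)=C_\mfA(u)$ occupies a connected region of the disc, and the lemma asserts that the combinatorial frontier of this region, viewed from $u$, is captured by two geodesic rays that can moreover be read off from the automaton $\mfA$.

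First I would fix $u\in W\setminus\{e\}$ and consider all finite and infinite paths in $\mfA$ which pass through $u$. By the strong automaticity of $\mfA$ these paths are in bijection with the elements of $C(u)$, and their images under $\alpha^\ast$ are geodesics in $W$. In the planar picture the union of these paths covers the region $C(u)$. Using the cyclic order of $S$-neighbours around each vertex inherited from the hyperbolic embedding, I would identify, among the rays in $\mfA$ that pass through $u$, the two extremal ones $\gamma_L$ and $\gamma_R$ which bound $C(u)$ on its left and right. By property~(3) of an automatic structure, $\alpha(\gamma_L)$ and $\alpha(\gamma_R)$ are geodesic rays in $W$; they start at $e$ and pass through $u$, and truncating them at $u$ gives the two rays required by the lemma.

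Next, to show $\partial C(u)\subseteq \gamma_L\cup\gamma_R$, I would take $v\in\partial C(u)$ together with a neighbour $vs\in W\setminus C(u)$. Translated to the planar picture, the edge $\{v,vs\}$ exits the region occupied by $C(u)$, and since $C(u)$ is a simply connected planar region whose topological frontier is $\gamma_L\cup\gamma_R$, any such exit edge must emanate from a vertex on that frontier. This forces $v\in \gamma_L\cup\gamma_R$.

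The main technical obstacle is the discrepancy between the geometric cone $\{v\in W\mid d(e,v)=d(e,u)+d(u,v)\}$ and the automaton cone $C_\mfA(u)$: the lexicographic choice entering the definition of $\mfA$ may split a geometric cone into several automaton cones, and one must verify that each such piece is still a simply connected planar region with apex $u$ whose frontier consists of exactly two geodesic rays. This is where the combinatorics of the strongly automatic structure from Appendix~\ref{app:A}, together with the nesting/disjointness property of the cones $C_\mfA(\cdot)$, become essential. Once these facts are in place the planarity argument above goes through verbatim, which is essentially the content of \cite[Lemma~2.4]{HMM:13} and \cite[Lemma~5.1]{gilch-mueller-parkinson:14} which one can directly invoke or adapt.
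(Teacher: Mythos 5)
Your approach aligns with the paper's: the paper gives no proof for this lemma and simply cites \cite[Lemma~2.4]{HMM:13} and \cite[Lemma~5.1]{gilch-mueller-parkinson:14}, which rest on exactly the planarity argument you sketch. You also correctly flag the one adaptation that actually has to be made --- verifying that the $\mfA$-cone $C_\mfA(u)$ (and not only the geometric cone $C_W(u)$ or the Cannon cone) is a simply connected planar region bounded by two rays readable from $\mfA$ --- which is the detail the paper silently delegates to those citations.
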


We  recall the well-known crucial fact that geodesics in a hyperbolic group
either stay within bounded distance of each other or diverge
exponentially.  More precisely, there exists some exponential divergence
function $e: \NN_0 \rightarrow \RR$ such that the following holds.  Let $u,v_{1},v_{2}\in W$ and 
$\gamma_{1}$ a geodesic from $u$ to  $v_{1}$ and $\gamma_{2}$ a geodesic from $u$ to $v_2$. We denote by $\gamma_i(n)$  the point on $\gamma_i$ at distance $n\in\NN_0$ to $u$.  Then, for  all $r,R\in \NN_0$ with $R+r\leq \min\{d(u,v_{1}), d(u,v_{2})\}$
and $d(\gamma_{1}(R), \gamma_{2}(R))\geq e(0)$, every path $\eta$ starting in $\gamma_{1}(R+r)$, visiting only vertices in $W\setminus
\cB_{R+r}(u)$ and ending in $\gamma_{2}(R+r)$ has length of
at least $e(r)$, see also Figure \ref{fig:exp div}. In particular, two geodesics that have been at least $e(0)$ apart can never
intersect again. 
\begin{figure}
\begin{center}
\begin{tikzpicture}[scale=2]
  \tikzstyle{ann} = [fill=white,font=\footnotesize,inner sep=1pt]
  \begin{scope}
  \draw (0,0) arc (-70:-40:5);
  \end{scope}
  \begin{scope}[yscale=-1]
  \draw (0,0) arc (-70:-40:5);
  \end{scope}
  \fill(0,0) circle[radius=0.8pt, fill=black];
  \fill(2.1,1.47) circle[radius=0.8pt, fill=black];
\fill(2.1,-1.47) circle[radius=0.8pt, fill=black];
  \draw(0,0)node[left]{$u$};
  \draw(2.1,1.47)node[label=right:$v_{1}$]{};
  \draw(2.1,-1.47)node[label=right:$v_{2}$]{};
\draw (0,0) ++ (-40:1.75cm) arc (-40:40:1.75cm);
\draw[<->]  (0.7,0.28) --(0.7,-0.28);
 \node[ann] at (0.7,0) {$> e(0)$};
 \fill(0.7,0.315) circle[radius=0.8pt, fill=black];
 \fill(0.7,-0.315) circle[radius=0.8pt, fill=black];
\draw(0.9,-1.2)node[right]{$B_{R+r}(u)$};
\draw(0.65,0.34)node[above]{$\gamma_{1}(R)$};
\draw(0.65,-0.34)node[below]{$\gamma_{2}(R)$};
\fill(1.52,0.88) circle[radius=0.8pt, fill=black];
\fill(1.52,-0.88) circle[radius=0.8pt, fill=black];
\draw(1.52,0.88)node[right]{$\gamma_{1}(R+r)$};
\draw(1.52,-0.88)node[right]{$\gamma_{2}(R+r)$};
\draw plot [smooth, tension=1] coordinates { (1.52,0.88) (1.8,0.6) (2.1,0.2) (1.8,-0.2)  (2,-0.55)(1.52,-0.88)};
\draw(2.1,0.2) node[right]{$\eta$};
\end{tikzpicture}
\end{center}
\caption{Geodesics diverge exponentially: length of path $\eta$ is bigger than $e(r)$.}
\label{fig:exp div}
\end{figure}
Furthermore, for each cone of recurrent type the two geodesics which describe
its boundary diverge exponentially. This follows from exponential growth of $(W,S)$ which yields that $\mfA_R$ is not a circle.

\subsection{Isotropic random walks on regular buildings}

We will henceforth write $(\Delta,\delta)$ for a thick regular building of type
$(W,S)$, where $(W,S)$ is an irreducible Fuchsian Coxeter system. A (time-homogeneous) random walk $(X_n)_{n\geq 0}$ on the set $\Delta$ of chambers of the
building $(\Delta,\delta)$ is \textit{isotropic} if the single-step transition probabilities \mbox{$p(x,y):=\mathbb{P}[X_{n+1}=y\mid X_n=x]$} of the walk satisfy
$$
p(x,y)=p(x',y')\quad\textrm{whenever $\delta(x,y)=\delta(x',y')$}. 
$$
In other words, the probability of jumping from chamber $x$ to chamber $y$ in one step depends only on the Weyl distance $\delta(x,y)$. Thus an isotropic random walk is determined by the probabilities
$$
p_w:=\mathbb{P}[X_1\in \Delta_w(o)\mid X_0=o],\quad\textrm{so that}\quad p(x,y)=p_w/q_{w}\quad\textrm{if $\delta(x,y)=w$}.
$$
We will typically stipulate $X_0=o$ (the fixed base chamber), and we will always assume that $(X_n)_{n\geq 0}$ is irreducible. Since $\Delta$ is thick, by \cite[Lemma 4.2]{gilch-mueller-parkinson:14} the irreducible isotropic random walk $(X_n)_{n\geq 0}$ on $\Delta$ is necessarily aperiodic. 

For each $n\geq 0$ the $n$-step transition probabilities are denoted by
$$
p^{(n)}(x,y):=\mathbb{P}[X_n=y\mid X_0=x] \textrm{ and } p_w^{(n)}:=\mathbb{P}[X_n\in \Delta_w(o)\mid X_0=o].
$$
Then 
$p^{(n)}(x,y)=p^{(n)}_w/q_w$ whenever $\delta(x,y)=w$. For $x,y\in\Delta$, the \textit{Green function} is then given by
$$
G(x,y):=\sum_{n\geq 0} p^{(n)}(x,y).
$$
By irreducibility, the spectral radius of~$(X_n)_{n\geq 0}$ is given by
$$
\varrho(P):=\limsup_{n\to\infty}p^{(n)}(x,y),
$$
and this value does not depend on the pair $x,y\in\Delta$, and
by \cite[Corollary 4.9]{gilch-mueller-parkinson:14} we have
$\varrho(P)<1$. This implies transience of $(X_n)_{n\geq 0}$.

We will assume that $(X_n)_{n\geq 0}$ has \textit{bounded
range}, that is, there is a minimal number $L_0\geq 0$ such that 
\begin{equation}
\label{eq:L0}\text{$p_w\neq 0$ implies that $\ell(w)\leq L_0$}.
\end{equation}
We set $\varepsilon_0:=\min\{ p_w/q_w \mid w\in W, p_w>0\}$. Thus $\varepsilon_0>0$.

Let $\pi_n$ be the distribution of $X_n$.
The \textit{asymptotic entropy} of the random walk $(X_n)_{n\geq 0}$ is given by
$$
h := \lim_{n\to\infty} -\frac1n \mathbb{E}[\log \pi_n(X_n)],
$$
if the limit exists. The main goal of this article is to prove that this limit
exists, and to give formulae for it.

It is sufficient to prove our results under the assumption that $p_s>0$ for all $s\in S$. To see this, note that by \cite[Lemma 4.3.2]{gilch-mueller-parkinson:14} there is an $M\geq 1$ such
that the $M$-step walk $(X_{nM})_{n\geq 0}$ satisfies $p_s^{(M)}>0$ for all
$s\in S$, and by the bounded range assumption proving existence of the limit
$\lim_{n\to\infty}-\frac{1}{nM}\mathbb{E}\bigl[\log \pi_{nM}(X_{nM})\bigr]$ 
 implies that $-\frac1n\mathbb{E}\bigl[\log
 \pi_{n}(X_{n})\bigr]$ converges to the same limit due to
$$
\pi_{nM}(X_{nM}) \cdot \varepsilon_0^{k} \leq \pi_{nM+k}(X_{nM+k}) \leq
\frac{1}{\varepsilon_0^{M-k}}\pi_{(n+1)M}(X_{(n+1)M}) \quad \textrm{a.s. for } k\in\{0,1,\dots,M\}.
$$
Thus, without loss of generality we will assume throughout the paper due to (\ref{eq:L0}) that 
\begin{align}
\label{eq:support}\text{$p(x,y)\geq \varepsilon_0>0$ whenever $d(x,y)=1$}.
\end{align}
 
\par
Another fundamental statistic of the random walk $(X_n)_{n\geq 0}$ is the \textit{rate of escape} or \textit{drift} which is given by 
\begin{equation}\label{equ:speed}
\mathtt{v}:=\lim_{n\to\infty} \frac{d(o,X_n)}{n}.
\end{equation}
This limit exists almost surely and is constant, see \cite[Theorem
1.1]{gilch-mueller-parkinson:14}. The \textit{rate of escape with respect to the Green distance} is given by
$$
\lim_{n\to\infty} -\frac1n \log G(o,X_n),
$$
if the limit exists. In Corollary~\ref{cor:buildinggreen} we show that this limit exists and equals the asymptotic entropy $h$.

\subsection{Retracted walk}

The main tool for our proof of existence of the asymptotic entropy of the
random walk $(X_n)_{n \geq 0}$ on $(\Delta,\delta)$  is to look at the image
$\overline{X}_n:=\rho(X_n)$ of the random walk under the retraction
\mbox{$\rho:\Delta\to W$.} In \cite[Proposition 4.5]{gilch-mueller-parkinson:14} we
have shown that the stochastic process $(\overline{X}_n)_{n\geq 0}$ on $W$ is
in fact a random walk on~$W$, which we call the \textit{retracted
  walk}. We denote analogously the single-step transition probabilities by
$\overline{p}(u,v)$ and the $n$-step transition probabilities by
$\overline{p}^{(n)}(u,v)$, and we will use the notation $\mathbb{P}_u[\,\cdot\,] := \mathbb{P}[\,\cdot\,|\,
\overline{X}_0=u]$. However we note that the retracted walk is not $W$-invariant. That is,
$\overline{p}(wu,wv)\neq \overline{p}(u,v)$ in general. But we have the following weaker invariance property which roughly says that the transition probabilities of the retracted walk in two cones of the same type are the same:
\begin{prop}\label{prop:invariance}
Let $\bT$ be a cone type of $(W,S)$ with respect to~the strongly automatic structure
$\mfA$ and $w_{1},w_{2}\in W$ with $T(w_1)=T(w_2)=\bT$.
Then
$$
\overline{p}(w_1u,w_1v)=\overline{p}(w_2u,w_2v)\quad\textrm{for all $u\in \bT$ and all $v\in \mathrm{Int}_{L_0} \bT=\bT\setminus\partial_{L_0}\bT$}.
$$
\end{prop}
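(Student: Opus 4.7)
The plan is to reduce the single-step transition probability to a weighted count via isotropy, and then to show that each individual count is determined solely by the cone type, leveraging the $L_0$-interior condition on $v$.

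By isotropy of $(X_n)_{n \geq 0}$, for any fixed chamber $x_i \in \Delta_{w_i u}(o)$ we have
\[
\overline{p}(w_i u, w_i v) \;=\; \sum_{g \in W :\, p_g > 0} \frac{p_g}{q_g}\, N_i(g), \qquad N_i(g) := \bigl|\{y \in \Delta_{w_i v}(o) : \delta(x_i, y) = g\}\bigr|,
\]
and by \cite[Proposition~4.5]{gilch-mueller-parkinson:14} the total is independent of the choice of $x_i$. Hence it suffices to show $N_1(g) = N_2(g)$ for every $g$ with $\ell(g) \leq L_0$.

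To exploit the cone-type hypothesis, I would choose $x_i$ to lie in an apartment $A_i$ containing $o$ (axiom~(A1)) and identify $A_i$ with $W$. Since $w_i u \in C(w_i)$ forces $\ell(w_i u) = \ell(w_i) + \ell(u)$, there is a unique chamber $z_i \in A_i$ with $\delta(o, z_i) = w_i$ on the minimal gallery from $o$ to $x_i$ inside $A_i$; moreover $\delta(z_i, x_i) = u$. The central local rigidity claim is: for any $y \in \Delta$ with $\delta(x_i, y) = g$ and $\ell(g) \leq L_0$,
\[
\delta(o, y) = w_i v \iff \delta(z_i, y) = v.
\]
The ``$\Leftarrow$'' direction is straightforward from iterating axiom~(B2) along a minimal gallery from $z_i$ to $y$, together with $\ell(w_i v) = \ell(w_i) + \ell(v)$. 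For ``$\Rightarrow$'', the hypothesis $v \in \mathrm{Int}_{L_0}\bT$ is essential: it prevents any minimal gallery from $o$ to $y$ from leaving $\rho^{-1}(C(w_i))$ within $L_0$ steps of $y$, and combined with Lemma~\ref{lem:boundary-rays} (describing cone boundaries via two geodesic rays) and the CAT($-1$) geometry of the Fuchsian building, this forces $z_i$ itself to lie on a minimal gallery from $o$ to $y$.

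Once the equivalence is established, $N_i(g) = |\{y \in \Delta : \delta(z_i, y) = v,\, \delta(x_i, y) = g\}|$, a count depending only on $u = \delta(z_i, x_i)$, $v$, $g$, and the local building structure around $z_i$. Regularity of the building (constant thickness $(q_s)_{s \in S}$) together with the definition of cone type ensures this count depends only on $u$, $v$, $g$, and $\bT$, yielding $N_1(g) = N_2(g)$. The main obstacle will be the ``$\Rightarrow$'' direction of the rigidity claim: a priori, several chambers of Weyl distance $w_i$ from $o$ may lie on different minimal galleries to $y$, and pinning down that our specific $z_i$ is among them is the technical heart of the argument, requiring the $L_0$-interior condition in combination with the hyperbolic/CAT($-1$) structure of the Fuchsian building.
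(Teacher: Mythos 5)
The paper's own proof of this proposition is a two-line citation: it invokes \cite[Proposition~4.7]{gilch-mueller-parkinson:14} (the analogous statement for Cannon-automaton cones) together with the containments $C_\mfA(w)\subseteq C_W(w)$ and $\mathrm{Int}_{L_0}C_\mfA(w)\subseteq\mathrm{Int}_{L_0}C_W(w)$, so what you are attempting is a from-scratch argument. Your opening reduction is sound: by isotropy and \cite[Proposition~4.5]{gilch-mueller-parkinson:14} it suffices to show $N_1(g)=N_2(g)$ for every $g$ with $\ell(g)\leq L_0$. However the proof has a genuine gap precisely at the step you flag as the ``technical heart'': the ``$\Rightarrow$'' direction of the rigidity claim is asserted rather than proved, and the tools you reach for do not obviously supply it. From $\cB_{L_0}(w_iv)\subseteq C(w_i)$ you can indeed conclude that every minimal gallery from $o$ to $y$ retracts to a geodesic in $W$ whose last $L_0$ steps lie in $C(w_i)$, but this controls only the $\rho$-image: there are $q_{w_i}>1$ chambers in $\Delta_{w_i}(o)$, and nothing in that observation forces the specific chamber $z_i$ (selected using $x_i$, not $y$) to lie on a minimal gallery from $o$ to $y$. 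The statement $\delta(z_i,y)=v$ is a sharp combinatorial fact about galleries in the thick building; $\mathrm{CAT}(-1)$ thin-triangle estimates can only place $z_i$ in a bounded neighbourhood of the chamber at position $w_i$ along a geodesic $[o,y]$, never pin down the chamber itself, and Lemma~\ref{lem:boundary-rays} lives in the thin Coxeter complex $W$, not in $\Delta$, so it addresses a different object.

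A cleaner route, and presumably the one behind the cited proposition, is to observe that $N_i(g)$ is a Hecke-algebra structure constant of the regular building: with $s_1\cdots s_m$ a reduced word for $g^{-1}$ ($m\leq L_0$), the quantity $N_i(g)$ is the coefficient of $T_{w_iu}$ in $T_{w_iv}T_{s_1}\cdots T_{s_m}$, computed via $T_cT_s=T_{cs}$ when $\ell(cs)>\ell(c)$ and $T_cT_s=q_sT_{cs}+(q_s-1)T_c$ otherwise. Every group element $w_ic'$ appearing after $j<m\leq L_0$ multiplications satisfies $d(c',v)\leq L_0-1$, hence $c'\in\mathrm{Int}_1\bT$ and $\ell(w_ic's)-\ell(w_ic')=\ell(c's)-\ell(c')$ for all $s\in S$; the recursion is therefore term-by-term identical for $w_1$ and $w_2$, giving $N_1(g)=N_2(g)$ directly, \emph{without} any set-level rigidity. (A posteriori, combining this cardinality equality with the ``$\Leftarrow$'' inclusion you did prove would also yield your rigidity claim, but that derivation runs in the opposite direction from your proposal.) The $L_0$-interior hypothesis is thus doing combinatorial bookkeeping inside the cone, not hyperbolic divergence control.
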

\begin{proof} 
The proof follows directly from \cite[Proposition
4.7]{gilch-mueller-parkinson:14}, where these equations were shown for the
cones with respect to~the Cannon automaton $\A$; the equations also hold for the cones with respect to~the strongly automatic structure $\mfA$ since $C_\mfA(w)\subseteq C_W(w)$ and also $\mathrm{Int}_{L_0} C_\mfA(w) \subseteq \mathrm{Int}_{L_0} C_W(w)$. 
\end{proof}

The following important property is proven analogously to \cite[Lemma 5.8]{gilch-mueller-parkinson:14} by noting that there are only finitely many cone types.
\begin{lemma}\label{lem:stayincone}
There exists a
constant $\overline{p}_{esc}>0$ such that for all 
$w\in W$  
$$
\PP_{w}[\overline{X}_n\in C(w)\text{ for all $n\geq 0$}]\ge\overline{p}_{esc}.
$$
\end{lemma}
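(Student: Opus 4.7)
The plan is to lift the analogous statement from the companion paper~\cite{gilch-mueller-parkinson:14}, which was established for cones with respect to~the Cannon automaton $\A$, to cones with respect to~the strongly automatic structure $\mfA$, by exploiting finiteness of the set of cone types (Theorem~\ref{thm:strongly-connected}) together with the weak invariance given by Proposition~\ref{prop:invariance}.

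First I reduce to the case where $w$ has recurrent cone type. By Theorem~\ref{thm:strongly-connected} and the consequence recorded immediately after it, there is a fixed $K\in\NN$ such that every element of $W\setminus\cB_K(e)$ has recurrent cone type. Moreover, from any transient cone type there is a path in $\mfA$ of length at most a bounded constant into a recurrent cone type. Hence for every $w\in W$ there exists $w'\in C(w)$ at uniformly bounded distance from $w$ with $T(w')$ recurrent. Combining assumption~(\ref{eq:support}), which gives $\overline{p}(u,v)\geq \varepsilon_0$ for neighbours, with the bounded range assumption~(\ref{eq:L0}) and finiteness of the set of transient cone types, the retracted walk started at $w$ reaches such a $w'$ while staying inside $C(w)$ with probability at least some uniform constant $p_1>0$.

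Second, for a recurrent cone type I argue exactly as in~\cite[Lemma~5.8]{gilch-mueller-parkinson:14}. By Lemma~\ref{lem:boundary-rays} the boundary $\partial C(w)$ lies on two geodesic rays from $w$, and for recurrent cone types these rays diverge exponentially. Combining this exponential divergence with the bounded-range assumption, the probability that the retracted walk started deep inside $\mathrm{Int}_{L}C(w)$ (for $L$ large enough, of the order of the exponential divergence threshold $e(0)$) ever reaches $\partial_{L_0} C(w)$ is bounded above by a geometric series whose sum is strictly less than $1$. Proposition~\ref{prop:invariance} is essential here: it ensures that transitions within the $L_0$-interior of $C(w)$ depend only on the cone type $T(w)$, so the upper bound on exit probability can be taken uniformly across all roots of a given cone type. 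Finiteness of the set of recurrent cone types then yields a uniform lower bound $p_2>0$ on never exiting $C(w)$, starting from some deep interior point.

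The main obstacle is the second step, namely the uniform positive lower bound on the probability of staying forever inside $C(w)$ for a root $w$ of recurrent cone type. This is where exponential divergence of the bounding geodesics becomes crucial: a walk of bounded range $L_0$ would need to traverse exponentially long paths in the interior of $C(w)$ to escape across the widening gap between the two boundary rays, and such excursions have geometrically small probability in the starting depth. Setting $\overline{p}_{esc}:=p_1 p_2>0$ combines the two steps and establishes the lemma for all $w\in W$.
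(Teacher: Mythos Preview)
Your proposal is correct and follows essentially the same approach as the paper, which simply cites \cite[Lemma~5.8]{gilch-mueller-parkinson:14} together with finiteness of the set of cone types. You have spelled out the ingredients of that argument in more detail---the reduction to recurrent types, exponential divergence of the two boundary geodesics from Lemma~\ref{lem:boundary-rays}, and the invariance of Proposition~\ref{prop:invariance} giving uniformity over all roots of a fixed type---which is exactly what the companion paper's lemma establishes for Cannon cones and which transfers here to $\mfA$-cones.
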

Let $\overline{P}:=\bigl(\overline{p}(u,v)\bigr)_{u,v\in W}$ be the transition
operator of the retracted walk, and let
$\varrho(\overline{P}):=\limsup_{n\to\infty} \overline{p}^{(n)}(u,v)^{1/n}$ be the
spectral radius of $\overline{P}$, which is independent of the specific choice
of $u$ and $v$. Then $\varrho(\overline{P})=\varrho(P)<1$ due to
\cite[Proposition 4.6]{gilch-mueller-parkinson:14}. Since
$d(o,X_n)=\ell(\delta(o,X_n))=\ell(\rho(X_n))=d(e,\overline{X}_n)$, we also have
$\mathtt{v}=\lim_{n\to\infty} \frac1n \ell(\overline{X}_n)$, with $\mathtt{v}$ as in (\ref{equ:speed}). 
\par
Analogously to the random walk $(X_n)_{n\geq 0}$ on $(\Delta,\delta)$ let 
$\overline{\pi}_n$ be the distribution of $\overline{X}_n$. The \textit{asymptotic
  entropy} of the retracted walk $(\overline{X}_n)_{n\geq 0}$ is then given by
$$
\overline{h} := \lim_{n\to\infty} -\frac1n \mathbb{E}[\log \overline{\pi}_n(\overline{X}_n)],
$$
if the limit exists.


\section{Asymptotic entropy of the retracted walk}
\label{sec:entropy-retracted-walk}

In this section we derive a formula for the asymptotic entropy of the retracted
walk (see Theorem~\ref{thm:L-limit} together with (\ref{equ:h_G})). We begin by introducing some generating functions and notation in
Subsection \ref{sec:genfun}. In Subsection \ref{sec:cone-coverings} we will construct nested
sequences of coverings of $W$ by cones such that we can track the retracted
random walk's way to infinity. In Subsections \ref{sec:entropy-HMC}, \ref{sec:entropy-retracted} and \ref{subsec:entropy-retracted-walk} we will use
this construction of coverings by cones in order to deduce existence and
formulae for the asymptotic entropy of the retracted walk $(\overline{X}_n)_{n\geq 0}$.

\subsection{Generating functions}

\label{sec:genfun}

In this section we define some useful generating functions and some related notation.
For $u,v\in W$, $z\in\mathbb{C}$, the \textit{Green function} of the retracted
walk is defined as 
$$
\overline{G}(u,v|z) := \sum_{n\geq 0} \overline{p}^{(n)}(u,v)\cdot z^n
$$
and the \textit{last visit generating function} is given by
$$
\overline{L}(u,v|z) := \sum_{n\geq 0}\Prob_u\bigl[\overline{X}_n=v,\forall m\in\{1,\dots,n\}:
\overline{X}_m\neq u \Bigr]\cdot z^n.
$$
We will write $\overline{G}(u,v):=\overline{G}(u,v|1)$.
By conditioning on the last visit to $u$, a fundamental relation
between these functions is given by
\begin{equation}\label{equ:G-L-splitting}
\overline{G}(u,v|z) = \overline{G}(u,u|z) \cdot \overline{L}(u,v|z).
\end{equation}
Since $(\overline{X}_n)_{n\geq 0}$ is irreducible and has spectral radius
strictly smaller than $1$ the Green functions have a common radius of convergence $R>1$. 
From \cite[Proposition 5.11]{gilch-mueller-parkinson:14} it follows that there is
some $\lambda\in(0,1)$ and some $C>0$ such that for all $u,v\in W$
\begin{equation}\label{equ:Green-estimate}
\overline{G}(u,v)\leq C \cdot \lambda^{d(u,v)}.
\end{equation}
For $v,w\in W$, the \textit{Green distance} is defined as $d_{G}(v,w):=-\log \frac{\overline{G}(v,w)}{\overline{G}(v,v)}$ and we write 
\begin{equation*}
\ell_{G}(w):=d_{G}(e,w).
\end{equation*}
We will show that the limit
$$
\lim_{n\to\infty} \frac{\ell_{G}(\overline{X}_n)}{n}
$$
exists almost surely, and equals the asymptotic entropy $\overline{h}$ (see Theorem~\ref{thm:L-limit}). This limit is called the \textit{rate of escape with respect to the Green distance} of
the retracted walk.

\subsection{Cone covering and last entry times of cones}
\label{sec:cone-coverings}
In order to trace the retracted walk's path to infinity we define inductively a
sequence of nested cones. For this purpose, we use the strongly
automatic structure $\mfA$ for $(W,S)$, which implies a partial order
$\preceq$ on the group elements, \emph{i.e.},
$u\preceq v$ if and only if $C(u)\subseteq C(v)$ for
$u,v\in W$. 
At this point we will make use of the fact that $\mfA_{R}$ is strongly
connected; see Theorem \ref{thm:strongly-connected}.
Furthermore, we use the fact that the subgraph $\mfA_{R}$ is not a circle, \emph{i.e.},
it contains at least one vertex of outdegree of at least $2$ (otherwise $W$
would not have exponential growth). 

Recall that the Cayley graph of $(W,S)$ is a $\delta$-hyperbolic space and hence
  triangles are $\delta$-thin. Note that in this context the ``$\delta$''
  differs from the ``$\delta$'' denoting the Weyl distance function. Define$^1$\footnote{$^1$The divergence function $e(\cdot)$ can be chosen such that $e(0)=\delta$, hence one could also set $L_{1}:=\max\{L_0,\delta\}+1$.} 
  $$L_1:=\max\{e(0),L_0,\delta\}+1.$$ 
\begin{lem}\label{lem:cone non intersection}
Let $w\in W$ and $v\in \mathrm{Int}_{3 L_1}C(w)$. Then $C(v)\cap\partial_{L_1} C(w)=\emptyset.$
\end{lem}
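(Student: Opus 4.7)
The plan is to argue by contradiction: suppose some $u \in C(v) \cap \partial_{L_1} C(w)$. Since $v \in C(w)$, the nested-cone property of the strongly automatic structure $\mfA$ gives $C(v) \subseteq C(w)$; moreover, concatenating the geodesics $[e,w]$, $[w,v]$, $[v,u]$ produces a geodesic from $e$ to $u$ of length $d(e,w)+d(w,v)+d(v,u) = d(e,u)$, so $v$ lies on some geodesic $[w,u]$. From $u \in \partial_{L_1}C(w)$ I pick $y \in W\setminus C(w)$ with $d(u,y) \leq L_1$, and the triangle inequality combined with the hypothesis $d(v, W\setminus C(w)) > 3L_1$ yields the key lower bound $d(v,u) > 2L_1$.

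The main tool is then $\delta$-hyperbolicity applied to a carefully chosen small triangle. Walking along the geodesic $[u,y]$ of length at most $L_1$, let $z$ be the last chamber that still lies in $C(w)$; then $z \in \partial C(w)$ and $d(u,z) \leq L_1-1$. By Lemma~\ref{lem:boundary-rays} the boundary $\partial C(w)$ is contained in two geodesic rays $\gamma_1,\gamma_2$ emanating from $w$, so without loss of generality $z \in \gamma_1$. Now consider the geodesic triangle with vertices $w,u,z$: one side $[w,u]$ contains $v$, the side $[w,z]$ is a subsegment of $\gamma_1 \subseteq \partial C(w)$, and the side $[u,z]$ has length at most $L_1-1$. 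Because $L_1 > \delta$, $\delta$-thinness forces $v$ to lie within $\delta$ of one of the other two sides, and each of the two possibilities will contradict one of our hypotheses.

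If $v$ is within $\delta$ of some $p \in [u,z]$, then $d(v,u) \leq d(v,p) + d(p,u) \leq \delta + (L_1-1) < 2L_1$, contradicting the lower bound on $d(v,u)$ established above. If instead $v$ is within $\delta$ of some $p \in [w,z] \subseteq \partial C(w)$, then $p$ has a neighbour $y_p \in W\setminus C(w)$, whence $d(v,y_p) \leq \delta + 1 \leq L_1 < 3L_1$, contradicting $v \in \mathrm{Int}_{3L_1}C(w)$. The only real content is spotting that the triangle $(w,u,z)$ is the right one to feed into $\delta$-thinness: the factor of $3$ in the hypothesis and the condition $L_1 > \delta$ are each pinned down precisely to leave no slack in these two cases. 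The degenerate case $w=e$ is trivial since then $C(e)=W$ and $\partial_{L_1}C(e) = \emptyset$.
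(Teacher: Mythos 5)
Your proof is correct, but it routes the argument differently than the paper does. The paper fixes an arbitrary $u\in\partial C(w)$ and $u'\in C(v)$, forms the triangle $(w,u,u')$ with the side $[w,u']$ passing through $v$, picks the point $p$ on $[w,v]$ at distance $2L_1$ from $v$, rules out $p$ being $\delta$-close to $[w,u]$ (using $v\in\mathrm{Int}_{3L_1}C(w)$), concludes by thinness that $p$ is $\delta$-close to $[u,u']$, and hence that $d(u,u')>L_1$; this directly gives the statement. You instead argue by contradiction from a point $u\in C(v)\cap\partial_{L_1}C(w)$, first establish the lower bound $d(v,u)>2L_1$ by the triangle inequality, then manufacture a boundary point $z\in\partial C(w)$ with $d(u,z)\le L_1-1$, and apply $\delta$-thinness to the triangle $(w,u,z)$, applied at $v$ rather than at an auxiliary point $p$. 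Both arguments rest on the same two pillars — $\delta$-thin triangles and the choice $L_1>\delta$, together with the boundary-ray description from Lemma~\ref{lem:boundary-rays} — but your triangle has a very short third side $[u,z]$, which lets you dispose of the two $\delta$-thinness cases by two clean numeric contradictions rather than by a distance estimate on the opposite side. One small imprecision to watch: you invoke Lemma~\ref{lem:boundary-rays} and then write $[w,z]\subseteq\gamma_1\subseteq\partial C(w)$, but that lemma only asserts $\partial C(w)\subseteq\gamma_1\cup\gamma_2$, not the reverse containment. The reverse does hold here by planarity (the rays bound the wedge $C(w)$ and so their chambers have neighbours outside the cone), and the paper also leans on this implicitly, but it is worth stating the fact you actually need rather than attributing it to the cited lemma; alternatively you can sidestep it entirely in Case~2 by noting that $[w,z]$ lies within $\delta$ of the geodesic ray through $z$ contained in $\partial C(w)$, which only costs you an extra $\delta$ in the final estimate $d(v,W\setminus C(w))\le 2\delta+1<3L_1$.
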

\begin{proof}
Let $u\in \partial C(w)$ and $u'\in C(v)$, and consider the following geodesic
triangle: let $\gamma$ be a geodesic from $w$ to $u$, $\gamma'$ be a geodesic
from $w$ to $u'$ passing through $v$, and $\tilde \gamma$ be a geodesic from
$u$ to $u'$. Let $p\in \gamma'$ such that $p\notin C(v)$ and $d(p,v)=2 L_1$. As $p$ is not in the
$\delta$-neighborhood of $\gamma$ (recall that $L_1>\delta$ and
$v\in\mathrm{Int}_{3L_1} C(w)$) there exists
some point $\tilde p\in \tilde \gamma$ such that $d(p,\tilde p)\leq \delta <L_1$. Since
$u'\in C(v)$ we get $d(\tilde p, u')>L_1\geq \delta$ and hence $d(u,u')>L_1$.
\end{proof}

\begin{prop}\label{prop:covering construction}
For every $\bT\in\mfA_{R}$ there exists a set $\mathrm{Cov}(\bT):=\{u_{1},u_{2},\ldots \}\subset
W$ such that, for all $w\in W$ with $T(w)=\bT$, the following properties hold:
\begin{enumerate}
\item $wu_i\in\mathrm{Int}_{3L_1}C(w)$ for all $i\in \N$,
\item $C(wu_{i})\subseteq \mathrm{Int}_{L_1} C(w)$ for all $i\in \N$,
\item $C(wu_i)\cap C(wu_j)=\emptyset$ for $i\neq j$,
\item $ \{ T(wu_{1}), T(wu_{2}),  \ldots\}=\mfA_{R}$,
\item $C(w) \setminus \bigcup_{i\geq 1} C(wu_{i})\subset \partial_{3L_{1}} C(w)\cup \cB_{L}(w)$ for some constant $L=L(\bT)$,
\item $\mathrm{Cov}(\bT)\cap \cB_{n}(e)$ grows linearly in $n$.
\end{enumerate}
\end{prop}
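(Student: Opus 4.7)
The set $\mathrm{Cov}(\bT)$ is required to depend only on the cone type $\bT$, and the six listed properties are invariant under the label-preserving bijection $C(w_1)\to C(w_2)$ induced by left multiplication whenever $T(w_1)=T(w_2)=\bT$: this bijection identifies cone types, boundaries $\partial_L C(\cdot)$, and interiors $\mathrm{Int}_L C(\cdot)$. Accordingly, the plan is to fix one $w^*\in W$ with $T(w^*)=\bT$, construct a suitable set $\{w^*u_1,w^*u_2,\ldots\}\subset C(w^*)$, and take $\mathrm{Cov}(\bT):=\{u_1,u_2,\ldots\}$; the six properties for arbitrary $w$ then follow by this invariance.

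The construction begins with a greedy enumeration: list the elements of $\mathrm{Int}_{3L_1}C(w^*)$ in order of increasing distance from $w^*$ (ties broken arbitrarily) and, processing them in order, add $w^*u$ to the selected family whenever $w^*u\notin C(w^*u_j)$ for every previously chosen $u_j$. Property~(1) holds by construction. Property~(2) is immediate from Lemma~\ref{lem:cone non intersection}: $w^*u_i\in\mathrm{Int}_{3L_1}C(w^*)$ forces $C(w^*u_i)\cap\partial_{L_1}C(w^*)=\emptyset$, and since $C(w^*u_i)\subseteq C(w^*)$ by nesting of $\mfA$-cones, we obtain $C(w^*u_i)\subseteq\mathrm{Int}_{L_1}C(w^*)$. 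For disjointness (property~3), the nested-or-disjoint dichotomy for $\mfA$-cones rules out overlapping; if $u_i$ is selected before $u_j$, the greedy rule forbids $w^*u_j\in C(w^*u_i)$, while $d(w^*,w^*u_i)\leq d(w^*,w^*u_j)$ together with the fact that the root is the unique closest-to-$w^*$ chamber of its cone forbids the reverse nesting. The main half of property~(5) follows because any element of $\mathrm{Int}_{3L_1}C(w^*)$ is either a selected root or already lies in some previously selected cone, so the uncovered elements of $C(w^*)$ lie in $\partial_{3L_1}C(w^*)$.

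Property~(6) is obtained from Lemma~\ref{lem:boundary-rays}: since $\partial C(w^*)$ lies in the union of two geodesic rays, $\partial_{3L_1}C(w^*)$ is a bounded-width neighborhood of two rays and $|\partial_{3L_1}C(w^*)\cap S_n(w^*)|$ is $O(1)$ independently of $n$; any selected $u_i$ with $d(w^*,w^*u_i)=n$ has its unique $\mfA$-predecessor in that set (otherwise $w^*u_i$ would already lie in a larger selected cone), which gives the linear growth. The remaining and most delicate property is~(4): some recurrent types may fail to appear as first-entry types $T(w^*u_i)$ under greedy selection alone. To remedy this, I would use strong connectedness of $\mfA_R$ (Theorem~\ref{thm:strongly-connected}) together with exponential divergence of the two boundary geodesics of a recurrent cone to exhibit, for each $\bT'\in\mfA_R$, some $u'\in W$ with $T(w^*u')=\bT'$ and $w^*u'\in\mathrm{Int}_{3L_1}C(w^*)$ (abundance of deep interior points of every recurrent type follows from exponential growth of $C(w^*)$ versus the bounded-width boundary strip). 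If $w^*u'$ is not selected, it lies in some greedy cone $C(w^*u_i)$, and I would refine that cone by subdividing it along the $\mfA$-children of $u_i$ until the type $\bT'$ is realised as the type of a root in the new selected family. The finitely many such refinements leave finitely many roots uncovered, all within some ball $\cB_L(w^*)$ of radius $L=L(\bT)$, which is precisely the $\cB_L(w)$ correction in property~(5).

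The main obstacle is property~(4): arranging that every recurrent cone type is realised as $T(w^*u_i)$ for a selected $u_i$ which is simultaneously deep enough in $C(w^*)$ to satisfy (1) and (2) requires a careful combination of the combinatorics of $\mfA_R$ (strong connectedness and the presence of a branching vertex, used via Theorem~\ref{thm:strongly-connected} and exponential growth) with the hyperbolic-geometric fact that the two boundary geodesics of a recurrent cone diverge exponentially; all other properties are then essentially bookkeeping consequences of the greedy procedure, Lemma~\ref{lem:cone non intersection}, and Lemma~\ref{lem:boundary-rays}.
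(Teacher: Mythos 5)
Your overall architecture is close to the paper's, but the order of operations and the treatment of property~(4) differ in a way that leaves a genuine gap. The paper does not greedy-select first and then repair: it begins by pre-allocating $k=|\mfA_R|$ pairwise disjoint cones realizing all recurrent types, using the defining feature of the strongly automatic structure that two \emph{distinct} elements at the \emph{same} distance from $e$ have disjoint $\mfA$-cones. Concretely, it chooses, at a common depth $N$, a pair $w_1\neq w'$ in $\mathrm{Int}_{3L_1}C(w)$ with $T(w_1)=\bT_1$; then $C(w_1)\cap C(w')=\emptyset$, so one searches for a cone of type $\bT_2$ inside $C(w')$, and iterates, each time reserving a ``spare'' sibling cone in which to place the next type. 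Only after the $k$ disjoint type-realising roots $u_1,\dots,u_k$ are in place (all within $\cB_L(w)$ with $L:=\max_i |u_i|$) does the greedy fill start, and it starts at distance $L$ — this is exactly where the $\cB_L(w)$ term in property~(5) comes from. Your treatment of (1)–(3), (5) for the fill-up phase, and of (6) via the linear growth of the boundary strip, matches the paper's in spirit.

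The refinement step you propose to repair (4), however, does not obviously go through. If you subdivide a selected cone $C(w^*u_i)$ along the $\mfA$-children of $u_i$, Lemma~\ref{lem:cone non intersection} only tells you $C(w^*u_i)\subseteq \mathrm{Int}_{L_1}C(w^*)$; it does \emph{not} put the children $w^*u_iv_j$ in $\mathrm{Int}_{3L_1}C(w^*)$, so some of the new roots may sit in $\partial_{3L_1}C(w^*)\setminus\partial_{L_1}C(w^*)$. If you include such a root you violate property~(1); if you exclude it, the sub-cone $C(w^*u_iv_j)$ (which can reach arbitrarily deep into $\mathrm{Int}_{3L_1}C(w^*)$) is no longer covered, and you must restart a greedy pass inside it — at which point you no longer control how many of the old roots/residuals end up uncovered, undermining the claim that they stay inside $\cB_L(w^*)$. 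Moreover, ``subdividing until the type $\bT'$ appears'' tacitly uses a bounded-length path in $\mfA_R$ to $\bT'$ from the type of $u_i$, but the target $u'$ you want to promote to a root must simultaneously lie in $\mathrm{Int}_{3L_1}C(w^*)$, and the intermediate stages of the path need not. The paper's sibling-at-equal-depth trick circumvents all of this by making the $k$ type-realising cones disjoint \emph{by construction} before any filling happens. If you wish to keep your greedy-first structure, you would need to make the refinement precise (iterating the greedy fill after each subdivision, and bounding the total error set), and at that point the argument essentially reduces to the paper's version with extra bookkeeping.
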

\begin{proof}
Let be $\bT\in\mfA_{R}$ and $w\in W$ with $T(w)=\bT$. 
We enumerate the elements in $\mfA_{R}$ by  $\bT_{1},\bT_{2},\ldots,\bT_{k}$ where $k=| \mfA_{R} |$. There is some  $N\in\N$ large enough
such that there are $w_1,w'\in\mathrm{Int}_{3L_1} C(w)$ with $w_1\neq
w'$ and $d(w,w_1)=d(w,w')=N$ and $\bT(w_{1})=\bT_{1}$. 
We then have $C(w_1)\cap C(w')=\emptyset$, and we have found a
cone of type $\bT_{1}$, set $u_1:=w^{-1}w_1$ and add $u_1$ to $\mathrm{Cov}(\bT)$. In the same way we  search
for a cone of type $\bT_{2}$ in $C(w')$ and find $u_{2}$ that we add to our
covering $\mathrm{Cov}(\bT)$. This procedure is again repeated $k-2$ times such
that the covering $\mathrm{Cov}(\bT)$ contains all possible cone types, see
property \emph{4}. We now ``fill'' up the covering in order to ensure property
\emph{5}. For  $n\in\mathbb{N}$, set $\mathcal{S}_n(w):=\{w_0\in W \mid d(w,w_0)=n\}$.
 We start with $n=L:=\max\{|u_1|,\dots,|u_k|\}$: for each $\bar w \in\mathcal{S}_{n}(w)\cap \mathrm{Int}_{3L_1}C(w)$ such that
$\bar w\notin C(wu)$ for all $u\in \mathrm{Cov}(\bT)$ we add the element $w^{-1}\bar w$ to the covering $\mathrm{Cov}(\bT)$. Inductively
this is repeated  for all $n>L$. The resulting covering $\mathrm{Cov}(\bT)$
verifies properties \emph{1}, \emph{3}, and \emph{5}. Property \emph{2}~holds due to Lemma \ref{lem:cone
  non intersection} and since $wu_i\in\mathrm{Int}_{3L_1} C(w)$. Property \emph{6}~is a consequence of the planarity of
the Cayley graph. For $n$ sufficiently large 
 any $u'\in\mathrm{Cov}(\bT)\cap
\cB_{n}(e)$ has to be of the form $u'=\tilde u s$ with $\tilde u \in \partial_{3L_1}
C(w)\cap B_{n-1}(w)$ and $s\in S$. Now, since $\partial_{3L_1}
C(w)\cap B_{n-1}(w)$ grows linearly property \emph{6} follows.
\end{proof}

For each $\bT\in\mfA_R$ we now fix a covering $\mathrm{Cov}(\bT)$ satisfying
the five properties in Proposition~\ref{prop:covering construction}. We write
$\mathrm{Cov}(w):=w\mathrm{Cov}(\bT)=\{wu \mid u\in\mathrm{Cov}(\bT)\}$ for $w\in W$ with $T(w)=\bT$. 

We now define a covering of $W$  by induction.
Let be $K\in\N$ such that $ W\setminus \mathcal{B}_K(e)$ contains only recurrent cone
types. Define $M_0:=\mathcal{B}_{K+1}(e)\setminus \mathcal{B}_K(e)$ and set $\mathrm{Cov}_{0}:=\bigcup_{w\in M_0} \mathrm{Cov}(w)$. Note that we have that the cones $C(w),~w\in \mathrm{Cov}_{0},$ are pairwise disjoint. Furthermore, \mbox{$\mfA_R=\{T(w)\mid w\in \mathrm{Cov}_0\}$.} Given the set $\mathrm{Cov}_{n}$ for
some $n\in\N$, we define inductively
\[
\mathrm{Cov}_{n+1}:=\bigcup_{w\in \mathrm{Cov}_{n}} \mathrm{Cov}(w).
\]
We set $\mathrm{Cov}:=\bigcup_{n\geq 0} \mathrm{Cov}_{n}$. 
The next lemma states that the elements of $\mathrm{Cov}$ are (in a certain sense) dense in $W$.


\begin{lemma}\label{lem:cover-dense}
\begin{enumerate}
\item There exists some $K\in\mathbb{N}$ such that for every $w_0\in W$ and all $w\in C(w_0)$ there exists some
$v\in\mathrm{Cov}\cap C(w_0)$ with $d(w,v)\leq K$.
\item There exists an integer $K_1\in\mathbb{N}$ such that the following holds:
  if $w_0\in\mathrm{Cov}_k$, $k\geq 1$, and $w\in\partial_{L_0}
  \mathrm{Int}_{3L_1} C(w_0)$ 
then there exists $v\in
  \mathrm{Cov}_{k+2}\cap C(w_0)$ such that $v$ can be reached from
  $w$ by a path inside $\mathrm{Int}_{L_1} C(w_0)$ of length at most $K_1$.
\end{enumerate}
\end{lemma}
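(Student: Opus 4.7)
The plan is to exploit the recursive nested structure $\mathrm{Cov}=\bigcup_n \mathrm{Cov}_n$ together with the six properties of Proposition~\ref{prop:covering construction}. Throughout, we use crucially that there are only finitely many cone types, so that the constant $L=L(\bT)$ of Property~(5) is uniformly bounded by some $L_{\max}$, and so is the minimal norm of an element of $\mathrm{Cov}(\bT)$.

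For part~(1), given $w\in C(w_0)$ with $d(e,w)$ sufficiently large, we first locate the unique element $w^{(0)}\in M_0$ with $w\in C(w^{(0)})$, then iteratively define $w^{(k+1)}\in\mathrm{Cov}(w^{(k)})$ to be the unique sub-cone root with $w\in C(w^{(k+1)})$; this is well-defined by Property~(3) and by the bijectivity of the correspondence between $\mfA$-paths and elements of $W$. As $d(e,w^{(k)})$ is strictly increasing, the recursion terminates at some step $k^\ast$, and Property~(5) then forces $w\in\mathcal{B}_L(w^{(k^\ast)})\cup\partial_{3L_1}C(w^{(k^\ast)})$. In the root-close case we set $v=w^{(k^\ast)}\in\mathrm{Cov}$, for which $d(w,v)\leq L_{\max}$. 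In the boundary-close case, Lemma~\ref{lem:boundary-rays} tells us that $\partial C(w^{(k^\ast)})$ lies on two geodesic rays in $\mfA$; we then ``step laterally'' to a sibling sub-cone of $C(w^{(k^\ast-1)})$, which exists within bounded distance because Property~(4) together with the strong connectedness of $\mfA_R$ (Theorem~\ref{thm:strongly-connected}) guarantees that every recurrent cone type is realised among the sub-cone roots, while planarity of the Cayley graph controls how far one must ascend in the nesting. In both cases, $v\in C(w_0)$ because the nesting $C(w^{(k)})\subseteq C(w_0)$ holds for every $k$; small cases with $d(e,w)$ bounded are absorbed into the constant $K$.

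For part~(2), we have $w_0\in\mathrm{Cov}_k$ with $k\geq 1$ and $w\in\partial_{L_0}\mathrm{Int}_{3L_1}C(w_0)\subseteq\mathrm{Int}_{3L_1}C(w_0)\subseteq\mathrm{Int}_{L_1}C(w_0)$. By Property~(5) applied to $w_0$, either $w\in\mathcal{B}_L(w_0)$ or $w\in C(w_0u_i)$ for some $u_i\in\mathrm{Cov}(T(w_0))$, and Property~(2) gives $C(w_0u_i)\subseteq\mathrm{Int}_{L_1}C(w_0)$. A second application of Property~(5) inside $C(w_0u_i)$ then yields either a bounded detour to the root $w_0u_i$ or an index $u_j\in\mathrm{Cov}(T(w_0u_i))$ with $w\in C(w_0u_iu_j)$, so that $v=w_0u_iu_j\in\mathrm{Cov}_{k+2}$. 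Choosing the $u_i,u_j$ of minimum length in their respective coverings (bounded uniformly by finiteness of cone types) and concatenating a short path from $w$ into $C(w_0u_i)$ with a geodesic from the sub-cone root $w_0u_i$ to $v$ gives a path of uniformly bounded length $K_1$. Path-containment in $\mathrm{Int}_{L_1}C(w_0)$ follows from Property~(2) applied at each level and from the exponential divergence of the two boundary rays of $C(w_0)$ ensured by $L_1>e(0)$, which keeps all intermediate points away from $\partial_{L_1}C(w_0)$.

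The main obstacle is the boundary-close sub-case of part~(1): when $w$ lies near the boundary of a deeply nested cone, no sub-cone of the terminal $w^{(k^\ast)}$ contains $w$, and one must execute the ``lateral step'' to a sibling cone. Controlling the number of nesting levels that must be ascended before a sibling is reached requires combining the planarity of the Cayley graph (so that the boundary consists of exactly two geodesic rays), the strong connectedness of $\mfA_R$, and Property~(4). A secondary technical point in part~(2) is verifying that a path can be routed entirely inside $\mathrm{Int}_{L_1}C(w_0)$ when $w$ is close to the root $w_0\in\partial_{L_1}C(w_0)$; this is handled by routing through a sub-cone root $w_0u_i$, whose location in $\mathrm{Int}_{3L_1}C(w_0)$ (Property~(1)) opens an entire cone $C(w_0u_i)$ within the interior (Property~(2)).
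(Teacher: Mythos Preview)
Your overall skeleton matches the paper's: recursively descend through the nested coverings until Property~(5) forces $w$ into either $\mathcal{B}_L(w^{(k^\ast)})$ or $\partial_{3L_1}C(w^{(k^\ast)})$, and then treat these two cases. The root-close case is fine. The gap is in your handling of the boundary-close case, which you yourself flag as the main obstacle.

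Your proposal there is to ``step laterally to a sibling sub-cone of $C(w^{(k^\ast-1)})$,'' justified by Property~(4) and strong connectedness of $\mfA_R$. But neither of these ingredients addresses the issue. Property~(4) says every cone type appears among the sub-cone roots; strong connectedness says every type is reachable from every other in $\mfA_R$. Neither tells you that some element of $\mathrm{Cov}$ lies within bounded distance of $w$. Concretely: the sibling roots in $\mathrm{Cov}(w^{(k^\ast-1)})$ are distributed along $\partial_{L_0}\mathrm{Int}_{3L_1}C(w^{(k^\ast-1)})$, but $w$ sits near a boundary ray of $C(w^{(k^\ast)})$, possibly very far (along that ray) from any such root. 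Your suggestion to ``ascend in the nesting'' does not help either: ascending levels tells you $w$ lies in successively larger interiors, but it was already in the sub-cone $C(w^{(k^\ast)})$ at every level, so you learn nothing new about proximity to a covering root.

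The paper resolves this with a different and more concrete idea. After the descent, take $\hat w\in\partial C(w_0)$ with $d(w,\hat w)\leq 6L_1$. The cone $C(\hat w)$ has two boundary geodesics (rays in $\mfA$, by Lemma~\ref{lem:boundary-rays}); one of them, say $\gamma_1$, enters $\mathrm{Int}_{3L_1}C(w_0)$. The first point $\hat w g_i$ at which $\gamma_1$ enters $\mathrm{Int}_{3L_1}C(w_0)$ is then forced, by the explicit sweep construction of $\mathrm{Cov}(\bT)$ in Proposition~\ref{prop:covering construction}, to lie in $\mathrm{Cov}$; and the index $i$ depends only on $T(\hat w)$, so is uniformly bounded. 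This yields $d(w,\mathrm{Cov})\leq 6L_1+i_{T(\hat w)}$. No sibling argument or level-climbing is needed.

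Your Part~(2) inherits the same gap. You write ``choosing the $u_i,u_j$ of minimum length'' and then connect $w$ to $v=w_0u_iu_j$ via the root $w_0u_i$, but $u_i$ is not chosen---it is the unique element of $\mathrm{Cov}(T(w_0))$ with $w\in C(w_0u_i)$, and its root $w_0u_i$ can be arbitrarily far from $w$ along the boundary of $C(w_0)$. The paper instead reuses the boundary-ray mechanism from Part~(1) inside $C(v_0)$ (where $v_0\in\mathrm{Cov}_{k+1}$ is the sub-cone root with $w\in C(v_0)$), obtaining a path to $\mathrm{Cov}_{k+2}$ that runs through $\partial C(v_0)\subseteq C(v_0)\subseteq\mathrm{Int}_{L_1}C(w_0)$ with uniformly bounded length.
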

\begin{proof}
Let $L$ be the maximal constant from the proof of Proposition \ref{prop:covering construction} (when varying through recurrent cone types). We consider three cases in order to prove the first assertion. 

\noindent \emph{Case 1: }
If $w\in C(w_0)\cap \cB_L(w_0)$ then $d(w,\mathrm{Cov})\leq
d(w,w_0)\leq L$.

\noindent \emph{Case 2: }
If $w\in C(w_0)\setminus \cB_L(w_0)$ and $w\notin \bigcup_{v\in \mathrm{Cov}(w_0)} C(v)$  then  $w\in\partial_{3L_1} C(w_0)$. Thus, there is some $\hat
w\in\partial C(w_0)\setminus \cB_L(w_0)$ with $d(w,\hat w)\leq 6L_1$. Consider
the cone $C(\hat w)$ with boundary geodesics $\gamma_{1}$ and $\gamma_{2}$. One of these geodesics, say  $\gamma_1=[\hat w,\hat w g_1,\hat
w g_2,\dots]$, intersects $\mathrm{Int}_{3L_1}C(w_0)$. Now, 
there exists some $\hat wg_{i_{T(\hat w)}}\in\gamma_1$ of minimal index ${i_{T(\hat w)}}$
such that $\hat wg_{i_{T(\hat w)}}\in\mathrm{Int}_{3L_1}C(w_0)$, and this index ${i_{T(\hat w)}}$ depends
only on the cone type $T(\hat w)$ and not on $\hat w$ itself. Since we have a
strongly automatic structure, we must have $\hat wg_{i_{T(\hat w)}}\in \mathrm{Cov}$, which
yields $d(w,\hat wg_{i_{T(\hat x)}})\leq 6L_1+{i_{T(\hat w)}}$. Since we have only finitely many one types
we get the claim also in this case, and we may set $K:=\max \{L,6L_1+i_{\bT}
\mid \bT\in\mfA_R\}$.

\noindent \emph{Case 3: }
If $w\in \bigcup_{v\in \mathrm{Cov}(w_0)} C(v)$ and $d(w_0,w)>L$ then we 
choose $w'\in \mathrm{Cov}(w_0)$ with $w\in C(w')$ and we exchange $w_0$ by $w'$,
and we iterate the proof from the beginning until $d(w_0,w)\leq L$ \mbox{(case 1)} or $w\notin \bigcup_{v\in \mathrm{Cov}(w_0)} C(v)$ (case 2). This proves part \emph{1}.

For the proof of part \emph{2} we first consider the case that $w\in\partial_{L_0}
\mathrm{Int}_{3L_1} C(w_0)\setminus \cB_L(w_0)$. From the proof of part (i)
follows that there is some $v_0\in\mathrm{Cov}_{k+1}\subseteq
\mathrm{Int}_{3L_1} C(w_0)$ such that $w\in C(v_0)$. Since
$C(v_0)\subseteq \mathrm{Int}_{L_1} C(w_0)$ and $L_0\leq L_1$ we must have that
$w\in\partial_{3L_1} C(v_0)$. Also from the proof of part \emph{1} follows the
following: if $d(w,v_0)\leq L$ then there is a path from $w$ to
$\mathrm{Cov}_{k+2}\cap C(v_0)$ via $v_0$ which lies in $C(v_0)\subseteq \mathrm{Int}_{L_1}
C(w_0)$ and has a length less or equal to $2L$;  if $d(w,v_0)> L$ then there is a path from $w$ to
$\mathrm{Cov}_{k+2}\cap C(v_0)$ via $\partial C(v_0)$ which lies in $\mathrm{Int}_{L_1}
C(w_0)$ and has a length less or equal to $K$. That is, we have shown that
$\mathrm{Cov}_{k+2}$ can be reached from $w$ on a path running entirely through
$\mathrm{Int}_{L_1} C(w_0)$ of length at most $\max\{2L,K\}$. The remaining
case $w\in\partial_{L_0} \mathrm{Int}_{3L_1} C(w_0)\cap \cB_L(w_0)$ constitutes
finitely many cases and therefore choosing $K_1$ sufficiently large proves part \emph{2}.
\end{proof}

The motivation for the construction of the nested coverings $\mathrm{Cov}_{n}$
is that it allows us to trace the retracted walk's way to infinity: the limit
point $\overline{X}_{\infty}$ at ``infinity'' of the retracted walk is in
$1$-to-$1$-relation with a sequence of nested cones (defined by the coverings) that are visited all but finitely many times. To make this more precise, we  use the notation of  \textit{last entry times:} for $k\geq 0$ the $k$-th \textit{last entry time} is given by
\[
\e{k}:=\inf\bigl\lbrace m\in\N \,\bigl|\, \exists w\in \mathrm{Cov}_k: \,
\overline{X}_m\in \mathrm{Int}_{3L_1} C(w),\forall n\geq m:
\overline{X}_n\in \mathrm{Int}_{L_1} C(w) \bigr\rbrace.
\]
See also Figure \ref{fig:last exit}.
Furthermore, we define $\mathcal{R}_k:=w$ if
$\overline{X}_{\e{k}}\in C(w)$ with $w\in \mathrm{Cov}_k$. In other words,
$\mathcal{R}_k$ is the root of the cone whose $L_1$-interior is finally
entered by the retracted random walk $(\overline{X}_n)_{n\geq 0}$ at time $\e{k}$. 

\begin{figure}
\centering
 \begin{tikzpicture}[scale=2]
  \begin{scope}
  \draw (0,0) arc (-70:-30:5);
  \end{scope}
  \begin{scope}[yscale=-1]
  \draw (0,0) arc (-70:-46:5);
  \end{scope}
  \draw(0,0)node[left]{$w_{0}$};
  \fill(0,0) circle[radius=0.8pt, fill=black];

  \begin{scope}[xshift=1.8cm, yshift=0.1cm]
  \draw (0,0) arc (-55:-23.5:5);
  \fill(0,0) circle[radius=0.8pt, fill=black];
  \draw(0,0)node[left]{$w_{0}u$};
  \end{scope}
  
  \draw[-] (1.8,0.1) -- (4,0.1);

\begin{scope}[xshift=1.6cm, yshift=-0cm, yscale=-1]
  \draw (0,0) arc (-60:-40:5);
  \end{scope}
  \draw[-] (1.6,0) -- (4,0);


\begin{scope}[xshift=1cm, dashed]
  \draw (0,0) arc (-60:-25:5);
  \end{scope}
  \begin{scope}[yscale=-1, xshift=1cm, dashed]
  \draw (0,0) arc (-60:-40:5);
  \end{scope}

  \draw[dashed] (2.9,0.5) arc (-45:-21:5);
  
  \draw[-, dashed] (2.9,0.5) -- (4,0.5);

\draw[dotted] (2.3,0.25) arc (-50:-22:5);
  
  \draw[-, dotted] (2.3,0.25) -- (4,0.25);

\fill(3.45,1.02) circle[radius=0.8pt, fill=black];
\draw(3.45,1.02)node[right]{$\overline{X}_{{\mathbf{e}}_{k}}=w_{0}uy$};

\draw plot [smooth, tension=1.5, dashed] coordinates { (0,-1) (0.7,-0.3) (1,1)  (1.5,-0.5)   (2.5,1)  (3.41,1) (3.3,1.3) (4,1.5)};
\draw(0.7,0.9)node[above]{$\overline{X}_{n}$};

\filldraw[fill=white, draw=white] (0,2.2) rectangle (4.2,2.25);
\filldraw[fill=white, draw=white] (0,-1.15) rectangle (3,-1.2);

\end{tikzpicture}
\caption{Illustration of the last exit time $\mathbf{e}_{k}$, where $\mathcal{R}_k=w_0u\in\mathrm{Cov}_k$.}
\label{fig:last exit}
\end{figure}

\begin{lem}\label{lem:last-entry-exponential-moments}
For all $k\in\N$ the last entry times $\e{k}$ are almost surely finite.  Moreover, the random variables $\e{0}$ and $\e{k}-\e{k-1}$ have uniform exponential moments, i.e., there exists some constants $\lambda_{\mathbf{e}}, K_{\mathbf{e}}>0$ 
such that $\E[\exp(\lambda_{\mathbf{e}}\e{0})]<K_{\mathbf{e}}$ and $\E[\exp(\lambda_{\mathbf{e}}(\e{k}-\e{k-1})]<K_{\mathbf{e}}$ for all $k\geq 1$.
\end{lem}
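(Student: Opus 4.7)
The plan is to prove the claim via a standard geometric-tail argument. Specifically, I establish a uniform \emph{commitment estimate}: there exist constants $\tau\in\N$ and $q_0>0$ (independent of $k$ and of the history of the walk) such that for every $t$, on the event $\{\e{k}>t\}$ the conditional probability that $\e{k}\le t+\tau$ is at least $q_0$. Combined with the strong Markov property of $(\overline{X}_n)_{n\geq 0}$, this yields
$\Prob[\e{k}-\e{k-1}>j\tau\mid \mathcal{F}_{\e{k-1}}]\le (1-q_0)^j$
for all $j\in\N$ (and similarly for $\e{0}$). This immediately gives both a.s.\ finiteness of each $\e{k}$ and the claimed uniform exponential moments, taking any $\lambda_{\mathbf{e}}<-\tau^{-1}\log(1-q_0)$.

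To establish the commitment estimate for $k\geq 1$, assume $\e{k-1}\leq t$ with $\mathcal{R}_{k-1}=w_0\in\mathrm{Cov}_{k-1}$, so $\overline{X}_t\in\mathrm{Int}_{L_1} C(w_0)$. Using Lemma~\ref{lem:cover-dense} together with the single-step positivity \eqref{eq:support}, I construct a deterministic path of uniformly bounded length $\tau_0$ from $\overline{X}_t$ to some $v\in\mathrm{Cov}(w_0)\subseteq \mathrm{Cov}_k$, and extend it by a short segment (using Property 1 of Proposition~\ref{prop:covering construction} applied to $v$: any element of $\mathrm{Cov}(v)$ lies in $\mathrm{Int}_{3L_1} C(v)$) to a point $y\in\mathrm{Int}_{3L_1} C(v)$; the whole path stays inside $\mathrm{Int}_{L_1} C(w_0)$. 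By \eqref{eq:support} the walk traverses this prescribed path with probability at least $\varepsilon_0^{\tau_0}$.

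At the endpoint $y$, Lemma~\ref{lem:stayincone} gives that the walk remains in $C(y)$ forever with probability at least $\overline{p}_{esc}$. Since $y\in\mathrm{Int}_{3L_1} C(v)$, Lemma~\ref{lem:cone non intersection} shows $C(y)\subseteq \mathrm{Int}_{L_1} C(v)$, so the walk stays in $\mathrm{Int}_{L_1} C(v)$ forever, realizing the event defining $\e{k}$ with $\mathcal{R}_k=v$. Thus the commitment occurs with probability at least $q_0:=\varepsilon_0^{\tau_0}\cdot \overline{p}_{esc}>0$ within $\tau:=\tau_0$ steps. The argument for $\e{0}$ is entirely analogous, starting at $\overline{X}_0=e$ and steering the walk through a bounded-length path into some $y\in\mathrm{Int}_{3L_1} C(v)$ with $v\in\mathrm{Cov}_0$.

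The main obstacle is ensuring that $\tau_0$ and $q_0$ are genuinely uniform in $k$, in the root $w_0$, and in the intermediate position $\overline{X}_t$. This rests on the finiteness of cone types in $\mfA$, which makes both the length bound from Proposition~\ref{prop:covering construction}/Lemma~\ref{lem:cover-dense} and the constant $\overline{p}_{esc}$ in Lemma~\ref{lem:stayincone} independent of the hierarchy level, together with the strong connectivity of $\mfA_R$ from Theorem~\ref{thm:strongly-connected}, which guarantees that $\mathrm{Cov}$-elements of every required type are present at each level. Once these uniformities are secured, the geometric tail bound and hence the exponential moments follow.
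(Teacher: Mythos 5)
Your plan captures the right heuristic — a geometric number of ``commitment attempts'', each succeeding with probability at least $\overline{p}_{esc}$, driving a geometric tail bound — and this is indeed the same spirit as the paper's argument. However, there is a genuine gap in the implementation: the last entry times $\e{k}$ are defined via the \emph{infinite future} of the walk (the clause ``$\forall n\geq m: \overline{X}_n\in\mathrm{Int}_{L_1}C(w)$''), so they are \emph{not} stopping times. Consequently, $\mathcal{F}_{\e{k-1}}$ is not the $\sigma$-algebra of a stopping time, and the strong Markov property at $\e{k-1}$ cannot be invoked as you do. Likewise, $\{\e{k}>t\}$ is not $\mathcal{F}_t$-measurable; conditioning on it secretly biases the \emph{future} trajectory (e.g., if the walk is currently inside some $\mathrm{Int}_{3L_1}C(w)$ for $w\in\mathrm{Cov}_k$, then $\{\e{k}>t\}$ forces a later exit from $\mathrm{Int}_{L_1}C(w)$), so your commitment estimate does not straightforwardly iterate via the Markov property. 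The paper sidesteps exactly this difficulty by building the renewal scheme out of genuine stopping times ($\tau_1$ first exit from a fixed ball, $\tau_2$ first visit to a suitable boundary annulus, $\tau_3$ first exit from the attempted cone upon a failed try, and so on), each with uniform exponential moments, and then dominating $\e{k}$ by a geometric sum of these; the geometric count is a stopping time for the embedded Bernoulli structure even though $\e{k}$ itself is not.

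A secondary point: your claim of a deterministic path of uniformly bounded length $\tau_0$ from an arbitrary $\overline{X}_t\in\mathrm{Int}_{L_1}C(w_0)$ to some $v\in\mathrm{Cov}(w_0)\subseteq\mathrm{Cov}_k$ is not supplied by the cited lemmas. Lemma~\ref{lem:cover-dense} part~1 gives a nearby element of $\mathrm{Cov}$ at an unspecified hierarchy level (it could lie much deeper than level $k$), and part~2 requires the starting point to lie in a specific boundary annulus $\partial_{L_0}\mathrm{Int}_{3L_1}C(w_0)$, which $\overline{X}_t$ need not occupy. If the walk is deep inside $C(w_0)$, the distance to the fixed set $\mathrm{Cov}(w_0)$ is unbounded. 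One would instead have to steer the walk towards a nearby element of $\mathrm{Cov}$ at whatever level, and then verify that committing there also triggers the defining event of $\e{k}$ at level $k$ — which requires an additional argument about re-entering $\mathrm{Int}_{3L_1}C(v_k)$ for the level-$k$ ancestor $v_k$ and staying inside $\mathrm{Int}_{L_1}C(v_k)$ thereafter. The paper's scheme avoids this by steering the walk into a controlled boundary annulus (via $\tau_2$) before each attempt.
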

\begin{proof}[Sketch of proof]
We give an idea of the proof for $k=0$. The proof for $k\geq 1$ is similar. For the technical details we refer to the proof of  \cite[Theorem 5.5]{gilch-mueller-parkinson:14}. Let $L$ be the constant in Proposition \ref{prop:covering construction}.  Since $\varrho(\overline{P})<1$, there exists some constant $C$ such that
\[
\PP[\overline{X}_{n}\in\cB_L(e)]\leq C \varrho(\overline{P})^{n},
\]
e.g.~see Woess \cite[Lemma 8.1]{woess}.  In other words, the first exit time $\tau_{1}$ of the ball $\cB_L(e)$ has exponential moments. We wait a random time $\tau_{2}$ until the walk visits some point $v$ in $\partial_{L_{0}} \mathrm{Int}_{3L_1}(w)$ for some $w\notin \cB_L(e)$ . The time $\tau_{2}$ has again exponential moments. Arrived there, the walk has a positive probability of at least $\overline{p}_{esc}$ of staying in the cone $C(v)$ of the arrival point $v$, see Lemma \ref{lem:stayincone}. If it stays inside this cone we have that $\e{0}\leq \tau_{1}+\tau_{2}$.  If it does not stay inside the cone we wait some time $\tau_{3}$ until the walk leaves this cone for the first time. Note here that  this time has uniform (in the position of $\overline{X}_{\tau_{1}+\tau_{2}}$) exponential moments, see \cite[Lemma 5.12]{gilch-mueller-parkinson:14}. We then let $\tau_{4}$ be the time it takes after $\tau_{3}$ that the walk visits $\partial_{L_{0}} \mathrm{Int}_{3L_1}(w)$ for some $w\notin \cB_L(e)$ and so on.  The argument is now repeated until the first successful attempt to stay in one cone forever. Since  all the random variables $\tau_{i}$ have (uniform) exponential moments and a geometric sum of random variables with exponential moments has exponential moments, it follows hat $\e{0}$ has exponential moments.
\end{proof}

We define a new process $(\W_k)_{k\geq 0}$ on the state space $\mathcal{Z}:=\mfA_{R}\times  \bigcup_{\bT\in\mfA_R} \mathrm{Cov}(\bT) \times  W$ as follows:
the case $k=0$
plays a special role and we just set 
$$
\W_0:=\Bigl(T(\mathcal{R}_0),\mathcal{R}_0,\mathcal{R}_0^{-1}\overline{X}_{\e{0}}\Bigr);
$$
for $k\geq 1$, we set
$$
\W_k:=\Bigl(T(\mathcal{R}_k),\mathcal{R}_{k-1}^{-1}\mathcal{R}_k,\mathcal{R}_k^{-1}\overline{X}_{\e{k}}\Bigr).
$$
If
$\W_{k}=(\bT,u,y)$ then
there is some $w_0\in \mathrm{Cov}_{k-1}$ such that $u\in \mathrm{Cov}(w_0)$ with $T(w_0u)=\bT$
and $\overline{X}_{\e{k}}=w_0uy$; see Figure \ref{fig:last exit}.

The random variable $\W_0$ takes values in a set $\mathcal{W}_0:=\mathrm{supp}(\W_0)\subseteq
\mfA_{R}\times W \times W$, and the random
variables $\W_k$, $k\geq 1$, take values in a set $\mathcal{W}=\bigcup_{k\geq
  1} \mathrm{supp}(\W_k)\subseteq
\mathcal{Z}$. Thus, the sequence $(\W_k)_{k\geq 1}$ is a stochastic process  on
$\mathcal{W}$, which is induced by $(\overline{X}_n)_{n\geq 0}$.
\par
Observe that there is a 1-1-correspondence of the sequences
$\W_0,\dots,\W_k$ and $\overline{X}_{\e{0}},\dots,\overline{X}_{\e{k}}$: obviously,
for any given realisation of $\overline{X}_{\e{0}},\dots,\overline{X}_{\e{k}}$ we obtain unique
values for $\W_0,\dots,\W_k$; vice versa, for given values of
$\W_0=(\bT_0,u_0,y_0),\dots,\W_k=(\bT_k,u_k,y_k)$ we can successively describe which subcone is
entered one after the other one including the information which of the elements
in $W$ are the last entry time points, namely
$$
\mathcal{R}_k=\prod_{i=0}^k u_i,\quad \overline{X}_{\e{k}}= \mathcal{R}_ky_k.
$$
We define further generating functions: let be
$\w_1=(\bT_1,u_1,y_1),\w_2=(\bT_2,u_2,y_2)\in\mathcal{W}$ such that
$\Prob[\W_{k}=\w_1, \W_{k+1}=\w_2]>0$ for some $k\in\N$. Choose any  $w_0\in W$ with
$T(w_0)=\bT_1$ and define 
$$
\widehat{G}(\w_1,\w_2) := \sum_{n\geq 1}
\Prob_{w_0y_1}\left[
\begin{array}{c}
\forall m\leq n: \overline{X}_m\in \mathrm{Int}_{L_1} C(w_0), \\
\overline{X}_{n-1}\notin \mathrm{Int}_{3L_1} C(w_0u_2),
\overline{X}_n=w_0u_2y_2
\end{array}
\right].
$$
In words, the summands describe the probability that one walks from some
$w_0y_1$ to some $w_0u_2y_2$ in the following way: one starts at
$w_0y_1\in \mathrm{Int}_{3L_1}C(w_0)$, walks then inside $\mathrm{Int}_{L_1} C(w_0)$ to some
$w_0u_2y_2\in \mathrm{Int}_{3L_1} C(w_0u_2)\subset \mathrm{Int}_{L_1} C(w_0)$ such that the
step before arriving at $w_0u_2y_2$ is outside the $3L_1$-interior of the cone $C(w_0u_2)$.
The values $\widehat{G}(\w_1,\w_2)$ are well-defined since only paths inside the
$L_1$-interior of a cone of type $\bT_1$ are considered, implying that
the occuring probabilities depend only on the cone
type $\bT_1$ and \textit{not} on the specific choice of $w_0$, see Proposition \ref{prop:invariance}.

Let $\mathfrak{w}=(\bT,u,y)\in \mathcal{W}\cup\mathcal{W}_0$ and choose $w_0\in W$ such that
$T(w_0)=\bT$. Recall that $w_0y\in\mathrm{Int}_{3L_1} C(w_0)$ and therefore
$C(w_0y)\subseteq \mathrm{Int}_{L_1} C(w_0)$. Define
\begin{eqnarray*}
\xi(\w) & := & \Prob_{w_0y}\bigl[\forall n\geq 0: \overline{X}_n\in
\mathrm{Int}_{L_1} C(w_0)\bigr]\\
& = & 1-\sum_{n\geq 1} \Prob_{w_0y}\bigl[\overline{X}_n\in W\setminus \mathrm{Int}_{L_1}C(w_0),\forall m<n:
\overline{X}_n\in \mathrm{Int}_{L_1}C(w_0)\bigr].
\end{eqnarray*}
In other words, $\xi(\w)$ is the probability that the
$L_1$-interior of the cone $C(w_0)$ will never be exited when starting at $w_0y$.
Observe that the definition of $\xi(\w)$ is independent of the specific
choice of $w_0$ since we only consider paths \textit{inside} the $L_1$-interior
of the cone $C(w_0)$ (and
the first step into the $L_{1}$-boundary of the cone). An important fact is
that the values $\xi(\w)$ are uniformly positive:
\begin{lem}\label{lemma:xi>0}
There is a constant $C_0>0$ such that $\xi(\w)\geq C_0$ for all $\w\in\mathcal{W}\cup \mathcal{W}_0$.
\end{lem}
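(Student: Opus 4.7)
The approach would be to reduce the claim directly to Lemma \ref{lem:stayincone}, which already furnishes a uniform lower bound on the probability of remaining forever in a cone $C(w)$ when started at its root~$w$. Fix $\w = (\bT, u, y) \in \mathcal{W}\cup\mathcal{W}_0$ and an arbitrary $w_0 \in W$ with $T(w_0) = \bT$. As recalled just before the statement, the construction guarantees $w_0 y \in \mathrm{Int}_{3L_1} C(w_0)$, so the natural candidate for the ``sub-cone inside the $L_1$-interior'' is $C(w_0 y)$.

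First I would verify the inclusion $C(w_0 y) \subseteq \mathrm{Int}_{L_1} C(w_0)$. Since $y \in \bT = T(w_0)$ we have $w_0 y \in C(w_0)$, whence $C(w_0 y) \subseteq C(w_0)$ by the nesting property of cones with respect to $\mfA$. Applying Lemma \ref{lem:cone non intersection} with $w := w_0$ and $v := w_0 y \in \mathrm{Int}_{3L_1} C(w_0)$ gives $C(w_0 y) \cap \partial_{L_1} C(w_0) = \emptyset$, hence
$$
C(w_0 y) \subseteq C(w_0) \setminus \partial_{L_1} C(w_0) = \mathrm{Int}_{L_1} C(w_0).
$$

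Next I would apply Lemma \ref{lem:stayincone} at the point $w_0 y$: with probability at least $\overline{p}_{\mathrm{esc}}$ (uniformly in the starting point), the walk started at $w_0 y$ stays in $C(w_0 y)$ for all time. On this event, the walk a fortiori stays inside $\mathrm{Int}_{L_1} C(w_0)$ by the inclusion above. Consequently
$$
\xi(\w) \geq \Prob_{w_0 y}\bigl[\overline{X}_n \in C(w_0 y) \text{ for all } n \geq 0\bigr] \geq \overline{p}_{\mathrm{esc}},
$$
and one can take $C_0 := \overline{p}_{\mathrm{esc}} > 0$. The uniformity over $\mathcal{W} \cup \mathcal{W}_0$, as well as independence from the particular choice of $w_0$ representing the cone type $\bT$, is automatic because both the inclusion step and Lemma \ref{lem:stayincone} depend only on $\bT$ (via $w_0 y$) and not on $w_0$ itself.

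There is no real obstacle in this argument; the only conceptual point is recognising that the ``buffer'' between $\mathrm{Int}_{3L_1}$ and $\mathrm{Int}_{L_1}$ built into the construction of $\mathrm{Cov}(\bT)$ is precisely what allows the whole sub-cone $C(w_0 y)$ to sit inside $\mathrm{Int}_{L_1} C(w_0)$, which is the single geometric fact needed to transfer the escape estimate of Lemma \ref{lem:stayincone} into a lower bound on $\xi(\w)$.
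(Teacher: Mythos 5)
Your proof is correct and is essentially the paper's own argument: note $w_0y\in\mathrm{Int}_{3L_1}C(w_0)$, deduce $C(w_0y)\subseteq\mathrm{Int}_{L_1}C(w_0)$ (via Lemma~\ref{lem:cone non intersection} together with cone nesting), and then apply Lemma~\ref{lem:stayincone} to take $C_0=\overline{p}_{esc}$. The paper leaves the inclusion step implicit; you have simply spelled it out.
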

\begin{proof}
Let $\w=(\bT,u,y)\in \mathcal{W}\cup\mathcal{W}_0$ and $w_0\in W$ with
$T(w_0)=\bT$. Recall that $w_0y\in\mathrm{Int}_{3L_1} C(w_0)$ yielding
$C(w_0y)\subseteq \mathrm{Int}_{L_1} C(w_0)$. The claim follows now immediately
from Lemma \ref{lem:stayincone}.
\end{proof}
In the proof of the last lemma we use the fact that $X_{\mathbf{e}_k}\in\mathrm{Int}_{3L_1} C(\mathcal{R}_k)$. This explains why we force $X_{\mathbf{e}_k}$ to be in $\mathrm{Int}_{3L_1} C(\mathcal{R}_k)$ and not only in $\mathrm{Int}_{L_1} C(\mathcal{R}_k)$.
\begin{lem}\label{lem:W is ergodic}
The stochastic process $(\W_{k})_{k\geq 1}$ on the state space $\mathcal{W}$ is an irreducible, ergodic Markov chain with
transition probabilities
$$
q(\w_1,\w_2) =
\frac{\xi(\w_2)}{\xi(\w_1)} \widehat{G}\bigl(\w_1,\w_2\bigr),
$$
where $\w_1,\w_2\in\mathcal{W}$ with $\Prob[\W_k=\w_1,\W_{k+1}=\w_2]>0$ for some $k\in\N$.
\end{lem}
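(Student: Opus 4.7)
The plan is to prove the three assertions in order—Markov property with the stated kernel, irreducibility, and ergodicity—with ergodicity being the main obstacle.

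For the first, I would condition on $\W_0,\ldots,\W_k$; this determines $\mathcal{R}_k=\prod_{i=0}^k u_i =: w_0$ with $T(w_0)=\bT_1$, the position $\overline{X}_{\e{k}}=w_0y_1$, and the event $\mathcal{E}(w_0):=\{\overline{X}_n\in \mathrm{Int}_{L_1} C(w_0)\text{ for all }n\geq \e{k}\}$. By the Markov property of $(\overline{X}_n)$ applied at time $\e{k}$ together with the fact that $\mathcal{E}(w_0)$ is a future event, the conditional law of $(\overline{X}_{\e{k}+n})_{n\geq 0}$ is that of the walk started at $w_0y_1$ conditioned on $\mathcal{E}(w_0)$. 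Proposition~\ref{prop:invariance} ensures this conditional law depends on $w_0$ only through $\bT_1$ and hence on the past only through $\w_1$, which is the Markov property. To compute the transition probabilities, observe that $\{\W_{k+1}=\w_2\}$ means there is a time $n$ at which the walk first enters $\mathrm{Int}_{3L_1} C(w_0 u_2)$ at $w_0u_2y_2$ and then stays in $\mathrm{Int}_{L_1} C(w_0 u_2)\subseteq \mathrm{Int}_{L_1} C(w_0)$ forever (the inclusion being property (2) of Proposition~\ref{prop:covering construction}). Summing over $n$ and applying the strong Markov property shows that the joint probability of $\{\W_{k+1}=\w_2\}\cap \mathcal{E}(w_0)$ equals $\widehat{G}(\w_1,\w_2)\,\xi(\w_2)$, while $\mathcal{E}(w_0)$ alone has probability $\xi(\w_1)$ by definition; dividing yields the claimed formula.

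For irreducibility, given $\w_1,\w_2\in\mathcal{W}$, I would construct an explicit path of positive probability in $(\W_k)$ from $\w_1$ to $\w_2$. Since $\mfA_R$ is strongly connected (Theorem~\ref{thm:strongly-connected}) and every recurrent cone type appears in each $\mathrm{Cov}(\bT)$ (property (4) of Proposition~\ref{prop:covering construction}), one may chain together $\W$-steps realising any desired sequence of cone types; the precise positions $u_k,y_k$ can be hit using Lemma~\ref{lem:cover-dense} together with positivity of nearest-neighbour transition probabilities from \eqref{eq:support}. Each factor $\widehat{G}$ in the resulting product is strictly positive and $\xi>0$ by Lemma~\ref{lemma:xi>0}, so $q$ is strictly positive along the path.

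The main obstacle is ergodicity, since $\mathcal{W}$ is countably infinite and positive recurrence is not automatic. The plan is to derive a Foster--Lyapunov drift condition. The bounded-range assumption \eqref{eq:L0} combined with Lemma~\ref{lem:last-entry-exponential-moments} (uniform exponential moments of $\e{k+1}-\e{k}$) implies that both $\ell(u_{k+1})$ and $\ell(y_{k+1})$ have uniform exponential tails under $q(\w,\cdot)$. Defining $V(\w):=\ell(u)+\ell(y)$ for $\w=(\bT,u,y)$, this yields a uniform upper bound on $\E[V(\W_{k+1})\mid \W_k=\w]$ and hence a drift toward the (finite) core of $\mathcal{W}$ where $u$ and $y$ have bounded length. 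Standard Foster--Lyapunov theory, combined with the irreducibility established above and the aperiodicity inherited from the retracted walk, then gives ergodicity.
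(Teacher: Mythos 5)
Your treatment of the transition kernel and irreducibility is essentially the paper's (the paper phrases the kernel computation as a direct path-decomposition of $\Prob[\W_1=\w_1,\ldots,\W_k=\w_k]$, which circumvents the technical point that the last entry times $\e{k}$ are not stopping times, but the splitting at last entry points is the same idea). The gap is in the ergodicity step, and it is genuine. You claim that the uniform exponential moments of $\e{k+1}-\e{k}$ give a uniform upper bound on $\E[V(\W_{k+1})\mid\W_k=\w]$ with $V(\w)=\ell(u)+\ell(y)$. That is false. Since the cones are nested and each root lies on a geodesic from $e$ to anything in its cone, one has $V(\W_{k+1})=\ell(u_{k+1})+\ell(y_{k+1})=d(\mathcal{R}_k,\mathcal{R}_{k+1})+d(\mathcal{R}_{k+1},\overline{X}_{\e{k+1}})=d(\mathcal{R}_k,\overline{X}_{\e{k+1}})$, and by the triangle inequality and bounded range
\[
V(\W_{k+1}) \;\geq\; d(\mathcal{R}_k,\overline{X}_{\e{k}}) - d(\overline{X}_{\e{k}},\overline{X}_{\e{k+1}}) \;\geq\; \ell(y) - L_0\bigl(\e{k+1}-\e{k}\bigr),
\]
so $\E[V(\W_{k+1})\mid\W_k=\w]\geq \ell(y)-L_0\,\E[\e{k+1}-\e{k}\mid\W_k=\w]$, which diverges as $\ell(y)\to\infty$. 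Intuitively, when $\overline{X}_{\e{k}}$ sits far out near the boundary of $C(\mathcal{R}_k)$, the walk enters a subcone close to $\overline{X}_{\e{k}}$, so either $\ell(u_{k+1})$ is comparable to $\ell(y)$ (deep subcone root) or $\ell(y_{k+1})$ is (shallow root, far landing point); in both cases $V$ does not drop. Consequently the drift condition you invoke does not hold, and moment bounds on $\e{k+1}-\e{k}$ alone cannot deliver positive recurrence.

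What actually closes this gap, and what the paper uses, is the geometric uniformity in Lemma~\ref{lem:cover-dense}(2): from any $w\in\partial_{L_0}\mathrm{Int}_{3L_1}C(w_0)$, no matter how far from $w_0$, there is a root $v\in\mathrm{Cov}_{k+2}\cap C(w_0)$ reachable by a path of bounded length $K_1$ staying inside $\mathrm{Int}_{L_1}C(w_0)$. Combined with~\eqref{eq:support} and Lemma~\ref{lem:stayincone}, this gives a Doeblin-type minorization: a fixed target state $\w$ with $|u|\leq L$ can be reached from any starting state in four $\W$-steps with probability bounded below by a constant (roughly $\varepsilon_0^{K_1+2L+\ell(y)}\cdot\overline{p}_{esc}$, which depends on $\w$ but not on the initial state). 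Positive recurrence then follows by a standard geometric return-time argument, and aperiodicity by repeating the loop with one extra detour. If you want to keep a Foster--Lyapunov framing, you would need to use Lemma~\ref{lem:cover-dense}(2) to show drift of $\ell(y)$ over a bounded number of steps, rather than relying on moments of the last entry times; the two ingredients are not interchangeable.
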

\begin{proof}
Let be $\w_1,\dots,\w_{k+1}\in\mathcal{W}$ with
$\Prob[\W_1=\w_1,\dots,\W_{k+1}=\w_{k+1}]>0$.  For any given
$\w_0=(\bT,u,y)\in\mathcal{W}_0$,
we define 
$$
\widehat{G}(e,\w_0) := \sum_{n\geq 1} 
\Prob\bigl[\overline{X}_{n-1}\notin \mathrm{Int}_{3L_1} C(u),\overline{X}_n=uy \bigr].
$$
Write $\w_i=(\mathbf{x}_i,u_i,y_i)$ and set $v_0:=u_0y_0$ and $v_j:=u_0u_1\dots
u_jy_j$ for $j\geq 1$.
Then by definition of
$\widehat{G}(\cdot,\cdot)$ and $\xi(\cdot)$ and due to the $1$-$1$-relation of
$\W_0,\dots,\W_k$ and $\overline{X}_{\e{0}},\dots,\overline{X}_{\e{k}}$ we get:
\begin{align*}
&\hspace{-1.5cm} \Prob\bigl[\W_1=\w_1,\dots,\W_{k}=\w_{k}\bigr] \\
& =   
\sum_{\w_0\in\mathcal{W}_0}
\Prob\bigl[\W_0=\w_0,\W_1=\w_1,\dots,\W_{k}=\w_{k}\bigr]\\
& =   
\sum_{\w_0\in\mathcal{W}_0}
\Prob\bigl[\overline{X}_{\mathbf{e}_0}=v_0,\overline{X}_{\mathbf{e}_1}=v_1,\dots,\overline{X}_{\mathbf{e}_{k}}=v_{k}\bigr]\\
&= \sum_{\w_0\in\mathcal{W}_0} \Prob\left[
\begin{array}{c}
\overline{X}_{\mathbf{e}_0-1}\notin \mathrm{Int}_{3L_1} C(u_0), \overline{X}_{\mathbf{e}_0}=u_0y_0,\\
\forall j\in\{1,\dots,k\} \forall n_j\geq \e{j-1}:
\overline{X}_{n_j}\in\mathrm{Int}_{L_1} C(u_0\dots u_{j-1}),\\
\overline{X}_{\e{j}-1}\notin \mathrm{Int}_{3L_1} C(u_0\dots u_{j}),
\overline{X}_{\e{j}}=u_0u_1\dots u_jy_{j},\\
\forall n\geq \e{k}: \overline{X}_{n}\in\mathrm{Int}_{L_1} C(u_0\dots u_{k})
\end{array}
\right] \\
& = \sum_{\w_0\in\mathcal{W}_0} \widehat{G}(e,\w_0) \cdot \widehat{G}(\w_0,\w_1)\cdot  \widehat{G}(\w_1,\w_2)\cdot \ldots \cdot \widehat{G}(\w_k,\w_{k})\cdot \xi(\w_{k}).
\end{align*}
The last equation arises by splitting up the paths in the event
$[X_{\mathbf{e}_0}=x_0,\dots,X_{\mathbf{e}_{k}}=x_{k}\bigr]$
with respect to~their part between $X_{\mathbf{e}_{j-1}}$ and $X_{\mathbf{e}_{j}}$,
which is described by $\widehat{G}(\w_{j-1},\w_j)$.
Hence,
$$
\Prob\bigl[\W_{k+1}=\w_{k+1}\,\bigl|\, \W_1=\w_1,\dots,\W_k=\w_k\bigr]=
\frac{\xi(\w_{k+1})}{\xi(\w_{k})}\widehat{G}(\w_k,\w_{k+1}).
$$
Irreducibility follows from the construction of the coverings that each
covering has subcones of all types together with~(\ref{eq:support}).
\par
In order to show ergodicity of the process we prove positive recurrence and aperiodicity. First, we show that the process is positive recurrent. Due to irreducibility it is sufficient to show that $\w=(\bT,u,y)\in\mathcal{W}$ with $|u|\leq L$ can be reached from any $\w_0=(\bT_0,u_0,y_0)\in\mathcal{W}$ in four steps with positive probability bounded away from zero. For this purpose, take any $k\in\mathbb{N}$ and $x_0\in \mathrm{Cov}_k$ with $T(x_0)=\bT_0$. Then $x_0y_0\in\partial_{L_0}\mathrm{Int}_{3L_1}C(x_0)$ and, by Lemma \ref{lem:cover-dense}, there is some $x_2\in\mathrm{Cov}_{k+2}\cap C(x_0)$, which can be reached from $x_0y_0$ on a path inside $\mathrm{Int}_{L_1} C(x_0)$ of length at most $K_1$. Take any $u_3\in\mathrm{Cov}(T(x_2))$ such that $\ell(u_3)\leq L$ and $u\in \mathrm{Cov}(T(x_2u_3))$. Then one can walk from $x_0y_0$ via $x_2$, $x_2u_3$ and  $x_2u_3u$ to $x_2u_3uy$ on a path inside $\mathrm{Int}_{L_1}C(x_0)$. This yields $\Prob[\W_4=\w \mid |\W_0=\w_0]\geq \varepsilon_0^{K_1+2L+\ell(y)}$, which proves positive recurrence of $(\W_k)_{k\in\N}$ by a standard geometric distribution argument.
\par

\par
Aperiodicity is obtained as follows: if we start at some $\w\in\mathcal{W}$ then we can
  come back to $\w$ in four steps with positive probability. Take any $\hat
  \w\in\mathcal{W}$ with $q(\w,\hat \w)>0$. Analogously, starting at $\hat \w$ we
  can reach $\w$ in four steps with positive probability, yielding that we can also
  reach $\w$ with positive probability in five steps when starting at $\w$. This
  yields aperiodicity, and thus ergodicity.

\end{proof}

\subsection{Entropy of a hidden Markov chain related to the last entry time process}
\label{sec:entropy-HMC}

In this subsection we introduce a hidden Markov chain and consider its
asymptotic entropy, which will be linked with the entropy of $(\overline{X}_n)_{n\geq 0}$ in the next subsection.
\par
We define the function $\Phi:(\mathcal{W}\cup\mathcal{W}_0)\times \mathcal{W} \to \mfA_R\times
\mfA_R\times \bigcup_{\bT\in\mfA_R} \mathrm{Cov}(\bT)$ by
$$
\Phi\bigl((\bT_1,u_1,y_1),(\bT_2,u_2,y_2)\bigr)
:=\bigl(\bT_{1},\bT_2,u_2\bigr).
$$
This leads to the \textit{hidden Markov chain} $(\Y_k)_{k\geq 1}$ defined by
$$
\Y_k:=\Phi(\W_{k-1},\W_k).
$$
The random variables $\Y_k$, $k\in\mathbb{N}$, give information about the random walk's
way to infinity; they describe which cones of the covering are finally entered without keeping the information of the exact last entry points of the cones.  Given the value of
$\bT_{1}$ we know the relative position of the next subcone
described by
$(\bT_2,u_2)$. While
$(\W_{k-1},\W_k)_{k\geq 1}$ is a Markov chain, the
process $(\Y_k)_{k\geq 1}$ is in general not Markovian. 
\par
%
%
%
Recall that the Markov process $(\W_k)_{k\geq 1}$ is positive recurrent,
and therefore there exists a stationary probability measure $\nu$. Thus,
$(\W_k)_{k\geq 1}$ is asymptotically mean stationary and so is the hidden Markov chain $(\Y_k)_{k\geq 1}$.

We introduce an additional random variable $\Y_0$: if $\W_0=(\bT,u,y)\in\mathcal{W}_0$ then we set $\Y_0:=u$. In other words,
$\Y_0$ describes the  root $\mathcal{R}_0$ of the cone associated with the initial
last entry point $\overline{X}_{\e{0}}$, without keeping the information of the
exact location of $\overline{X}_{\e{0}}\in C(\mathcal{R}_0)$. This information of $\Y_0$ is, in addition to the values
of $\Y_1,\Y_2,\dots$, needed for tracking the random walk's route to infinity, since $\Y_1=(\mathbf{T}_0,\mathbf{T}_1,u_1)$
determines only the type $\bT_0$ of $C(\mathcal{R}_0)\in \mathrm{Cov}_0$, but
there may be several different cones of type $\bT_0$ in $\mathrm{Cov}_0$.

A generalized version of the famous Shannon-McMillan-Breiman theorem states
that then there exists a non-negative constant $H(\Y)\in[0,\infty]$ such that 

\begin{equation}\label{equ:shannon}
H(\Y)=\lim_{n\to\infty} -\frac{1}{n} \log \Prob\bigl[\Y_0=\mathfrak{y}_0,\dots,\Y_n=\mathfrak{y}_n\bigr]
\end{equation}
for almost every realisation $(\mathfrak{y}_0,\mathfrak{y}_1,\dots)$ of the process
$(\Y_k)_{k\geq 1}$; see Algoet and Cover \cite[Theorem~4]{AlCo:88}.
The number $H(\Y)$ is called the \textit{asymptotic entropy} of the process
$(\Y_k)_{k\geq 1}$. We will see later that $H(\Y)$ is finite and prove now that $H(\Y)$ is strictly positive.
\begin{lem}
$H(\Y)>0$.
\end{lem}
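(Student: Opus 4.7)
My plan is to prove $H(\Y)>0$ by establishing a uniform exponential upper bound
$$
\Prob[\Y_0=\mathfrak{y}_0, \ldots, \Y_n=\mathfrak{y}_n] \leq C_0\,\lambda^{c_0 n}
$$
(independent of the realisation $(\mathfrak{y}_0,\ldots,\mathfrak{y}_n)$), and then plugging into the Shannon--McMillan--Breiman identity~(\ref{equ:shannon}) to conclude $H(\Y)\geq -c_0\log\lambda>0$. No further entropy-theoretic inequality is needed.

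For the first step I would exploit the $1$-to-$1$-correspondence already used in Lemma~\ref{lem:W is ergodic}: a realisation $(\mathfrak{y}_0,\ldots,\mathfrak{y}_n)$ determines the covering roots $\mathcal{R}_0,\mathcal{R}_1,\ldots,\mathcal{R}_n$ via $\mathcal{R}_k=\mathcal{R}_{k-1}u_k$, where $u_k$ is the third coordinate of $\mathfrak{y}_k$. Since $\mathcal{R}_k\in C(\mathcal{R}_{k-1})$ we have $d(e,\mathcal{R}_n) = d(e,\mathcal{R}_0)+\sum_{k=1}^n\ell(u_k)$. Property~(1) of Proposition~\ref{prop:covering construction} forces $\mathcal{R}_{k-1}u_k\in\mathrm{Int}_{3L_1} C(\mathcal{R}_{k-1})$; combined with the trivial fact that $\mathcal{R}_{k-1}\neq e$ admits a neighbour outside its own cone (any $s\in S$ with $\ell(\mathcal{R}_{k-1}s)=\ell(\mathcal{R}_{k-1})-1$), this gives the uniform lower bound $\ell(u_k)\geq 3L_1$. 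Hence $d(e,\mathcal{R}_n)\geq 3L_1\,n$.

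For the second step, note that the event $\{\Y_0=\mathfrak{y}_0,\ldots,\Y_n=\mathfrak{y}_n\}$ forces the retracted walk to lie in $C(\mathcal{R}_n)$ from time $\e{n}$ onwards; since $e\notin C(\mathcal{R}_n)$, the walk must first visit $\partial C(\mathcal{R}_n)$. The crucial geometric input is Lemma~\ref{lem:boundary-rays}: the boundary $\partial C(\mathcal{R}_n)$ is contained in the union of two geodesic rays starting at $\mathcal{R}_n$, so every $v\in\partial C(\mathcal{R}_n)$ satisfies $d(e,v)=d(e,\mathcal{R}_n)+d(\mathcal{R}_n,v)$. Combining a union bound over first-visit probabilities (bounded by Green functions) with the estimate~(\ref{equ:Green-estimate}) then gives
$$
\Prob[\Y_0=\mathfrak{y}_0,\ldots,\Y_n=\mathfrak{y}_n] \leq \sum_{v\in\partial C(\mathcal{R}_n)}\overline{G}(e,v) \leq \frac{2C}{1-\lambda}\,\lambda^{d(e,\mathcal{R}_n)} \leq \frac{2C}{1-\lambda}\,\lambda^{3L_1 n}.
$$

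Substituting into~(\ref{equ:shannon}) yields $H(\Y)\geq -3L_1\log\lambda>0$. The argument offers no real obstacle: the only non-cosmetic ingredients are Proposition~\ref{prop:covering construction}, which pushes $\mathcal{R}_n$ away from $e$ at a deterministic linear rate, and Lemma~\ref{lem:boundary-rays}, which provides the one-dimensional geometry of cone boundaries and hence the summability of the Green function over $\partial C(\mathcal{R}_n)$.
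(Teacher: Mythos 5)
Your proof is correct in substance but takes a genuinely different route from the paper's. The paper's argument is purely information-theoretic: since $\mfA_R$ is not a circle, given $(\W_1,\W_2)$ there exist at least two possible outcomes for $\Y_3$ with positive conditional probability, so $H(\Y_3\mid\W_1,\W_2,\Y_2)>0$, and then the chain rule from Cover--Thomas gives $H(\Y)\geq H(\Y_3\mid\W_1,\W_2,\Y_2)>0$. You instead prove a uniform exponential decay bound $\Prob[\Y_0=\mathfrak{y}_0,\dots,\Y_n=\mathfrak{y}_n]\leq C'\lambda^{3L_1 n}$ and feed it into~(\ref{equ:shannon}). What your approach buys is an \emph{explicit} lower bound $H(\Y)\geq -3L_1\log\lambda$, tied directly to the spectral-radius constant $\lambda$ from~(\ref{equ:Green-estimate}); what it costs is that it imports more machinery from Sections~\ref{sec:cone-coverings} and~\ref{sec:entropy-retracted} (Green function decay, cone boundary geometry), whereas the paper's argument needs only that the transition kernel $q$ of $(\W_k)$ is genuinely branching.

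One point to tighten. You assert the walk ``must first visit $\partial C(\mathcal{R}_n)$,'' but the walk has bounded range $L_0$, so it may hop over $\partial C(\mathcal{R}_n)$ without landing exactly on it; what it must visit is $\partial_{L_0}C(\mathcal{R}_n)$ (the first time the walk lands in $C(\mathcal{R}_n)$, the previous state was outside, so the landing point lies within distance $L_0$ of the complement). This is a cosmetic repair: by Lemma~\ref{lem:boundary-rays} and exponential decay of the Green function, $\sum_{v\in\partial_{L_0}C(\mathcal{R}_n)}\overline{G}(e,v)\leq C''\lambda^{d(e,\mathcal{R}_n)}$ still holds with a constant depending on $|\cB_{L_0}(e)|$ but not on $\mathcal{R}_n$, exactly as in the proof of Lemma~\ref{lemma:sum-L-finite}. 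The rest of your chain of estimates --- the deduction $\ell(u_k)\geq 3L_1$ from Property~1 of Proposition~\ref{prop:covering construction} via an element $\mathcal{R}_{k-1}s\notin C(\mathcal{R}_{k-1})$ at distance~$1$, and the additivity $d(e,\mathcal{R}_n)=d(e,\mathcal{R}_0)+\sum_{k=1}^n\ell(u_k)$ from the nested cone structure --- is sound.
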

\begin{proof}
Take any $\w_1,\w_2\in\mathcal{W}$ with
$\Prob[\W_1=\w_1,\W_2=\w_2]>0$.
The values $\w_1,\w_2$ determine the value of $\Y_2$ uniquely. Since $W$ grows
exponentially, the subgraph $\mfA_R$ is not a circle. Therefore, the
construction of coverings ensures that
 there are at least two elements $\w',\w''\in\mathcal{W}$ with
$q(\w_2,\w')>0$, $q(\w_2,\w'')>0$ and $\Phi(\w_2,\w') \neq
\Phi(\w_2,\w'')$. Then:
\begin{align*}
&\Prob\bigl[\Y_3=\Phi(\w_2,\w') \,\bigl|\,
\W_1=\w_1,\W_2=\w_2\bigr]\geq q(\w_2,\w')>0,
\\
& \Prob\bigl[\Y_3=\Phi(\w_2,\w'') \,\bigl|\,
\W_1=\w_1,\W_2=\w_2\bigr]\geq q(\w_2,\w'')>0.
\end{align*}
Thus, $P(\w' \mid \w_1,\w_2):=\Prob\bigl[\Y_3=\Phi(\w_2,\w') \,\bigl|\,
\W_1=\w_1,\W_2=\w_2)\bigr]<1$. 
\par
Recall that the conditional entropy of discrete random variables $A_1$ and
$A_2$ on a state space $\mathcal{S}_0$ is defined as
$$
H(A_{2} \mid A_{1}) := -\sum_{a_{1},a_2\in\mathcal{S}_0}
\Prob[A_{1}=a_1,A_{2}=a_2]\log \Prob\bigl[A_{2}=a_2 \mid A_{1}=a_{1}\bigr]
$$
and analogously for more variables.
From Cover and Thomas \cite[Theorem 4.5.1]{cover-thomas} follows then
\begin{eqnarray*}
H(\Y) &\geq &   H\bigl(\Y_3 \bigl|\mathbf{W}_1,\mathbf{W}_2,\Y_2\bigr)\\
&\geq&  \nu(\w_1)q(\w_1,\w_2) P(\w' \mid \w_1,\w_2)\cdot \log P(\w' \mid \w_1,\w_2) >0,
\end{eqnarray*}
where $\nu$ is the invariant probability measure of $(\W_k)_{k\geq 1}$.
This yields the claim.
\end{proof}

\subsection{Rate of escape with respect to the Green distance}
\label{sec:entropy-retracted}

We define a new ``length function'' such that $H(\Y)$ becomes the rate of
escape with respect to this length function. Let be $w_0\in W$ and consider the cone $C(w_0)$ rooted at $w_0$. We define 
$$
l(w_0) := -\log \sum_{v\in\partial_{L_0} \mathrm{Int}_{3L_1} C(w_0)} \overline{G}(e,v).
$$
The next lemma shows that $l(x_0)$ is well-defined.
\begin{lem}\label{lemma:sum-L-finite}
Let be $w_{0}\in W$. Then
$$
\sum_{v\in\partial_{L_0} \mathrm{Int}_{3L_1} C_{}(w_0)} \overline{G}(e,v)<\infty.
$$
\end{lem}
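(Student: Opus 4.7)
The plan is to combine the exponential decay of the Green function from (\ref{equ:Green-estimate}) with a geometric description of $\partial_{L_0}\mathrm{Int}_{3L_1}C(w_0)$ obtained from Lemma~\ref{lem:boundary-rays}, and to bound the sum by a convergent geometric series.

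First, recall from (\ref{equ:Green-estimate}) that there exist $\lambda\in(0,1)$ and $C>0$ with $\overline{G}(e,v)\leq C\lambda^{d(e,v)}$ for all $v\in W$. Since every $v$ appearing in the sum lies in the cone $C(w_0)$, we have $d(e,v)=d(e,w_0)+d(w_0,v)$, so
$$
\sum_{v\in\partial_{L_0}\mathrm{Int}_{3L_1}C(w_0)}\overline{G}(e,v)\;\leq\;C\lambda^{d(e,w_0)}\sum_{v\in\partial_{L_0}\mathrm{Int}_{3L_1}C(w_0)}\lambda^{d(w_0,v)}.
$$
It therefore suffices to control the number of elements of $\partial_{L_0}\mathrm{Int}_{3L_1}C(w_0)$ at each distance $n$ from $w_0$.

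The main geometric input is Lemma~\ref{lem:boundary-rays}: the boundary $\partial C(w_0)$ lies in the union of two geodesic rays $\gamma_1,\gamma_2$ emanating from $w_0$. By definition, any $v\in\partial_{L_0}\mathrm{Int}_{3L_1}C(w_0)$ lies within distance $L_0$ of some element $v'\in W\setminus\mathrm{Int}_{3L_1}C(w_0)$, and $v'$ in turn is within distance $3L_1$ of some $v''\in\partial C(w_0)\subseteq\gamma_1\cup\gamma_2$. Hence $v$ is within distance $K:=L_0+3L_1$ of $\gamma_1\cup\gamma_2$. If $d(w_0,v)=n$ and $d(v,\gamma_i(m))\leq K$, then the triangle inequality forces $|m-n|\leq K$, so $v$ belongs to the ball of radius $K$ around one of at most $2(2K+1)$ specific group elements. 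Since the Coxeter group $(W,S)$ is locally finite, each such ball has cardinality at most $|\cB_K(e)|$; hence there is a constant $N=N(K)$, independent of $n$ and of $w_0$, with
$$
\bigl|\{v\in\partial_{L_0}\mathrm{Int}_{3L_1}C(w_0)\mid d(w_0,v)=n\}\bigr|\leq N.
$$

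Putting these estimates together yields
$$
\sum_{v\in\partial_{L_0}\mathrm{Int}_{3L_1}C(w_0)}\overline{G}(e,v)\;\leq\;C\lambda^{d(e,w_0)}\sum_{n\geq 0}N\lambda^{n}\;=\;\frac{CN\lambda^{d(e,w_0)}}{1-\lambda}\;<\;\infty,
$$
which proves the lemma. The only genuinely geometric step is the identification of $\partial_{L_0}\mathrm{Int}_{3L_1}C(w_0)$ as lying in a bounded neighborhood of the two boundary rays of the cone; everything else is a routine application of the exponential Green function bound and local finiteness of $W$.
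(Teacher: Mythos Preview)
Your proof is correct and follows essentially the same strategy as the paper: both use the exponential Green function decay~(\ref{equ:Green-estimate}), the factorisation $d(e,v)=d(e,w_0)+d(w_0,v)$ for $v\in C(w_0)$, and Lemma~\ref{lem:boundary-rays} to conclude that $\partial_{L_0}\mathrm{Int}_{3L_1}C(w_0)$ lies in a bounded neighbourhood of two geodesic rays, reducing the sum to a convergent series. The paper formalises the geometric step via the inclusion~(\ref{equ:boundary-inclusion}) and ends with the bound $\sum_{n\geq 0}(n+3L_1+L_0)\lambda^n$, whereas your sphere-by-sphere count gives the slightly sharper constant bound $N$ per radius; this is a cosmetic difference only. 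One small point worth making explicit: when you pass from $v'\in W\setminus\mathrm{Int}_{3L_1}C(w_0)$ to $d(v',\partial C(w_0))\leq 3L_1$, you implicitly use that $v'\in C(w_0)$ (hence $v'\in\partial_{3L_1}C(w_0)$), which holds because $v\in\mathrm{Int}_{3L_1}C(w_0)$ forces every point within distance $L_0<3L_1$ of $v$ to lie in $C(w_0)$.
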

\begin{proof}
We use the fact that the boundary of a cone only grows linearly (see Lemma
\ref{lem:boundary-rays}) and that
\begin{equation}\label{equ:boundary-inclusion}
\partial_{L_{0}} \mathrm{Int}_{3L_1} C(w_0) \cap \cB_{n}(w_{0})\subseteq \bigcup_{u\in\partial C_{}(w_0)  \cap \cB_{n+3L_1+L_0}(w_{0})} \cB_{3L_{1}+L_{0}}(u).
\end{equation}
Using the fact that the Green functions decay exponentially (see (\ref{equ:Green-estimate})) we get that
\begin{align*}
\sum_{v\in\partial_{L_0} \mathrm{Int}_{3L_1} C(w_0)} \overline{G}(e,v) &\leq 
C\cdot \sum_{v\in\partial_{L_0} \mathrm{Int}_{3L_1} C(w_0)}
\lambda^{d(e,v)}\\ 
&= C\cdot \sum_{v\in\partial_{L_0} \mathrm{Int}_{3L_1} C(w_0)}
\lambda^{d(e,w_{0})+d(w_{0},v)} \\
&\leq  2C|\cB_{3L_1+L_0}(e)| \lambda^{d(e,w_{0})} \sum_{n\geq 0} (n+3L_1+L_0) \lambda^n<\infty.
\end{align*}
\end{proof}

We introduce some further notation and generating functions. For any $\bT\in\mfA_R$,
we write $v\in\partial_{L_0}\mathrm{Int}_{3L_1} C(\bT)$ if, for any
(or equivalently, for all) $w\in W$ with $\T(w)=\bT$, we have
$wv\in\partial_{L_0}\mathrm{Int}_{3L_1} C(w)$. For any $\bT\in\mfA_R$ and $v\in \partial_{L_0}\mathrm{Int}_{3L_1} \bT$, define
$$
\widetilde{G}(v,\partial_{L_0}\mathrm{Int}_{3L_1} \bT):=\sum_{n\geq 0}
\Prob_{wv}\bigl[\overline{X}_n\in \partial_{L_0}\mathrm{Int}_{3L_1}C(w),\forall
m<n: \overline{X}_m\in \mathrm{Int}_{L_1}C(w) \bigr],
$$
where $w\in W$ with $\T(w)=\bT$. This definition is independent from the
specific choice of $w$ since we consider only paths inside the $L_1$-interior of a cone of type
$\bT$. Similar to Lemma \ref{lemma:sum-L-finite} one shows that the values
  $\widetilde{G}(v,\partial_{L_0}\mathrm{Int}_{3L_1} \bT)$ are uniformly
  bounded.
\begin{lemma}\label{lem:tilde-G-bounded}
There is some constant $\tilde C>0$ such that
$\widetilde{G}(v,\partial_{L_0}\mathrm{Int}_{3L_1} \bT)<\tilde C$ for all $\bT\in\mfA_R$ and $v\in \partial_{L_0}\mathrm{Int}_{3L_1} \bT$.
\end{lemma}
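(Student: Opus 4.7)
The plan is to mimic the strategy of Lemma \ref{lemma:sum-L-finite}, but this time extracting uniformity in both $\bT$ and $v$. First I would drop the restriction to paths that stay in $\mathrm{Int}_{L_1}C(w)$ and the constraint on intermediate steps, bounding
$$
\widetilde G(v,\partial_{L_0}\mathrm{Int}_{3L_1}\bT) \;\leq\; \sum_{n\geq 0}\sum_{u\in \partial_{L_0}\mathrm{Int}_{3L_1}C(w)}\overline p^{(n)}(wv,u)\;=\;\sum_{u\in \partial_{L_0}\mathrm{Int}_{3L_1}C(w)}\overline G(wv,u),
$$
for any fixed $w\in W$ with $T(w)=\bT$. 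Since there are only finitely many cone types in $\mfA_R$, uniformity in $\bT$ is automatic once uniformity of this sum in $v$ (equivalently in the starting point $wv$) is established.

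Next I would apply the exponential decay of the Green function (\ref{equ:Green-estimate}) to get $\overline G(wv,u)\leq C\lambda^{d(wv,u)}$, a bound that depends only on the distance from $wv$ to $u$, and crucially \emph{not} on $d(e,wv)$. The key geometric ingredient is then Lemma \ref{lem:boundary-rays}: $\partial C(w)$ is contained in the union of two geodesic rays $\gamma_1,\gamma_2$ starting at $w$, so via the inclusion (\ref{equ:boundary-inclusion}) the set $\partial_{L_0}\mathrm{Int}_{3L_1}C(w)$ sits inside a tubular neighborhood of $\gamma_1\cup \gamma_2$ of bounded width $3L_1+L_0$.

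The counting step is the heart of the argument. For any fixed point $p\in W$, the distance function $k\mapsto d(p,\gamma_i(k))$ along a geodesic ray $\gamma_i$ takes each value at most twice, so $|\{u\in\gamma_i:d(p,u)=n\}|\leq 2$, independently of $p$ and $n$. Combining this with the bounded width $3L_1+L_0$ of the tube yields a uniform constant $A=A(L_0,L_1)$ with
$$
\bigl|\{u\in \partial_{L_0}\mathrm{Int}_{3L_1}C(w):d(wv,u)=n\}\bigr|\;\leq\; A,
$$
for every $wv$. Therefore
$$
\sum_{u\in \partial_{L_0}\mathrm{Int}_{3L_1}C(w)}\overline G(wv,u)\;\leq\; CA\sum_{n\geq 0}\lambda^n\;=\;\frac{CA}{1-\lambda},
$$
which is the desired uniform bound $\tilde C$.

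The main obstacle here is precisely the uniformity in $v$: unlike in Lemma \ref{lemma:sum-L-finite}, where summing from the origin $e$ gave an extra $\lambda^{d(e,w_0)}$ factor to absorb boundary growth, here the starting point $wv$ can lie right next to the boundary $\partial C(w)$, so no such decay factor is available. Everything has to come from the linear (really, essentially bounded-per-sphere) growth of the boundary of $C(w)$, which is exactly what Lemma \ref{lem:boundary-rays} provides in the Fuchsian setting.
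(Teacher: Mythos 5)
Your proof is essentially correct and, since the paper only remarks that the argument is ``similar to Lemma~\ref{lemma:sum-L-finite}'' without giving details, you have supplied what I believe is the intended argument. Your diagnosis of the key difficulty is precisely right: the telescoping identity $d(e,v)=d(e,w_0)+d(w_0,v)$ that powers the earlier lemma is unavailable because $wv$ may sit right on the boundary, so the missing decay factor must be replaced by uniform control of how the tube $\partial_{L_0}\mathrm{Int}_{3L_1}C(w)$ meets spheres around $wv$. The chain $\widetilde G\leq \sum_u \overline G(wv,u)\leq C\sum_u\lambda^{d(wv,u)}$ followed by a counting estimate and the geometric series is exactly the right skeleton, and the appeal to Lemma~\ref{lem:boundary-rays} plus the inclusion (\ref{equ:boundary-inclusion}) to confine the boundary to a width-$(3L_1+L_0)$ tube about two geodesic rays is the correct geometric input.

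One imprecision worth fixing: the claim that $k\mapsto d(p,\gamma_i(k))$ takes each value \emph{at most twice} is exact only in trees. In a $\delta$-hyperbolic space it holds up to an additive error in $\delta$: if $k_0$ is a nearest-point projection of $p$ onto $\gamma_i$ and $m=d(p,\gamma_i(k_0))$, then $m+|k-k_0|-C\delta\leq d(p,\gamma_i(k))\leq m+|k-k_0|$, so the preimage of any value $n$ under this function has cardinality at most $2C\delta+2$, \emph{uniformly} in $p$, $n$, and the ray. This is all you actually need, and it is independent of the center $wv$, which is the crucial uniformity; but your constant $A$ should be stated to depend also on the hyperbolicity constant $\delta$ (in addition to $L_0$, $L_1$ and the local finiteness of the Cayley graph). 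With that correction the bound $\bigl|\{u\in\partial_{L_0}\mathrm{Int}_{3L_1}C(w):d(wv,u)=n\}\bigr|\leq A$ holds uniformly and the rest of the argument goes through verbatim, giving $\tilde C = CA/(1-\lambda)$ independent of $\bT$ and $v$.
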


\begin{prop}\label{prop:hat-l-convergence}
$$
\lim_{k\to\infty} \frac{l(\mathcal{R}_k)}{k}=H(\Y)\quad \textrm{ almost surely}. 
$$
\end{prop}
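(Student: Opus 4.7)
The strategy is to rewrite $l(\mathcal{R}_k)$ (up to bounded additive error) as $-\log \Prob[\Y_0,\ldots,\Y_k]$ and then apply the Shannon--McMillan--Breiman identity (\ref{equ:shannon}). Write $A_k:=\partial_{L_0}\mathrm{Int}_{3L_1}C(\mathcal{R}_k)$, so that $l(\mathcal{R}_k)=-\log\sum_{v\in A_k}\overline{G}(e,v)$. The plan is to prove, with constants uniform in $k$ and in the realization, the sandwich
\begin{equation}\label{plan:sandwich}
c_1\,\Prob\bigl[\mathcal{R}_k=w_k\bigr]\;\le\;\sum_{v\in A_k}\overline{G}(e,v)\;\le\;c_2\,\Prob\bigl[\mathcal{R}_k=w_k\bigr],
\end{equation}
and then to observe that the event $\{\mathcal{R}_k=w_k\}$ coincides with $\{\Y_0=u_0,\Y_1=\mathfrak{y}_1,\ldots,\Y_k=\mathfrak{y}_k\}$ for the unique tuple $(u_0,\mathfrak{y}_1,\ldots,\mathfrak{y}_k)$ determined by $w_k$.

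The bijection $\mathcal{R}_k \leftrightarrow (\Y_0,\ldots,\Y_k)$ will come directly from the construction of the coverings. By Proposition~\ref{prop:covering construction} the cones $\{C(wu):u\in\mathrm{Cov}(w)\}$ are pairwise disjoint, so by induction the cones $\{C(w):w\in\mathrm{Cov}_k\}$ are pairwise disjoint and each $w_k\in\mathrm{Cov}_k$ has a unique predecessor $w_{k-1}\in\mathrm{Cov}_{k-1}$ with $C(w_k)\subseteq C(w_{k-1})$. Iterating, $w_k$ determines $u_0,u_1,\ldots,u_k$ (hence $\mathcal{R}_j$ and all cone types for $j\le k$) uniquely, which is exactly the information encoded by $\Y_0,\Y_1,\ldots,\Y_k$. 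Hence $\Prob[\Y_0=u_0,\ldots,\Y_k=\mathfrak{y}_k]=\Prob[\mathcal{R}_k=w_k]$.

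For the upper bound in (\ref{plan:sandwich}), note that by the bounded-range assumption (\ref{eq:L0}) and the definition of $\mathcal{R}_k$, if $\mathcal{R}_k=w_k$ then $\overline{X}_{\e{k}}\in A_k$, so $\Prob[\mathcal{R}_k=w_k]\le F_k:=\Prob[\text{walk visits }A_k]$. On the other hand, decomposing the Green mass at the first visit to $A_k$,
\[
\sum_{v\in A_k}\overline{G}(e,v)\;=\;F_k\cdot\mathbb{E}\!\left[\,\sum_{v\in A_k}\overline{G}(V^\ast,v)\,\Big|\,\text{walk hits }A_k\right],
\]
where $V^\ast\in A_k$ is the first hitting point. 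Using the exponential decay (\ref{equ:Green-estimate}) together with the fact that $A_k$ is contained in a bounded neighbourhood of the two boundary geodesics of $C(\mathcal{R}_k)$ (Lemma~\ref{lem:boundary-rays}), the inner sum is uniformly bounded (analogously to the proof of Lemma~\ref{lemma:sum-L-finite}), giving $\sum_{v\in A_k}\overline{G}(e,v)\le c_2\,\Prob[\mathcal{R}_k=w_k]$. For the lower bound, let $V^\ast$ again be the first point of $A_k$ visited. Since $V^\ast\in\mathrm{Int}_{3L_1}C(\mathcal{R}_k)$, Lemma~\ref{lem:cone non intersection} yields $C(V^\ast)\subseteq\mathrm{Int}_{L_1}C(\mathcal{R}_k)$, and by Lemma~\ref{lem:stayincone} the walk stays in $C(V^\ast)$ from then on with probability at least $\overline{p}_{\mathrm{esc}}$; applying the strong Markov property at the first-hit time of $A_k$ gives $\Prob[\mathcal{R}_k=w_k]\ge\overline{p}_{\mathrm{esc}}\cdot F_k$. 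Combined with $\sum_{v\in A_k}\overline{G}(e,v)\ge F_k$ (at least one visit), this yields the lower bound in (\ref{plan:sandwich}).

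Taking $-\log$ of (\ref{plan:sandwich}) and dividing by $k$, the constants $c_1,c_2$ contribute $O(1/k)$ and vanish, so
\[
\lim_{k\to\infty}\frac{l(\mathcal{R}_k)}{k}\;=\;\lim_{k\to\infty}-\frac{1}{k}\log\Prob[\Y_0,\ldots,\Y_k]\;=\;H(\Y)\quad\text{a.s.},
\]
the last equality being (\ref{equ:shannon}). The main technical hurdle is the upper bound in (\ref{plan:sandwich}): one must show that starting from any point in $A_k$, the expected number of returns to $A_k$ is bounded uniformly in $k$ and in the base cone, which requires combining exponential decay of the Green function with the (at most) linear growth of the boundary set $A_k$ inside balls.
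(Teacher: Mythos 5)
Your approach is correct in substance but genuinely different from the paper's. The paper establishes the identity by a telescoping last-entry decomposition of $\sum_{v\in A_k}\overline{G}(e,v)$ along \emph{all} of the nested boundaries $\partial_{L_0}\mathrm{Int}_{3L_1}C(r_j)$, $j=0,\ldots,k$, producing an explicit product formula in terms of $\widehat{G}$ and $\widetilde{G}$; the Markov transition probabilities $q(\cdot,\cdot)$ are then recovered by inserting the $\xi$ factors, and uniform boundedness of $\xi$ and $\widetilde{G}$ kills the error after dividing by $k$. You instead decompose only at the \emph{first} hit of the final boundary set $A_k$, and replace the telescoping by the two-sided sandwich $\exp(-l(\mathcal{R}_k))\asymp\Prob[\mathcal{R}_k=w_k]$, obtained from (a) the Green-function/first-visit identity $\sum_{v\in A_k}\overline{G}(e,v)=F_k\cdot\E[\cdots]$ and a uniform bound on expected returns to $A_k$, and (b) the inequality $\overline{p}_{\mathrm{esc}}F_k\le\Prob[\mathcal{R}_k=w_k]\le F_k$ via Lemma~\ref{lem:stayincone} and the strong Markov property; the clean observation that $\{\mathcal{R}_k=w_k\}=\{\Y_0,\ldots,\Y_k\text{ fixed}\}$ then closes the argument via (\ref{equ:shannon}). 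Your route is shorter and avoids the bookkeeping of nested conditioning; the paper's product formula is the more structured object (and is reused implicitly when relating $\widehat{G}$ to the chain transition probabilities), but for this convergence statement it is not needed.

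One correction to the bookkeeping: you have paired the four inequalities with the wrong sides of the sandwich. The upper bound $\sum_{v\in A_k}\overline{G}(e,v)\le c_2\Prob[\mathcal{R}_k=w_k]$ requires $\sum\le\tilde{C}F_k$ \emph{together with} $\Prob[\mathcal{R}_k=w_k]\ge\overline{p}_{\mathrm{esc}}F_k$ (so that $F_k\le\overline{p}_{\mathrm{esc}}^{-1}\Prob[\mathcal{R}_k=w_k]$), not with $\Prob[\mathcal{R}_k=w_k]\le F_k$ as you state. Symmetrically, the lower bound $\sum\ge c_1\Prob[\mathcal{R}_k=w_k]$ follows from $\sum\ge F_k$ together with $\Prob[\mathcal{R}_k=w_k]\le F_k$ (giving $c_1=1$), not from the escape-probability inequality. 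All four estimates appear in your plan, so this is a transposition error rather than a missing ingredient, but as written neither paragraph's conclusion actually follows from the inequalities cited within it. It is also worth spelling out that your uniform bound on $\sum_{v\in A_k}\overline{G}(V^\ast,v)$ is a slightly stronger statement than Lemma~\ref{lem:tilde-G-bounded} (which restricts to paths inside $\mathrm{Int}_{L_1}C(w)$), but it is proved by the same mechanism: the decay (\ref{equ:Green-estimate}) is $W$-invariant in the distance variable, $A_k$ lies in a bounded neighbourhood of the two boundary rays of $C(\mathcal{R}_k)$, and a geodesic ray meets any ball $\cB_n(V^\ast)$ in at most $2n+1$ points, so $|A_k\cap\cB_n(V^\ast)|$ grows linearly in $n$ with a constant depending only on the (finitely many) cone types.
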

\begin{proof}
Consider a realization of the random walk $(\overline{X}_n)_{n\geq 0}$, where the instances of $\W_k$ are given
by $(\bT_k,u_k,v_k)$.  We set $r_j:=u_0\dots u_j$ for $j\geq 0$. Thus, $C(r_j)$
is the cone associated with $\overline{X}_{\e{j}}$,
\emph{i.e.}, $\overline{X}_{\e{j}}$ is in
$\partial_{L_0}\mathrm{Int}_{3L_1}C(r_j)$.
\par
Each path from $e$ to
$\overline{X}_{\e{k}}$ must successively pass through the boundaries
$\partial_{L_0}\mathrm{Int}_{3L_1} C(r_j)$ for every $j=0,\dots,k$. By conditioning on the last entry points of these paths 
and using the fact that $\widetilde{G}(v_k,\partial_{L_0}\mathrm{Int}_{3L_1} \bT_k)$ is uniformly bounded
we get for almost every realisation $(r_0,r_1,\dots)$ of
$(\mathcal{R}_{k})_{k\geq 0}$ (which are implied by the realizations
$(\bT_k,u_k,v_k)$ of $\W_k$)
that $l(r_{k})$ equals
$$
\sum_{\substack{\bar v_0,\bar
    v_1,\dots,\bar v_k\in W:\\
    \bar v_i\in\partial_{L_0}\mathrm{Int}_{3L_1} C(r_i)}} \widehat{G}\bigl(e,(\bT_0,u_0,\bar
v_0)\bigr) \prod_{j=1}^k \widehat{G}\bigl((\bT_{j-1},u_{j-1},\bar
v_{j-1}),(\bT_j,u_j,\bar v_j)\bigr)\cdot \widetilde{G}(\bar
v_k,\partial_{L_0}\mathrm{Int}_{3L_1} \bT_k).
$$
We note that, by definition, we have $\Prob[\W_0=\w_0]=\widehat{G}(e,\w_0)\xi(\w_0)$ for $\w_0\in\mathcal{W}_0$. Recall that $1\geq \xi(\w)\geq C_0>0$
for all $\w\in\mathcal{W}$ by Lemma \ref{lemma:xi>0}. Together with
Lemma \ref{lem:tilde-G-bounded} we get the following convergence for almost
every realization $(r_k)_{k\geq 0}$ of $(\mathcal{R}_{k})_{k\geq 0}$:
\begin{eqnarray*}
&&\lim_{k\to\infty} \frac{l(r_k)}{k}\\
&=& \lim_{k\to\infty} -\frac{1}{k} \log \sum_{\substack{\bar v_0,\bar
    v_1,\dots,\bar v_k\in W:\\
    \bar v_i\in\partial_{L_0}\mathrm{Int}_{3L_1} C(r_i)}} \widehat{G}\bigl(e,(\bT_0,u_0,\bar
v_0)\bigr) \xi(\bT_0,u_0,\bar v_0)\cdot\\
&&\quad\quad\quad\quad\quad\quad\quad\quad  \cdot \prod_{j=1}^k \frac{\xi(\bT_{j},u_{j},\bar
  v_{j})}{\xi(\bT_{j-1},u_{j-1},\bar v_{j-1})} \widehat{G}\bigl((\bT_{j-1},u_{j-1},\bar v_{j-1}),(\bT_j,u_j,\bar v_j)\bigr)\\
&=&\lim_{k\to\infty} -\frac{1}{k} \log \sum_{\substack{\bar v_0,\bar
    v_1,\dots,\bar v_k\in W:\\
    \bar v_i\in\partial_{L_0}\mathrm{Int}_{3L_1} C(r_i)}} \Prob[\W_{0}=(\bT_0,u_0,\bar v_0)]
\prod_{j=1}^k q\bigl((\bT_{j-1},u_{j-1},\bar v_{j-1}),(\bT_j,u_j,\bar v_j)\bigr)\\
&=&\lim_{k\to\infty} -\frac{1}{k} \log \Prob\bigl[\Y_0=u_0,
\Y_1=(\bT_0,\bT_1,u_1),\dots,\Y_k=(\bT_{k-1},\bT_k,u_k)\bigr] =H(\Y). 
\end{eqnarray*}
%
%
\end{proof}

We write $\mathbb{E}_\nu$ if we replace the
original initial distribution of $\W_1$ by $\nu$. An application of the ergodic theorem for
positive recurrent Markov chains together with Lemma \ref{lem:last-entry-exponential-moments} yields the following lemma.

\begin{lemma}\label{lem:e_k/k-convergence}
$$
\frac{\e{k}}{k}  \quad \xrightarrow{k\to\infty} \quad \E_{\nu}[\e{2}-\e{1}]<\infty
\quad \textrm{ almost surely}. 
$$
\end{lemma}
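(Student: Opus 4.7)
The plan is to telescope $\e{k}$ as $\e{k} = \e{0} + \sum_{j=1}^{k}(\e{j}-\e{j-1})$ and then appeal to the Markov chain ergodic theorem for $(\W_k)_{k\geq 1}$. Since $\e{0}$ is almost surely finite by Lemma~\ref{lem:last-entry-exponential-moments}, the term $\e{0}/k$ vanishes almost surely, so the whole task reduces to showing
$$
\frac{1}{k}\sum_{j=1}^{k}(\e{j}-\e{j-1}) \xrightarrow{k\to\infty} \E_\nu[\e{2}-\e{1}]\quad\text{a.s.}
$$
and verifying that this expectation is finite.

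The finiteness is immediate: by Lemma~\ref{lem:last-entry-exponential-moments} the increments $\e{j}-\e{j-1}$ have uniform exponential moments $\E[e^{\lambda_\mathbf{e}(\e{j}-\e{j-1})}]\leq K_\mathbf{e}$, and this bound persists under the stationary law $\nu$ (since the Markov transition kernel $q$ is the same, and the stationary expectation is a convex combination of the conditional ones); in particular $\E_\nu[\e{2}-\e{1}]<\infty$.

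For the ergodic part, the main conceptual point to address is that $T_j:=\e{j}-\e{j-1}$ is not a function of $\W_j$ alone: it depends on the random walk trajectory between the successive last entry times. I would handle this by enlarging the state space. First, observe that, conditionally on $\W_{j-1}=\w$ and on the segment of the walk after time $\e{j-1}$ staying in the appropriate cone, the evolution of the random walk until time $\e{j}$, and in particular the distribution of $T_j$ together with $\W_j$, depends only on $\w$ (via Proposition~\ref{prop:invariance} and the construction of $\widehat G$). Thus the enriched sequence $(\W_{j-1},\W_j,T_j)_{j\geq 1}$ is a Markov chain on a countable state space whose projection onto the first coordinate is the positive recurrent ergodic chain of Lemma~\ref{lem:W is ergodic}. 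Its invariant measure is $\tilde\nu(\w,\w',t)=\nu(\w)q(\w,\w')\Prob[T=t\mid \W_{j-1}=\w,\W_j=\w']$, which has finite expectation of the third coordinate by the exponential moment bound.

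Applying the strong law of large numbers for positive recurrent Markov chains (valid under any initial distribution) to the coordinate projection $(\w,\w',t)\mapsto t$ on this enriched chain yields
$$
\frac{1}{k}\sum_{j=1}^{k}T_j \xrightarrow{k\to\infty} \int t\, d\tilde\nu = \E_\nu[\e{2}-\e{1}]\quad\text{a.s.},
$$
which combined with $\e{0}/k\to 0$ gives the claim. The main obstacle is precisely this step of recognising that although $T_j$ is not $\sigma(\W_j)$-measurable, it fits naturally into an enriched Markov structure to which the standard ergodic theorem applies; the uniform exponential moments from Lemma~\ref{lem:last-entry-exponential-moments} are what guarantee integrability of the functional and rule out any pathology in passing from an arbitrary initial distribution to the stationary one.
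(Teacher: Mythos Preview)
Your proposal is correct and follows essentially the same approach as the paper, which simply states that the lemma follows from ``an application of the ergodic theorem for positive recurrent Markov chains together with Lemma~\ref{lem:last-entry-exponential-moments}''. You have fleshed out that one-line argument, in particular by noting that $\e{j}-\e{j-1}$ is not $\sigma(\W_j)$-measurable and handling this via an enriched chain; this is a legitimate way to make the paper's sketch rigorous, and the exponential moment bound is exactly what guarantees the required integrability.
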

We will see that it is sufficient to consider only the last entry
times of the retracted walk in order to prove existence of the asymptotic
entropy. For this purpose we use the following lemma that is a consequence of the fact that $\bigl(d(\mathcal{R}_k,\overline{X}_{\e{k}})\bigr)_{k\geq 1}$ is a functional of the ergodic Markov chain $(\W_{k})_{k\geq 1}$.
\begin{lem}\label{lem:root-last-entry-distance}
$$
\frac{d(\mathcal{R}_k,\overline{X}_{\e{k}})}{k} \quad \xrightarrow{k\to\infty} \quad 0 \quad \textrm{ almost surely}.
$$
\end{lem}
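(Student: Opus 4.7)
The key observation is that by construction of $\W_k = (\bT_k, u_k, y_k)$ the third coordinate is $y_k = \mathcal{R}_k^{-1}\overline{X}_{\e{k}}$, and so
$$
d(\mathcal{R}_k, \overline{X}_{\e{k}}) \;=\; \ell(y_k) \;=\; f(\W_k), \qquad\text{where}\quad f(\bT,u,y) := \ell(y).
$$
This exhibits the quantity of interest as a deterministic functional of the ergodic positive recurrent Markov chain $(\W_k)_{k\geq 1}$ provided by Lemma~\ref{lem:W is ergodic}. The plan is to apply the Birkhoff ergodic theorem to $f$ and then extract pointwise decay by a telescoping argument.

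Concretely, assuming for now that $\mathbb{E}_\nu[f] < \infty$, where $\nu$ is the unique stationary probability measure, the ergodic theorem for positive recurrent Markov chains yields
$$
\frac{1}{k}\sum_{j=1}^k \ell(y_j) \;\xrightarrow{k\to\infty}\; \mathbb{E}_\nu[f] \quad\text{almost surely}.
$$
Writing $S_k := \sum_{j=1}^k \ell(y_j)$, a standard Ces\`aro/telescoping manipulation then gives
$$
\frac{\ell(y_k)}{k} \;=\; \frac{S_k}{k} - \frac{k-1}{k}\cdot\frac{S_{k-1}}{k-1} \;\xrightarrow{k\to\infty}\; \mathbb{E}_\nu[f] - \mathbb{E}_\nu[f] \;=\; 0 \quad\text{a.s.,}
$$
which is precisely the desired statement.

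The substantive step, and the one I expect to be the main obstacle, is to verify that $\mathbb{E}_\nu[f] < \infty$. The plan here parallels the proof of Lemma~\ref{lemma:sum-L-finite}. On the one hand, the number of $y \in \partial_{L_0}\mathrm{Int}_{3L_1}\bT$ with $\ell(y) = n$ grows at most linearly in $n$ by Lemma~\ref{lem:boundary-rays} combined with the inclusion \eqref{equ:boundary-inclusion}, and by property~\emph{6} of Proposition~\ref{prop:covering construction} the number of $u$ in each covering $\mathrm{Cov}(\bT')$ of length $m$ likewise grows at most linearly in $m$. On the other hand, for a triple $\w = (\bT,u,y)\in\mathcal{W}$ the event $\{\W_k=\w\}$ forces the retracted walk to visit a point of the form $\mathcal{R}_{k-1}\,u\,y$, which lies in $C(\mathcal{R}_{k-1})$ and is hence at Coxeter distance $\ell(\mathcal{R}_{k-1}) + \ell(u) + \ell(y)$ from $e$. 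Combining with the exponential Green function estimate \eqref{equ:Green-estimate} and summing over the possible values of $\mathcal{R}_{k-1}$, one obtains a uniform-in-$k$ bound of the form $\Prob[\W_k=\w] \leq C\lambda^{\ell(u)+\ell(y)}$ for some constants $C>0$ and $\lambda\in(0,1)$. Passing to Ces\`aro averages and using that $\frac{1}{k}\sum_{j=1}^k \Prob[\W_j=\w] \to \nu(\w)$ for positive recurrent chains transfers this bound to the stationary measure, and then $\mathbb{E}_\nu[f] = \sum_{\w}\ell(y)\,\nu(\w)$ is dominated by a convergent series (a finite sum over the cone types in $\mfA_R$ and geometrically convergent sums in $\ell(u)$ and $\ell(y)$), completing the verification.
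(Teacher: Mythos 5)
Your reduction of the statement to the ergodic theorem is exactly the route the paper takes (the paper's ``proof'' is the single remark that $\bigl(d(\mathcal{R}_k,\overline{X}_{\e{k}})\bigr)_{k\geq 1}$ is a functional of the ergodic Markov chain $(\W_k)_{k\geq 1}$, followed by no further details). Your observation that $d(\mathcal{R}_k,\overline{X}_{\e{k}})=\ell(y_k)=f(\W_k)$ is correct, and the Ces\`aro--telescoping step
$\ell(y_k)/k = S_k/k - \frac{k-1}{k}\,S_{k-1}/(k-1)\to 0$ is valid once the ergodic theorem applies. You also correctly single out what the paper silently glosses over: the ergodic theorem gives the almost sure convergence of $S_k/k$ only under integrability $\mathbb{E}_\nu[\ell(y)]<\infty$, and if this fails then (for a nonnegative functional) $S_k/k\to\infty$ a.s.\ and the telescoping argument is vacuous, so the integrability is genuinely what must be established.

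The one genuine gap is in your sketched verification of $\mathbb{E}_\nu[\ell(y)]<\infty$. The claim that ``summing over the possible values of $\mathcal{R}_{k-1}$'' and applying (\ref{equ:Green-estimate}) produces a uniform-in-$k$ bound $\Prob[\W_k=\w]\leq C\lambda^{\ell(u)+\ell(y)}$ does not follow as stated: after writing
$\Prob[\W_k=\w]\leq\sum_{r}\Prob[\text{walk visits }ruy]\leq C\lambda^{\ell(u)+\ell(y)}\sum_r\lambda^{\ell(r)}$,
the residual sum $\sum_r\lambda^{\ell(r)}$ ranges over (a subset of) $W$, which has exponential growth, and in general $\sum_{w\in W}\overline{G}(e,w)=\infty$ (it equals $\sum_{n\geq 0}\sum_w \overline{p}^{(n)}(e,w)=\sum_{n\geq 0}1$), so there is no reason this residual sum is finite, let alone bounded uniformly in $k$. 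Restricting $r$ to $\mathrm{Cov}_{k-1}$ does not immediately rescue this, since the cardinality of $\mathrm{Cov}_{k-1}\cap\cB_m(e)$ can still grow exponentially in $k$. A repair should work at the level of the transition kernel $q(\w',\w)=\frac{\xi(\w)}{\xi(\w')}\widehat{G}(\w',\w)$ and the stationary identity $\nu(\w)=\sum_{\w'}\nu(\w')q(\w',\w)$, using $C_0\leq\xi\leq 1$ (Lemma~\ref{lemma:xi>0}) and the intrinsic cone-interior Green function $\widehat{G}$, together with the linear growth of boundaries and coverings that you already invoke; alternatively, one can try to control $\ell(y_k)$ by the increments $\e{k}-\e{k-1}$ (which have uniform exponential moments by Lemma~\ref{lem:last-entry-exponential-moments}) via $d(\overline{X}_{\e{k-1}},\overline{X}_{\e{k}})\leq L_0(\e{k}-\e{k-1})$ and the nesting of $\mathcal{R}_{k-1},\mathcal{R}_k$ on the geodesic to $\overline{X}_{\e{k}}$. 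Either way, the integrability of $\ell(y)$ under $\nu$ requires a more careful accounting than the two-line sketch provided, and this is the place where your plan as written would not yet go through.
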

Due to (\ref{equ:speed}) and $\ell(X_{\e{k}})=\ell(\overline{X}_{\e{k}})$ we have
\begin{equation}\label{equ:retracted-drift}
\lim_{k\to\infty} \frac{d(e,\overline{X}_{\e{k}})}{\e{k}} =
\lim_{k\to\infty} \frac{d(e,\overline{X}_{\e{k}})}{k}\frac{k}{\e{k}} =\mathfrak{v}\quad \textrm{ almost surely}.
\end{equation}
Since $d(e,\mathcal{R}_k)=\sum_{i=0}^k d(\mathcal{R}_{i-1},\mathcal{R}_i)=\sum_{i=0}^k |u_i|$, where $\mathcal{R}_{-1}:=e$ and $\mathbf{W}_i=(\bT_i,u_i,y_i)$, another application of the ergodic theorem yields almost surely $d(e,\mathcal{R}_k)/k \to \E_{\nu}[d(\mathcal{R}_2, \mathcal{R}_1)]$ as $k\to\infty$.
With Lemmata \ref{lem:e_k/k-convergence} and \ref{lem:root-last-entry-distance} and using  (\ref{equ:retracted-drift}) we can now express the drift $\mathtt{v}$ of $(\overline{X}_n)_{n\geq 0}$  as
\begin{equation}\label{equ:drift-formula}
\mathtt{v}=\frac{\E_{\nu}[d(\mathcal{R}_2, \mathcal{R}_1)]}{\E_{\nu}[\e{2}-\e{1}]}.
\end{equation}

In the next step we express $H(\Y)$ in terms of the  Green distance.
\begin{prop}\label{prop:l-convergence}
$$
 \frac{\ell_{G}(\overline{X}_{\e{k}})}{k} \quad \xrightarrow{k\to\infty} \quad H(\Y)\quad \textrm{ almost surely}.
$$
\end{prop}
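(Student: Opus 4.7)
Proof plan. The convergence $\ell_{G}(\overline{X}_{\mathbf{e}_k})/k \to H(\mathbf{Y})$ almost surely is equivalent, via $\ell_G(w) = -\log\overline{G}(e, w) + \log \overline{G}(e, e)$ and $\overline{G}(e,e) < \infty$, to $-\frac{1}{k}\log \overline{G}(e, \overline{X}_{\mathbf{e}_k}) \to H(\mathbf{Y})$ almost surely. The strategy is a two-sided comparison between the single Green function value $\overline{G}(e, \overline{X}_{\mathbf{e}_k})$ and the sum $e^{-l(\mathcal{R}_k)} = \sum_{v \in \partial_{L_0}\mathrm{Int}_{3L_1} C(\mathcal{R}_k)} \overline{G}(e, v)$, whose logarithmic rate is controlled by Proposition~\ref{prop:hat-l-convergence}.

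The $\liminf$ direction is immediate: by definition of the last entry time, $\overline{X}_{\mathbf{e}_k}$ belongs to $\partial_{L_0}\mathrm{Int}_{3L_1} C(\mathcal{R}_k)$, so it contributes a single term in the sum, giving $\overline{G}(e, \overline{X}_{\mathbf{e}_k}) \leq e^{-l(\mathcal{R}_k)}$. Taking $-\log$, dividing by $k$, and invoking Proposition~\ref{prop:hat-l-convergence} yields $\liminf_{k\to\infty} -\frac{1}{k}\log \overline{G}(e, \overline{X}_{\mathbf{e}_k}) \geq H(\mathbf{Y})$ almost surely.

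For the $\limsup$ direction, I would first use Lemma~\ref{lem:root-last-entry-distance} together with the positive-step assumption~(\ref{eq:support}) to concatenate paths of length $d(\mathcal{R}_k, \overline{X}_{\mathbf{e}_k}) = o(k)$, obtaining $\overline{G}(e, \overline{X}_{\mathbf{e}_k}) \geq \varepsilon_0^{d(\mathcal{R}_k, \overline{X}_{\mathbf{e}_k})}\overline{G}(e, \mathcal{R}_k) = e^{-o(k)}\overline{G}(e, \mathcal{R}_k)$, which reduces the task to establishing $\overline{G}(e, \mathcal{R}_k) \geq e^{-l(\mathcal{R}_k) - o(k)}$, equivalently $\sum_v \overline{G}(e, v) \leq e^{o(k)}\overline{G}(e, \mathcal{R}_k)$. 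The plan is to mimic the first-passage decomposition from the proof of Proposition~\ref{prop:hat-l-convergence}: expand both $\sum_v \overline{G}(e,v)$ and $\overline{G}(e, \mathcal{R}_k)$ as sums over tuples $(\bar v_0, \ldots, \bar v_{k-1})$ of first-passage points across the nested boundaries $\partial_{L_0}\mathrm{Int}_{3L_1}C(\mathcal{R}_j)$, $j = 0,\ldots, k-1$. After cancellation of the common $\widehat{G}$-factors, the comparison reduces to a ratio of terminal factors: on one side, $\sum_{\bar v_k}\widehat{G}(\cdot,(\bT_k,u_k,\bar v_k))\widetilde{G}(\bar v_k, \partial_{L_0}\mathrm{Int}_{3L_1}\bT_k)$, bounded uniformly by Lemma~\ref{lem:tilde-G-bounded}; on the other, a Green-type factor for paths from $\bar v_{k-1}$ to $\mathcal{R}_k$ staying in $\mathrm{Int}_{L_1}C(\mathcal{R}_{k-1})$, bounded uniformly below via~(\ref{eq:support}) and Lemma~\ref{lemma:xi>0}. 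Proposition~\ref{prop:invariance} together with the finiteness of cone types of $\mathfrak{A}$ makes these bounds uniform in $k$.

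The main obstacle is establishing the uniform two-sided bound on the terminal-factor ratio, since the two decompositions terminate asymmetrically: the sum telescopes through all $k$ boundaries and ends with $\widetilde{G}(\bar v_k, \partial_{L_0}\mathrm{Int}_{3L_1}\bT_k)$, whereas $\overline{G}(e, \mathcal{R}_k)$ telescopes only $k-1$ times and ends at the cone apex $\mathcal{R}_k$ which itself lies on $\partial C(\mathcal{R}_k)$ rather than in its $L_1$-interior. Careful bookkeeping of which paths contribute to each side, combined with the trivial lower bound $\widetilde{G} \geq 1$ (from the $n=0$ term) and the cone-type-indexed estimates, should allow this terminal ratio to be bounded uniformly in $k$; combining with Proposition~\ref{prop:hat-l-convergence} then completes the proof.
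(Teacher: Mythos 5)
Your $\liminf$ direction and the reduction via $\overline{G}(e,\mathcal{R}_k)\,\varepsilon_0^{d(\mathcal{R}_k,\overline{X}_{\e{k}})}\leq \overline{G}(e,\overline{X}_{\e{k}})$ match the paper exactly. The framework of sandwiching $\ell_G(\overline{X}_{\e{k}})$ between $l(\mathcal{R}_k)$ and a perturbation of it, then invoking Proposition~\ref{prop:hat-l-convergence} and Lemma~\ref{lem:root-last-entry-distance}, is also the same as the paper. The departure is in how you propose to show $\exp(-l(\mathcal{R}_k))\leq e^{o(k)}\,\overline{G}(e,\mathcal{R}_k)$, and that is where there is a genuine gap.

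You assert that the terminal factor on the $\overline{G}(e,\mathcal{R}_k)$ side --- a Green-type quantity for paths from $\mathcal{R}_{k-1}\bar v_{k-1}$ to $\mathcal{R}_k$ staying in $\mathrm{Int}_{L_1}C(\mathcal{R}_{k-1})$ --- is ``bounded uniformly below via~(\ref{eq:support}) and Lemma~\ref{lemma:xi>0}.'' This is false: the last-entry point $\bar v_{k-1}$ ranges over the \emph{infinite} set $\partial_{L_0}\mathrm{Int}_{3L_1}\bT_{k-1}$, so $d(\bar v_{k-1},u_k)$ is unbounded, and the corresponding Green factor decays exponentially (indeed, $\leq C\lambda^{d(\bar v_{k-1},u_k)}$ by~(\ref{equ:Green-estimate})); the bound $\varepsilon_0^{d(\bar v_{k-1},u_k)}$ from~(\ref{eq:support}) likewise tends to zero, and Lemma~\ref{lemma:xi>0} only controls exit probabilities, not Green-function ratios. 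Consequently the terminal-factor ratio you want to bound is not controlled by those two ingredients: what is needed is that the numerator's Green factor decays at least as fast as the denominator's, uniformly in $\bar v_{k-1}$, and this is precisely the content of Ancona's inequality. The paper avoids the combinatorial bookkeeping entirely by applying Ancona directly: since $\mathcal{R}_k$ lies on a geodesic from $e$ to every $w\in C(\mathcal{R}_k)$, one has $\overline{G}(e,w)\leq\widetilde{C}\,\overline{G}(e,\mathcal{R}_k)\,\overline{G}(\mathcal{R}_k,w)$, and then~(\ref{equ:Green-estimate}) together with the linear growth of $\partial_{L_0}\mathrm{Int}_{3L_1}C(\mathcal{R}_k)$ makes $\sum_w\overline{G}(\mathcal{R}_k,w)$ a uniform constant, giving $\exp(-l(\mathcal{R}_k))\leq \mathrm{const}\cdot\overline{G}(e,\mathcal{R}_k)$ in one stroke. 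Your decomposition could in principle be repaired, but only by re-introducing an Ancona-type multiplicativity estimate in the terminal comparison; as written, the claimed uniform lower bound is the missing step and the argument does not close.
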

\begin{proof} On the one hand side we have that
\begin{equation}\label{equ:ell-l-inequ-1}
\exp(-\ell_{G}(\overline{X}_{\e{k}}))=
\frac{\overline{G}(e,\overline{X}_{\e{k}})}{\overline{G}(e,e)} \leq \sum_{v\in\partial_{L_0}\mathrm{Int}_{3L_1} C(\mathcal{R}_k)} \frac{\overline{G}(e,v)}{\overline{G}(e,e)}=\exp(-l(\mathcal{R}_k)).
\end{equation}
On the other hand side we have
\begin{equation}\label{equ:G-inequality-in-proof-of-l-convergence}
\overline{G}(e,\mathcal{R}_k)\cdot \varepsilon_0^{d(\mathcal{R}_k,\overline{X}_{\e{k}})} \leq \overline{G}(e,\overline{X}_{\e{k}});
\end{equation}
recall that $\varepsilon_0>0$ is the minimal single-step transition probability of the
random walk $(\overline{X}_n)_{n\geq 0}$. As in the proof of Lemma \ref{lemma:sum-L-finite} we have that $ \partial_{L_0}\mathrm{Int}_{3L_1} C(\mathcal{R}_k)\cap \cB_n(\mathcal{R}_k)$ grows linearly in $n$. 

Recall Ancona's Inequality (see e.g.~\cite[Theorem 27.12]{woess}): there is some $C>0$ such that 
$$
\overline{G}(u,v) \leq \widetilde{C} \cdot \overline{G}(u,w)\cdot \overline{G}(w,v) 
$$
for all $u,v,w\in W$ with $w$ being on a geodesic from $u$ to $v$. Since
$\mathcal{R}_k$ is on a geodesic from $e$ to any $w\in C(\mathcal{R}_k)$, we get with (\ref{equ:Green-estimate}) and (\ref{equ:boundary-inclusion}):
\begin{eqnarray}
\exp(-l(\mathcal{R}_k))&=&\sum_{w\in\partial_{L_0}\mathrm{Int}_{3L_1} C(\mathcal{R}_k)} \frac{\overline{G}(e,w)}{\overline{G}(e,e)} \\
& \leq & \frac{\widetilde{C}}{\overline{G}(e,e)} \cdot  \sum_{w\in\partial_{L_0}\mathrm{Int}_{3L_1}
  C(\mathcal{R}_k)} \overline{G}(e,\mathcal{R}_k) \overline{G}(\mathcal{R}_k,y)\nonumber\\
&\leq &  \frac{\widetilde{C} \cdot \overline{G}(e,\mathcal{R}_k)}{\overline{G}(e,e)} \sum_{w\in\partial_{L_0}\mathrm{Int}_{3L_1}
  C(\mathcal{R}_k)}C\cdot \lambda^{d(\mathcal{R}_k,w)}\nonumber\\
&\leq & \frac{ \widetilde{C}C}{\varepsilon_0^{d(\mathcal{R}_k,\overline{X}_{\e{k}})}} \cdot
\frac{\overline{G}(e,\mathcal{R}_k)}{\overline{G}(e,e)}\cdot \varepsilon_0^{d(\mathcal{R}_k,\overline{X}_{\e{k}})}\cdot
\sum_{n\geq 0} 2(n+3L_1+L_0)|\cB_{3L_1+L_0}(e)|\lambda^n  \nonumber\\
&\leq& \frac{\widetilde{C}C}{\varepsilon_0^{d(\mathcal{R}_k,\overline{X}_{\e{k}})}} \frac{\overline{G}(e,\overline{X}_{\e{k}})}{\overline{G}(e,e)}\cdot |\cB_{3L_1+L_0}(e)| \cdot \left( \frac{6L_1+2L_0}{1-\lambda}+\frac{2\lambda}{(1-\lambda)^2}\right).,\label{equ:ell-l-inequ-2}
\end{eqnarray}
The last inequality follows from (\ref{equ:G-inequality-in-proof-of-l-convergence}).
Together with Proposition \ref{prop:hat-l-convergence} and Lemma
\ref{lem:root-last-entry-distance} we get the claim since (\ref{equ:ell-l-inequ-1}) and (\ref{equ:ell-l-inequ-2}) allow to
compare $l(\mathcal{R}_k)$ with $\ell_{G}(\overline{X}_{\e{k}})$.
\end{proof}
Now we can prove existence of the rate of escape with respect to the Green distance of the
retracted walk:
\begin{thm}\label{thm:l-entropy} We have
$$
 \frac{\ell_{G}(\overline{X}_n)}{n} \quad \xrightarrow{k\to\infty} \quad \frac{H(\Y)}{\E_{\nu}[\e{2}-\e{1}]}:=h_{G}>0.
\quad \textrm{ almost surely,}
$$
\end{thm}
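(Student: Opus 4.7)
The plan is to upgrade the subsequential convergence along the last entry times $(\e{k})_{k\geq 0}$ to full convergence along $n$, using that the Green length $\ell_G$ is Lipschitz with respect to the word length and that $\e{k+1}-\e{k}$ is negligible compared to $\e{k}$.

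First I would combine Proposition~\ref{prop:l-convergence} with Lemma~\ref{lem:e_k/k-convergence}: dividing the almost sure limits gives
$$
\frac{\ell_G(\overline{X}_{\e{k}})}{\e{k}}=\frac{\ell_G(\overline{X}_{\e{k}})}{k}\cdot\frac{k}{\e{k}}\quad\xrightarrow{k\to\infty}\quad \frac{H(\Y)}{\mathbb{E}_\nu[\e{2}-\e{1}]}=:h_G\quad\text{almost surely.}
$$
Positivity $h_G>0$ is immediate since $H(\Y)>0$ was shown above and $\mathbb{E}_\nu[\e{2}-\e{1}]\in(0,\infty)$ by Lemma~\ref{lem:last-entry-exponential-moments}.

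Next I would establish a Lipschitz-type bound for $\ell_G$: for all $u,v\in W$,
$$
|\ell_G(u)-\ell_G(v)|\leq |\log\varepsilon_0|\cdot d(u,v).
$$
This follows from Chapman--Kolmogorov together with the lower bound~(\ref{eq:support}): choosing a geodesic from $u$ to $v$ of length $d(u,v)$ using unit steps gives $\overline{p}^{(d(u,v))}(u,v)\geq \varepsilon_0^{d(u,v)}$, hence
$$
\overline{G}(e,v)\geq \sum_{n\geq 0}\overline{p}^{(n)}(e,u)\,\overline{p}^{(d(u,v))}(u,v)\geq \varepsilon_0^{d(u,v)}\,\overline{G}(e,u),
$$
and symmetrically. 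Taking logarithms yields the bound.

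For arbitrary $n$, let $k=k(n)$ be the unique index with $\e{k}\leq n<\e{k+1}$. Since the retracted walk has bounded range $L_0$, each step moves at most $L_0$ in word distance, so $d(\overline{X}_n,\overline{X}_{\e{k}})\leq L_0(n-\e{k})\leq L_0(\e{k+1}-\e{k})$. Combining with the Lipschitz bound gives
$$
\frac{|\ell_G(\overline{X}_n)-\ell_G(\overline{X}_{\e{k}})|}{n}\leq \frac{L_0|\log\varepsilon_0|\,(\e{k+1}-\e{k})}{\e{k}}.
$$
By Lemma~\ref{lem:last-entry-exponential-moments} the increments $\e{k+1}-\e{k}$ have uniform exponential moments, so $\Pr[\e{k+1}-\e{k}>\varepsilon k]\leq K_{\e{}}e^{-\lambda_{\e{}}\varepsilon k}$ is summable in $k$ for every $\varepsilon>0$; Borel--Cantelli therefore yields $(\e{k+1}-\e{k})/k\to 0$ almost surely, and hence $(\e{k+1}-\e{k})/\e{k}\to 0$ and $n/\e{k}\to 1$ almost surely by Lemma~\ref{lem:e_k/k-convergence}. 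Writing
$$
\frac{\ell_G(\overline{X}_n)}{n}=\frac{\ell_G(\overline{X}_{\e{k}})}{\e{k}}\cdot\frac{\e{k}}{n}+\frac{\ell_G(\overline{X}_n)-\ell_G(\overline{X}_{\e{k}})}{n},
$$
each term converges almost surely, giving $\ell_G(\overline{X}_n)/n\to h_G$.

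The only mildly delicate point is the Borel--Cantelli argument, since the sequence $(\e{k+1}-\e{k})_{k\geq 1}$ is not i.i.d.; however the proof invokes only the uniform tail bound from Lemma~\ref{lem:last-entry-exponential-moments}, not stationarity, so no subtlety arises. The Lipschitz bound is the heart of the interpolation and replaces any need for a more refined Ancona-type comparison between $\overline{X}_n$ and $\mathcal{R}_k$ in the present step.
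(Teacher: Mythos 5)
Your proof is correct and follows essentially the same route as the paper: decompose $\ell_G(\overline{X}_n)/n$ along the last-entry times $\e{\mathbf{k}(n)}$, feed Proposition~\ref{prop:l-convergence} and Lemma~\ref{lem:e_k/k-convergence} into the main term, and control the interpolation term via bounded range and the uniform lower bound $\varepsilon_0$ on single-step probabilities of the retracted walk. The only cosmetic differences are your explicit Lipschitz bound for $\ell_G$ (the paper bounds the Green-function ratio directly using the walk's own path of length $n-\e{\mathbf{k}(n)}$) and your Borel--Cantelli argument from Lemma~\ref{lem:last-entry-exponential-moments} for $(\e{k+1}-\e{k})/k\to 0$ (the paper instead sandwiches $n/\e{\mathbf{k}(n)}$ between $1$ and $\e{\mathbf{k}(n)+1}/\e{\mathbf{k}(n)}$ and applies Lemma~\ref{lem:e_k/k-convergence} twice).
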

\begin{proof}
For $n\in\N$, define
\begin{equation}\label{equ:def-k(n)}
\mathbf{k}(n)  :=  \max \{k\in\N \mid \e{k}\leq n\}.
\end{equation}
With this notation we have
$$
\lim_{n\to\infty} \frac{\ell_{G}(\overline{X}_n)}{n} = \lim_{n\to\infty}
\frac{\ell_{G}(\overline{X}_n)-\ell_{G}(\overline{X}_{\e{\mathbf{k}(n)}})}{n}
+\frac{\ell_{G}(\overline{X}_{\e{\mathbf{k}(n)}})}{\mathbf{k}(n)}\frac{\mathbf{k}(n)}{\e{\mathbf{k}(n)}}\frac{\e{\mathbf{k}(n)}}{n}.
$$
By Proposition \ref{prop:l-convergence},
$\ell_{G}(\overline{X}_{\e{\mathbf{k}(n)}})/\mathbf{k}(n)$ tends almost surely to
$H(\Y)$. By Lemma \ref{lem:e_k/k-convergence},
$\e{\mathbf{k}(n)}/\mathbf{k}(n)$ tends almost surely to $\E_{\nu}[\e{2}-\e{1}]$ as
$n\to\infty$. Since
$$
1\leq \frac{n}{\e{{\mathbf{k}(n)}}}\leq
\frac{\mathbf{e}_{\mathbf{k}(n)+1}}{\mathbf{e}_{\mathbf{k}(n)}}=\frac{\mathbf{e}_{\mathbf{k}(n)+1}}{\mathbf{k}(n)+1}\frac{\mathbf{k}(n)+1}{\mathbf{e}_{\mathbf{k}(n)}}\xrightarrow{n\to\infty}
1 \ \textrm{ almost surely}
$$
we have $\lim_{n\to\infty}\e{{\mathbf{k}(n)}}/n=1$ almost surely.
\par
It remains to investigate the difference $\ell_{G}(\overline{X}_n)-\ell_{G}(\overline{X}_{\e{\mathbf{k}(n)}})$.
Obviously, one can walk with positive probability in $n-\e{\mathbf{k}(n)}$ steps
from $\overline{X}_{\e{\mathbf{k}(n)}}$ to $\overline{X}_n$ and vice versa; this probability is at
least $\varepsilon_0^{n-\e{\mathbf{k}(n)}}$, where $\varepsilon_0>0$ is the
minimal single-step transition probability of
$(\overline{X}_n)_{n\geq 0}$. Hence, we have the following estimates:
$$
G(e,\overline{X}_n)\cdot \varepsilon_0^{n-\e{\mathbf{k}(n)}}  \leq  
G(e,\overline{X}_{\e{\mathbf{k}(n)}}),\quad\quad
G(e,\overline{X}_{\e{\mathbf{k}(n)}}) \cdot \varepsilon_0^{n-\e{\mathbf{k}(n)}}   \leq  G(e,\overline{X}_n).
$$
Since $\lim_{n\to\infty}\e{{\mathbf{k}(n)}}/n=1$ we get
\begin{equation}\label{equ:n-e-convergence}
\frac{n-\e{\mathbf{k}(n)}}{n}\xrightarrow{n\to\infty} 0\quad \textrm{ almost surely.}
\end{equation}
Thus, the quotient $\bigl(\ell_{G}(\overline{X}_n)-\ell_{G}(\overline{X}_{\e{\mathbf{k}(n)}})\bigr)/n$ tends to
zero almost surely. This finishes the proof.
\end{proof}
By (\ref{equ:drift-formula}) we can give another formula for $h_G$ in terms of the rate of escape $\mathtt{v}$ of $(X_n)_{n\geq 0}$:
\begin{equation}\label{equ:h_G}
h_G = \frac{H(\Y) \cdot \mathtt{v}}{ \E_{\nu}[d(\cR_{2},\cR_{1})]}.
\end{equation}

\subsection{The entropy of the retracted walk}
\label{subsec:entropy-retracted-walk}

We follow the reasoning of \cite{gilch:11} for the proof of existence of the entropy.
First, we remark that due to irreducibility of the retracted walk and by
\cite[Proposition 4.6, Corollary 4.9]{gilch-mueller-parkinson:14} we have a unique radius of convergence $R>1$ of $G(u,v|z)$ for all $u,v\in W$.
In the following let be $r\in[1,R)$ and recall that $\varepsilon_0$ is
the minimal positive single-step transition probability of the retracted walk. 
The following technical lemma that is an adaptation of \cite[Lemma 3.6]{gilch:11}  will be used in the proof of the next theorem.

\begin{lemma}\label{lemma:sum-bounds}
For $n\in\N$, consider the function $f_n:W\to\R$ defined by
$$
f_n(w):=\begin{cases}
-\frac{1}{n}\log \sum_{m=0}^{n^2} \overline{p}^{(m)}(e,w), & \textrm{if }
\overline{p}^{(n)}(e,w)>0,\\
0, & \textrm{otherwise.}
\end{cases}
$$
Then there are constants $d$ and $D$ such that $d\leq f_n(w)\leq D$ for all
$n\in\N$ and $w\in W$.
\end{lemma}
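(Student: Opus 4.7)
The plan is to establish the two bounds separately, with the upper bound being the substantive step. Throughout, assume $\overline{p}^{(n)}(e,w)>0$, since otherwise $f_n(w)=0$ and there is nothing to prove.

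For the lower bound $d$, I will use that the sum is dominated by the Green function and invoke the uniform decay bound (\ref{equ:Green-estimate}). Since $\lambda^{d(e,w)}\leq 1$, we have
$$
\sum_{m=0}^{n^2}\overline{p}^{(m)}(e,w)\leq \overline{G}(e,w)\leq C,
$$
with $C$ from (\ref{equ:Green-estimate}). Setting $d:=-\log\max\{C,1\}$, we get $f_n(w)\geq -\tfrac{1}{n}\log C\geq d$ for every $n\geq 1$ and every $w\in W$.

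For the upper bound $D$, the strategy is to exhibit a uniformly short path from $e$ to $w$ (of length at most $n^2$) whose probability is at least exponential in $n$. Two ingredients are needed. First, I will show that
$$
\overline{p}(u,us)\geq \varepsilon_0\quad\text{whenever }s\in S\text{ with }\ell(us)=\ell(u)+1.
$$
Indeed, if $x\in\Delta$ satisfies $\delta(o,x)=u$, then by axiom (B2) every chamber $y$ with $\delta(x,y)=s$ satisfies $\delta(o,y)=us$, so
$$
\overline{p}(u,us)=\sum_{y:\delta(o,y)=us}p(x,y)\geq \sum_{y:\delta(x,y)=s}p(x,y)=p_s\geq q_s\varepsilon_0\geq \varepsilon_0
$$
using (\ref{eq:support}). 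Second, the bounded range assumption (\ref{eq:L0}) forces $|\ell(\overline{X}_{k+1})-\ell(\overline{X}_k)|\leq L_0$, so $\overline{p}^{(n)}(e,w)>0$ implies $\ell(w)\leq nL_0$.

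Combining these two ingredients, for $n\geq L_0$ we have $\ell(w)\leq nL_0\leq n^2$, so the term $\overline{p}^{(\ell(w))}(e,w)$ appears in the sum, and walking along a fixed geodesic $e=w_0,w_1,\ldots,w_{\ell(w)}=w$ (where $w=s_1\cdots s_{\ell(w)}$ is reduced) yields
$$
\sum_{m=0}^{n^2}\overline{p}^{(m)}(e,w)\geq \overline{p}^{(\ell(w))}(e,w)\geq \prod_{i=1}^{\ell(w)}\overline{p}(w_{i-1},w_i)\geq \varepsilon_0^{\ell(w)}\geq \varepsilon_0^{nL_0},
$$
so $f_n(w)\leq L_0\log(1/\varepsilon_0)$. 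For the finitely many remaining $n<L_0$, the condition $\overline{p}^{(n)}(e,w)>0$ confines $w$ to the finite ball $\mathcal{B}_{nL_0}(e)\subseteq\mathcal{B}_{L_0^2}(e)$, so only finitely many pairs $(n,w)$ arise; the minimum of the positive values $\overline{p}^{(n)}(e,w)$ over these is a positive constant, and absorbing the resulting finitely many bounds into $D:=\max\{L_0\log(1/\varepsilon_0),\text{(these bounds)}\}$ completes the argument. The main obstacle, modest in scale, is the single-step lower bound $\overline{p}(u,us)\geq\varepsilon_0$; once this is in hand, the rest is a straightforward geodesic-chasing argument.
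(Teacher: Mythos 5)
Your proof is correct and follows the natural route (upper-bound the partial sum by the full Green function for the lower bound on $f_n$, lower-bound it by a single geodesic path for the upper bound), which is the same strategy as the cited Lemma 3.6 of Gilch (2011) that the paper adapts. The genuinely nontrivial adaptation you supply — and which the paper elides — is the observation that $\overline{p}(u,us)\geq\varepsilon_0$ for length-increasing steps, which does not follow from group-invariance (as it would in the free-product setting of Gilch's original lemma) but requires the building axiom (B2) to ensure that $s$-adjacent chambers project under $\rho$ to $us$; this is exactly the right fix, and the rest of the argument (the cases $n\geq L_0$ and the finitely many exceptional pairs with $n<L_0$) is handled cleanly. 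One cosmetic remark: since $\overline{G}(e,e)\geq 1$ forces $C\geq 1$ in estimate (\ref{equ:Green-estimate}), the $\max\{C,1\}$ in your definition of $d$ is automatically $C$, but this does not affect the validity of the argument.
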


\begin{thm}\label{thm:L-limit}
The entropy $\overline{h}$ exists
and equals $h_G$. In particular, the asymptotic  entropy is the rate of escape with respect to the Green distance.
\end{thm}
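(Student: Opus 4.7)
The plan is to sandwich $-\frac{1}{n}\mathbb{E}\bigl[\log \overline{\pi}_n(\overline{X}_n)\bigr]$ between two sequences both converging to $h_G$. Set $a_n(w):=\overline{p}^{(n)}(e,w)$, so $\overline{\pi}_n(w)=a_n(w)$, and introduce the truncation $F_n(w):=\sum_{m=0}^{n^2} a_m(w)$ appearing in Lemma~\ref{lemma:sum-bounds}. The first step is a uniform lower bound on $a_n(\overline{X}_n)$: assumption~\eqref{eq:support} together with the bounded range property and the isotropy of the walk yield a positive constant $\varepsilon_1>0$ such that every nonzero single-step transition of the retracted walk satisfies $\overline{p}(u,v)\geq \varepsilon_1$, and since every realisation of $\overline{X}_n$ is reached by some length-$n$ trajectory, concatenation gives $a_n(\overline{X}_n)\geq \varepsilon_1^n$ almost surely. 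Consequently both $-\frac{1}{n}\log a_n(\overline{X}_n)$ and $-\frac{1}{n}\log \overline{G}(e,\overline{X}_n)$ are uniformly bounded above by $|\log \varepsilon_1|$. Combined with the almost sure convergence from Theorem~\ref{thm:l-entropy}, dominated convergence yields $\mathbb{E}\bigl[-\tfrac{1}{n}\log \overline{G}(e,\overline{X}_n)\bigr]\to h_G$; since $a_n(\overline{X}_n)\leq \overline{G}(e,\overline{X}_n)$ pointwise I immediately deduce
$$
\liminf_{n\to\infty}-\frac{1}{n}\mathbb{E}\bigl[\log a_n(\overline{X}_n)\bigr] \geq h_G.
$$

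For the complementary $\limsup$ bound, I first transfer the convergence from $\overline{G}$ to the truncation $F_n$. Since $\varrho(\overline{P})<1$, the tail satisfies $\overline{G}(e,w)-F_n(w)=\sum_{m>n^2}\overline{p}^{(m)}(e,w)=O\bigl(\varrho(\overline{P})^{n^2}\bigr)$ and vanishes superexponentially, whereas $F_n(\overline{X}_n)\geq a_n(\overline{X}_n)\geq \varepsilon_1^n$ decays only exponentially, so $\log F_n(\overline{X}_n)-\log \overline{G}(e,\overline{X}_n)\to 0$ almost surely and hence $-\tfrac{1}{n}\log F_n(\overline{X}_n)\to h_G$ almost surely; Lemma~\ref{lemma:sum-bounds} supplies the uniform bound needed for dominated convergence to give $\mathbb{E}\bigl[-\tfrac{1}{n}\log F_n(\overline{X}_n)\bigr]\to h_G$. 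The crucial step now is Jensen's inequality applied to the concave function $\log$ with respect to the probability measure $a_n(\cdot)$:
$$
\sum_{w:\, a_n(w)>0} a_n(w)\log\frac{F_n(w)}{a_n(w)} \leq \log \sum_w F_n(w) = \log(n^2+1),
$$
where the last identity uses Fubini: $\sum_w F_n(w) = \sum_{m=0}^{n^2}\sum_w a_m(w) = n^2+1$. Rearranging produces
$$
-\frac{1}{n}\mathbb{E}\bigl[\log a_n(\overline{X}_n)\bigr] \leq -\frac{1}{n}\mathbb{E}\bigl[\log F_n(\overline{X}_n)\bigr] + \frac{\log(n^2+1)}{n},
$$
whose right-hand side tends to $h_G$. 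Therefore $\limsup_{n\to\infty}-\tfrac{1}{n}\mathbb{E}\bigl[\log a_n(\overline{X}_n)\bigr]\leq h_G$, and combining with the $\liminf$ bound shows that $\overline{h}$ exists and equals $h_G$.

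The main obstacle is that the retracted walk is not $W$-invariant (Proposition~\ref{prop:invariance}), so the classical subadditivity argument for random walks on groups, which would immediately give existence of $\overline{h}$, is unavailable. The Jensen step plays the role of a substitute: it pays only a negligible $\log(n^2+1)$ price to replace the delicate pointwise quantity $\log a_n(w)$ by $\log F_n(w)$, whose asymptotics along the random walk path are directly tied to the Green distance via the already-established Theorem~\ref{thm:l-entropy} together with the uniform control from Lemma~\ref{lemma:sum-bounds}.
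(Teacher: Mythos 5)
Your proof is correct and takes essentially the same route as the paper: truncate the Green function at $n^2$ terms, pass from $\overline{G}$ to the truncation $F_n$ using $\varrho(\overline{P})<1$, apply a Gibbs/Kullback--Leibler type inequality (your ``Jensen applied to $\log$ with respect to $a_n$'' is exactly the paper's ``non-negativity of KL divergence / Shannon's inequality'' with $\mu_0 = F_n/(n^2+1)$), and close the other direction via $\overline{G}(e,\cdot)\geq\overline{p}^{(n)}(e,\cdot)$. The only cosmetic difference is that you get the $\liminf$ bound by dominated convergence with the explicit pointwise lower bound $a_n(\overline{X}_n)\geq\varepsilon_1^n$, whereas the paper invokes Fatou; both are fine since $-\tfrac{1}{n}\log\overline{\pi}_n(\overline{X}_n)\geq 0$ makes Fatou immediate anyway.
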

\begin{proof}
We can rewrite $h_G$ as
$$
h_G =  \int h_G\,d\Prob =
\int \lim_{n\to\infty} \frac{1}{n}\ell_{G}(\overline{X}_n)\,d\Prob\\
=
\int \lim_{n\to\infty} -\frac{1}{n} \log  \overline{G}\bigl(e,\overline{X}_n\bigl|1\bigr)\,d\Prob.
$$
Since
$$
\overline{G}(e,\overline{X}_n|1) = \sum_{m\geq 0} \overline{p}^{(m)}(e,\overline{X}_n) \geq \overline{p}^{(n)}(e,\overline{X}_n) = \overline{\pi}_n(\overline{X}_n),
$$
we have
\begin{equation}\label{equ:liminf-h}
h_G \leq \int \liminf_{n\to\infty} -\frac{1}{n} \log \overline{\pi}_n\bigl(\overline{X}_n\bigr)\,d\Prob.
\end{equation}
The next aim is to prove $\limsup_{n\to\infty} -\frac{1}{n}\E\bigl[\log
\overline{\pi}_n(\overline{X}_n)\bigr] \leq h_G$. 
We  decompose $\sum_{m\geq 0} \overline{p}^{(m)}(e,\overline{X}_n)$ as
$$
\sum_{m=0}^{n^2} \overline{p}^{(m)}(e,\overline{X}_n) \textrm{ and } b_n:=\sum_{m\geq n^2+1}
\overline{p}^{(m)}(e,\overline{X}_n).
$$
Let us  control the sequence $(b_{n})_{n\geq 1}$. There exists a constant
  $A$ such that, for any $N\in\mathbb{N}$, 
\begin{equation*}
\sum_{w\in \cB_N(e)} \overline{p}^{(m)}(e,w)\leq |S|^{N} A^{N} R^{-m},
\end{equation*}
see \cite[Lemma 8.1]{woess}. Hence,  there exists a constant $C_1$  such that
for all $m,n\in\N_0$  we have 
$$
\overline{p}^{(m)}(e,\overline{X}_n)
\leq \sum_{w\in \cB_{nL_0}(e)}\overline{p}^{(m)}(e,w) \leq C_1^{n} \cdot R^{-m}\quad \textrm{almost surely.}
$$
Hence, 
$$
b_n\leq \sum_{m\geq n^2+1} C_1^n \cdot  R^{-m}
=   C_1^n   \cdot
\frac{R^{-n^2-1}}{1-R^{-1}}. 
$$
Therefore, $b_n$ decays faster than any geometric sequence and hence
$$
h_G=\lim_{n\to\infty}  -\frac{1}{n} \log \sum_{m=0}^{n^2}
\overline{p}^{(m)}\bigl(e,\overline{X}_n\bigr)\quad \textrm{almost surely.}
$$
By Lemma \ref{lemma:sum-bounds}, 
we may apply the dominated convergence theorem and get:
\begin{eqnarray*}
h_G =\int h_G \,d\Prob 
& = &  \int \lim_{n\to\infty} -\frac{1}{n} \log \sum_{m=0}^{n^2}
\overline{p}^{(m)}(e,\overline{X}_n)\, d\Prob \\
&=&
\lim_{n\to\infty} \int -\frac{1}{n} \log \sum_{m=0}^{n^2}
\overline{p}^{(m)}(e,\overline{X}_n)\, d\Prob\\
&=& \lim_{n\to\infty} -\frac{1}{n} \sum_{w\in W} \overline{p}^{(n)}(e,w) \log
 \sum_{m=0}^{n^2}
\overline{p}^{(m)}(e,w).
\end{eqnarray*}
Non-negativity of the Kullback-Leibler divergence (also called
\textit{Shannon's inequality} in this context) gives
$$
-\sum_{w\in W} \overline{p}^{(n)}(e,w) \log \mu(w) \geq -\sum_{w\in W} \overline{p}^{(n)}(e,w) \log \overline{p}^{(n)}(e,w)
$$
for every finitely supported probability measure $\mu$ on $W$. 
We apply now this inequality to the probability measure $\mu_0$ defined by
$$
\mu_0(w):=\frac{1}{n^2+1} \sum_{m=0}^{n^2} \overline{p}^{(m)}(e,w)\quad
\textrm{ for $w\in W$}: 
$$
\begin{eqnarray*}
h_G& \geq & \limsup_{n\to\infty}\left( -\frac{1}{n} \sum_{w\in W}
\overline{p}^{(n)}(e,w) \log (n^2+1) -\frac{1}{n} \sum_{w\in W} \overline{p}^{(n)}(e,w) \log
\overline{p}^{(n)}(e,w)\right) \\
&=& \limsup_{n\to\infty} -\frac{1}{n}\int \log \overline{\pi}_n(\overline{X}_n)\, d\Prob.
\end{eqnarray*}
Now we can conclude with Fatou's Lemma and (\ref{equ:liminf-h}):
\begin{eqnarray}
h_G & \leq & \int \liminf_{n\to\infty} -\frac1n \log \overline{\pi}_n(\overline{X}_n) d\Prob \leq 
\liminf_{n\to\infty} \int -\frac1n \log \overline{\pi}_n(\overline{X}_n) d\Prob \nonumber\\
& \leq &
\limsup_{n\to\infty} \int -\frac1n \log \overline{\pi}_n(\overline{X}_n) d\Prob \leq h_G.\label{eq:entropy-final}
\end{eqnarray}
Thus, $\overline{h}=\lim_{n\to\infty} -\frac{1}{n} \E\bigl[\log \overline{\pi}_n(\overline{X}_n)\bigr]$ exists
and the limit equals $h_G$. It follows immediately from Theorem \ref{thm:l-entropy} that $\overline{h}$ is also the rate of escape with respect to the Green distance.
\end{proof}
\begin{remark}
The results of Forghani \cite{Forghani} are somehow in the same spirit as Theorem \ref{thm:L-limit}. It is shown there that  the asymptotic entropy of transformed random walks (by stopping times) equals the product of the original entropy and the expectation of the corresponding stopping times.
\end{remark}
Furthermore: 
\begin{cor}\label{cor:overline-h-convergence-types}
We have the following types of convergence: 
\begin{enumerate}
\item 
$$
\overline{h}=\liminf_{n\to\infty} -\frac1n \log \overline{\pi}_n(\overline{X}_n)\quad
\textrm{ almost surely.}
$$
\item 
$$
-\frac{1}{n}\log \overline{\pi}_n(\overline{X}_n) \xrightarrow{L_1} \overline{h}.
$$
\end{enumerate}
\end{cor}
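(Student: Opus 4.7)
The plan is to reuse the three ingredients already assembled in the proof of Theorem~\ref{thm:L-limit} and combine them in the right order. Write $Y_n := -\tfrac{1}{n}\log\overline{\pi}_n(\overline{X}_n)$ and $L_n := -\tfrac{1}{n}\log\sum_{m=0}^{n^2}\overline{p}^{(m)}(e,\overline{X}_n)$. From the proof of Theorem~\ref{thm:L-limit} we already have three facts at our disposal: (i) $L_n\to h_G=\overline{h}$ almost surely (the displayed equation after the estimate on $b_n$); (ii) $L_n$ is uniformly bounded, by Lemma~\ref{lemma:sum-bounds}; (iii) $\mathbb{E}[Y_n]\to \overline{h}$, which is exactly the content of the final chain (\ref{eq:entropy-final}). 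Moreover, since $\sum_{m=0}^{n^2}\overline{p}^{(m)}(e,\overline{X}_n)\geq \overline{p}^{(n)}(e,\overline{X}_n)=\overline{\pi}_n(\overline{X}_n)$, we have the pointwise inequality $Y_n\geq L_n$.

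For part (2), the $L^1$-convergence, I would first upgrade (i) to $L^1$: by (ii) bounded convergence gives $L_n\to\overline{h}$ in $L^1$, so in particular $\mathbb{E}[L_n]\to \overline{h}$. Combining this with (iii) yields $\mathbb{E}[Y_n-L_n]\to 0$. But $Y_n-L_n\geq 0$, so $\mathbb{E}|Y_n-L_n|\to 0$, i.e.\ $Y_n-L_n\to 0$ in $L^1$. Adding back $L_n\to\overline{h}$ in $L^1$ gives $Y_n\to\overline{h}$ in $L^1$, which is assertion~(2).

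For part (1), the lower bound $\liminf_n Y_n\geq \overline{h}$ a.s.\ is immediate from $Y_n\geq L_n$ and $L_n\to\overline{h}$ a.s.\ For the matching upper bound I would invoke part~(2): $L^1$-convergence implies convergence in probability, so there exists a subsequence $(n_k)$ along which $Y_{n_k}\to \overline{h}$ almost surely, whence $\liminf_n Y_n\leq \liminf_k Y_{n_k}=\overline{h}$ almost surely. Combining the two inequalities gives $\liminf_n Y_n=\overline{h}$ a.s., which is assertion~(1).

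There is no real obstacle here: the whole argument is essentially an exercise in rearranging the estimates of Theorem~\ref{thm:L-limit}. The only subtle point is the direction of the inequality $Y_n\geq L_n$, which is what turns the Fatou-type lower bound into an almost-sure one; the positive gap $Y_n-L_n$ is then precisely what allows the convergence of means $\mathbb{E}[Y_n-L_n]\to 0$ to be promoted to $L^1$-convergence. We should emphasize that this method does not settle the stronger question of almost sure convergence of $Y_n$ itself, which (as noted in the introduction) remains open.
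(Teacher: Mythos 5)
Your proof is correct and self-contained. Since the paper itself only delegates to \cite[Corollary~3.9, Lemma~3.10]{gilch:11} without reproducing the argument, your reconstruction is a useful addition: you correctly identify the three reusable ingredients from the proof of Theorem~\ref{thm:L-limit} (the almost-sure convergence of $L_n$, the uniform boundedness of $L_n$ from Lemma~\ref{lemma:sum-bounds}, and the convergence of means from~(\ref{eq:entropy-final})), and the key pointwise inequality $Y_n\geq L_n$ is indeed what allows the nonnegative gap $Y_n-L_n$ to go to zero in $L^1$ once its mean does. For part~(1) there is a marginally shorter route that avoids the subsequence extraction: the chain~(\ref{eq:entropy-final}) already shows that $\int\liminf_n Y_n\,d\Prob=h_G=\overline{h}$, and since your pointwise bound gives $\liminf_n Y_n\geq\overline{h}$ almost surely, the nonnegative random variable $\liminf_n Y_n-\overline{h}$ has zero mean and hence vanishes almost surely. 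Both routes are valid; yours uses part~(2) as a stepping stone, while the alternative reads part~(1) directly off the Fatou inequality already present in Theorem~\ref{thm:L-limit}. Your closing remark that this argument does not yield almost-sure convergence of $Y_n$ itself is accurate and worth retaining.
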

\begin{proof}
The proofs are analogous to the proofs in \cite[Corollary 3.9, Lemma
3.10]{gilch:11}.
\end{proof}


\section{Asymptotic entropy of the random walk on the building}

\label{sec:entropy-building}

In this section we deduce existence and a formula for the asymptotic entropy of
the random walk on the building from the entropy of the retracted walk. The
following lemma establishes an important link between transition probabilities
of both random walks:

\begin{Lemma}\label{lemma:p-to-overline-p}
For all $x\in\Delta$, $n\geq 1$:
$$
\overline{p}^{(n)}\bigl(e,\rho(x)\bigr) = p^{(n)}(o,x)   q_{\rho(x)}
$$
\end{Lemma}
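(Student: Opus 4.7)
The plan is a direct computation exploiting isotropy and the fact that the retraction $\rho(x) = \delta(o,x)$ identifies $\{x : \rho(x) = w\}$ with the sphere $\Delta_w(o)$.

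First I would unpack the definition of the retracted walk: by construction $\overline{X}_n = \rho(X_n) = \delta(o, X_n)$, so the event $\{\overline{X}_n = \rho(x)\}$ (starting from $\overline{X}_0 = e$, equivalently $X_0 = o$) is the same as the event $\{X_n \in \Delta_{\rho(x)}(o)\}$. Summing single-chamber probabilities over this sphere gives
$$
\overline{p}^{(n)}\bigl(e,\rho(x)\bigr) = \PP\bigl[X_n \in \Delta_{\rho(x)}(o) \,\bigm|\, X_0 = o\bigr] = \sum_{y \in \Delta_{\rho(x)}(o)} p^{(n)}(o,y).
$$

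The key step is then to invoke isotropy at the $n$-step level: since $p(\cdot,\cdot)$ depends only on the Weyl distance, the same holds for all iterates $p^{(n)}(\cdot,\cdot)$, and the paper already records this as $p^{(n)}(x,y) = p_w^{(n)}/q_w$ whenever $\delta(x,y) = w$ (which is the content recalled just after the bounded range assumption). In particular each summand above equals $p^{(n)}(o,x)$ because every $y \in \Delta_{\rho(x)}(o)$ satisfies $\delta(o,y) = \rho(x) = \delta(o,x)$. Since $|\Delta_{\rho(x)}(o)| = q_{\rho(x)}$, the sum collapses to
$$
\overline{p}^{(n)}\bigl(e,\rho(x)\bigr) = q_{\rho(x)} \cdot p^{(n)}(o,x),
$$
which is the desired identity.

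There is no real obstacle here; the only thing worth checking carefully is the justification that the $n$-step probabilities depend only on $\delta$. Formally this can be seen by induction on $n$: the base case is the definition of isotropy, and the inductive step uses that for $\delta(x,y) = w$ one has
$$
p^{(n+1)}(x,y) = \sum_{z \in \Delta} p(x,z)\, p^{(n)}(z,y),
$$
together with axiom (B3) and the transitivity-up-to-cone-type statement already exploited in Proposition~\ref{prop:invariance}. Since the paper states this property as a standing fact (and it is also \cite[Proposition~2.1]{P1} combined with the isotropy hypothesis), it may simply be cited. Everything else is bookkeeping.
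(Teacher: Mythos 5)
Your proof is correct and is essentially the same argument as the paper's: both reduce the claim to the decomposition of $\overline{p}^{(n)}(e,\rho(x))$ as a sum of $p^{(n)}(o,y)$ over $y\in\Delta_{\rho(x)}(o)$, then apply $n$-step isotropy and $|\Delta_{\rho(x)}(o)|=q_{\rho(x)}$. The only cosmetic difference is that you derive this decomposition directly from the definition of the retracted walk, whereas the paper cites the convolution formula of \cite[Proposition~4.5]{gilch-mueller-parkinson:14} applied to $P^n$ and then observes that the intersection $\Delta_{\rho(x)}(o)\cap\Delta_w(o)$ is empty unless $w=\rho(x)$.
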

\begin{proof}
Recall that $p^{(n)}(o,x)=p^{(n)}_w/q_w$, where $w=\delta(o,x)$.
According to \cite[Proposition 4.5]{gilch-mueller-parkinson:14} (applied to $P^n$)  we have
$$
\overline{p}^{(n)}\bigl(e,\rho(x)\bigr) = \sum_{w\in W} a^e_{\rho(x),w}
q_w^{-1} p_w,
$$
where $a^e_{\rho(x),w}:= | \Delta_{\rho(x)}(o)\cap \Delta_w(o)|$ with
$\Delta_u(o)=\{y\in\Delta \mid \delta(o,y)=u\}$ for $u\in W$. If $y\in
\Delta_{\rho(x)}(o)\cap \Delta_w(o)$ then we must have $\rho(x)=\delta(o,y)=w$, and
therefore $a^e_{\rho(x),w}=|\Delta_{\rho(x)}(o)|=q_{\rho(x)}$. That
is,
$$
\overline{p}^{(n)}\bigl(e,\rho(x)\bigr) = q_{\rho(x)} q_{\rho(x)}^{-1}
p^{(n)}_{\rho(x)}=p^{(n)}(o,x) q_{\rho(x)}.
$$
\end{proof}
The following result gives the additional asymptotic information when switching
from the retracted walk to the random walk on the building:
\begin{Prop}\label{prop:hq-convergence}
$$
 \frac1n
\log q_{\overline{X}_n}
\quad \xrightarrow{n\to\infty} \quad \frac{\E_{\nu}[\log(q_{\cR_{1}^{-1} \cR_{2}})]}{\E_{\nu}[\e{2}-\e{1}]}:=h_{q}\quad \textrm{almost surely}.
$$
\end{Prop}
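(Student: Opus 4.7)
The strategy is to exploit the multiplicativity $q_w = q_{s_1}\cdots q_{s_\ell}$ for any reduced expression $w = s_1\cdots s_\ell$, which renders $\log q_{(\cdot)}$ additive along geodesics. Since the construction of the coverings guarantees $\mathcal{R}_k\in C(\mathcal{R}_{k-1})$ for all $k\geq 1$, each $\mathcal{R}_{k-1}$ lies on a geodesic from $e$ to $\mathcal{R}_k$, so with $u_i := \mathcal{R}_{i-1}^{-1}\mathcal{R}_i$ (the middle coordinate of $\mathbf{W}_i$) we obtain the telescoping identity
\[
\log q_{\mathcal{R}_k} \;=\; \log q_{\mathcal{R}_0} + \sum_{i=1}^{k}\log q_{u_i}.
\]
Since $(\mathbf{W}_i)_{i\geq 1}$ is an ergodic Markov chain by Lemma~\ref{lem:W is ergodic}, Birkhoff's ergodic theorem applied to the functional $(\bT,u,y)\mapsto \log q_u$ yields $\tfrac{1}{k}\log q_{\mathcal{R}_k}\to \E_\nu[\log q_{\mathcal{R}_1^{-1}\mathcal{R}_2}]$ almost surely (the initial term $\log q_{\mathcal{R}_0}$ is a.s.\ finite and disappears after dividing by $k$).

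Next, I would pass from $\mathcal{R}_k$ to $\overline{X}_{\e{k}}$ and then to $\overline{X}_n$. Additivity applied to $\overline{X}_{\e{k}}\in C(\mathcal{R}_k)$ gives $\log q_{\overline{X}_{\e{k}}} - \log q_{\mathcal{R}_k} = \log q_{\mathcal{R}_k^{-1}\overline{X}_{\e{k}}} \leq d(\mathcal{R}_k,\overline{X}_{\e{k}})\cdot\log(\max_s q_s)$, which divided by $k$ vanishes almost surely by Lemma~\ref{lem:root-last-entry-distance}. Combined with $\e{k}/k\to\E_\nu[\e{2}-\e{1}]$ from Lemma~\ref{lem:e_k/k-convergence}, this gives $\log q_{\overline{X}_{\e{k}}}/\e{k} \to h_q$ almost surely. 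For general $n$ I would use $\mathbf{k}(n)$ from \eqref{equ:def-k(n)} together with the elementary observation---following from axiom~(B2) iterated and the bounded-range assumption---that every single step of the retracted walk moves at most $L_0$ in the Cayley graph of~$W$, hence changes $\log q$ by at most $L_0\log(\max_s q_s)$. This yields the crude bound
\[
\bigl|\log q_{\overline{X}_n} - \log q_{\overline{X}_{\e{\mathbf{k}(n)}}}\bigr| \;\leq\; (n-\e{\mathbf{k}(n)})\cdot L_0\log(\max_s q_s),
\]
and dividing by $n$ and invoking $(n-\e{\mathbf{k}(n)})/n\to 0$ a.s.\ from \eqref{equ:n-e-convergence} concludes the argument.

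The only genuine technical point is the $\nu$-integrability of the functional $\log q_u$ required in the first step. Since $\log q_{u_2} \leq \ell(u_2)\log(\max_s q_s)$ and the length $\ell(u_2) = d(\mathcal{R}_1,\mathcal{R}_2)$ is controlled by $L_0(\e{2}-\e{1})$ plus bounded correction terms coming from $d(\mathcal{R}_i,\overline{X}_{\e{i}})$, this integrability ultimately rests on the uniform exponential-moment bound for $\e{k}-\e{k-1}$ in Lemma~\ref{lem:last-entry-exponential-moments}, transferred to the stationary regime via standard arguments. Everything else reduces to routine combinations of the ergodic theorem and the already-established convergences for the retracted walk.
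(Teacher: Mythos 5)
Your argument is correct and follows essentially the same route as the paper: telescope $\log q_{\mathcal{R}_k}=\sum_{i}\log q_{\mathcal{R}_{i-1}^{-1}\mathcal{R}_i}$ along the nested cone roots, invoke the ergodic theorem for the Markov chain $(\W_k)_{k\geq 1}$, and absorb the correction terms using Lemma~\ref{lem:root-last-entry-distance}, Lemma~\ref{lem:e_k/k-convergence}, $(\ref{equ:n-e-convergence})$, and $\mathbf{k}(n)/n\to\E_\nu[\e{2}-\e{1}]^{-1}$. The only cosmetic difference is that the paper writes $\log q_{\overline{X}_n}=\log q_{\mathcal{R}_{\mathbf{k}(n)}}+\log q_{\mathcal{R}_{\mathbf{k}(n)}^{-1}\overline{X}_n}$ in one stroke (using $\overline{X}_n\in C(\mathcal{R}_{\mathbf{k}(n)})$ for $n\ge\e{\mathbf{k}(n)}$), whereas you split the residue through $\overline{X}_{\e{\mathbf{k}(n)}}$; your closing remark on the $\nu$-integrability of $\log q_u$ is a small gap-filler the paper leaves implicit via $\E_\nu[d(\mathcal{R}_2,\mathcal{R}_1)]<\infty$.
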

\begin{proof}
Recall the definition of $\mathbf{k}(n)$ from (\ref{equ:def-k(n)}) and recall that $\mathcal{R}_k$ is the root of the cone associated with
$\overline{X}_{\e{k}}$. In particular, there is a shortest path from
$o$ to  $\overline{X}_{\e{k}}$ passing through $\mathcal{R}_k$. This yields
$$
\log q_{\mathcal{R}_{\mathbf{k}(n)}}\leq 
\log q_{\overline{X}_{n}} = \log q_{\mathcal{R}_{\mathbf{k}(n)}}+\log
q_{\mathcal{R}_{\mathbf{k}(n)}^{-1}\overline{X}_{n}}.
$$
Moreover
$$
\log q_{\mathcal{R}_k} = \sum_{i=0}^k \log q_{\mathcal{R}_{i-1}^{-1}\mathcal{R}_i},
$$
where $\mathcal{R}_{-1}:=e$. Then by the ergodic theorem
$$
\log q_{\mathcal{R}_k} \quad \xrightarrow{k\to\infty} \quad \E_{\nu}[\log(q_{\cR_{1}^{-1} \cR_{2}})]\quad \textrm{ almost surely.}
$$
Furthermore, we have
$$
1\leq q_{\mathcal{R}_{\mathbf{k}(n)}^{-1}\overline{X}_{n}}\leq \bigl(\max_{s\in S} q_s\bigr)^{d(\mathcal{R}_{\mathbf{k}(n)},\overline{X}_{n})}.
$$
By Lemma \ref{lem:root-last-entry-distance} and due to $\mathbf{k}(n)\leq n$, we have
\begin{equation}\label{equ:dist1}
\frac{d(\mathcal{R}_{\mathbf{k}(n)},\overline{X}_{\e{\mathbf{k}(n)}})}{n}
=
\frac{d(\mathcal{R}_{\mathbf{k}(n)},\overline{X}_{\e{\mathbf{k}(n)}})}{\mathbf{k}(n)}\frac{\mathbf{k}(n)}{n}
\xrightarrow{n\to\infty} 0 \quad \textrm{almost surely.}
\end{equation}
Since $0\leq d(\overline{X}_{\e{\mathbf{k}(n)}},\overline{X}_n)\leq
(n-\e{\mathbf{k}(n)})L_0$, we have due to (\ref{equ:n-e-convergence}) that
$\frac1n d(\overline{X}_{\e{\mathbf{k}(n)}},\overline{X}_n)\to 0$ as
$n\to\infty$ almost surely.
This in turn implies together with (\ref{equ:dist1}) that
$$
\frac{d(\mathcal{R}_{\mathbf{k}(n)},\overline{X}_n)}{n}\xrightarrow{n\to\infty} 0 \quad \textrm{almost surely.}
$$
Therefore,
$$
\frac{1}{n} \log q_{\mathcal{R}_{\mathbf{k}(n)}^{-1}\overline{X}_{n}}
\xrightarrow{n\to\infty} 0 \quad 
\textrm{ almost surely}.
$$
Observe that from the proof of Theorem \ref{thm:l-entropy} follows that
$\mathbf{k}(n)/n\to \E_{\nu}[\e{2}-\e{1}]^{-1}$ as $n\to\infty$. Now we can conclude:
$$
\lim_{n\to\infty} \frac1n \log q_{\overline{X}_n}=\lim_{n\to\infty}
\frac{\mathbf{k}(n)}{n}\frac{1}{\mathbf{k}(n)} \log
q_{\mathcal{R}_{\mathbf{k}(n)}}=  \frac{\E_{\nu}[\log(q_{\cR_{1}^{-1} \cR_{2}})]}{\E_{\nu}[\e{2}-\e{1}]}.
$$
\end{proof}
By (\ref{equ:drift-formula}) we can also write
$$
h_q=\frac{\E_{\nu}[\log(q_{\cR_{1}^{-1} \cR_{2}})] \cdot v}{\mathbb{E}_\nu[d(\mathcal{R}_2,\mathcal{R}_1)]}.
$$
Since $0\leq \frac1n \log q_{\overline{X}_n}\leq \max_{s\in S} q_s$ we also
have $\lim_{n\to\infty} \frac1n \mathbb{E}\bigr[\log
q_{\overline{X}_n}\bigr]=h_q$. Now we can conclude:
\begin{thm}\label{thm:building-entropy}
The asymptotic entropy of the random walk on the building exists and is given by
$$
h=\overline{h}+h_q.
$$
\end{thm}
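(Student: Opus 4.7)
The plan is to use Lemma \ref{lemma:p-to-overline-p} to decompose $\log \pi_n(X_n)$ into a sum of two terms whose asymptotic behaviors have already been established. Writing $w = \delta(o, X_n) = \overline{X}_n$, we have $\pi_n(X_n) = p^{(n)}(o, X_n) = p_w^{(n)}/q_w$, and Lemma \ref{lemma:p-to-overline-p} gives $\overline{\pi}_n(\overline{X}_n) = \overline{p}^{(n)}(e, \overline{X}_n) = p^{(n)}(o, X_n) \cdot q_{\overline{X}_n}$. Thus almost surely
$$
-\frac{1}{n} \log \pi_n(X_n) = -\frac{1}{n} \log \overline{\pi}_n(\overline{X}_n) + \frac{1}{n} \log q_{\overline{X}_n}.
$$

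Taking expectations and passing to limits, I would invoke Theorem \ref{thm:L-limit} (with its consequences in Corollary \ref{cor:overline-h-convergence-types}) to conclude that the first term on the right converges to $\overline{h}$, and invoke Proposition \ref{prop:hq-convergence} to handle the second term. For the second term, note that $1 \leq q_{\overline{X}_n} \leq (\max_{s \in S} q_s)^{\ell(\overline{X}_n)}$, and since $(X_n)$ has bounded range we have $\ell(\overline{X}_n) \leq n L_0$ almost surely, so $\frac{1}{n} \log q_{\overline{X}_n}$ is uniformly bounded. The almost sure convergence $\frac{1}{n}\log q_{\overline{X}_n} \to h_q$ from Proposition \ref{prop:hq-convergence} then upgrades to convergence in expectation by the bounded convergence theorem, as already remarked just before the theorem statement.

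Adding the two limits gives existence of
$$
h = \lim_{n \to \infty} -\frac{1}{n} \mathbb{E}\bigl[\log \pi_n(X_n)\bigr] = \overline{h} + h_q,
$$
which is the desired formula. No step here is a real obstacle: all of the substantive work (existence of $\overline{h}$ and identification with $h_G$, as well as the almost sure convergence $\frac{1}{n}\log q_{\overline{X}_n} \to h_q$) has been carried out in the preceding sections, and the role of this final theorem is simply to assemble these ingredients via the pointwise identity above. The only thing one has to be mildly careful about is justifying the interchange of limit and expectation for the $q_{\overline{X}_n}$ term, which is immediate from its uniform boundedness thanks to the bounded range hypothesis on the walk.
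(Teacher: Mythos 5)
Your proof is correct and follows essentially the same route as the paper: both use Lemma \ref{lemma:p-to-overline-p} to decompose $-\frac{1}{n}\log\pi_n(X_n)$ into the retracted-walk entropy term and the $\frac{1}{n}\log q_{\overline{X}_n}$ term, then invoke Theorem \ref{thm:L-limit} and Proposition \ref{prop:hq-convergence} (with bounded convergence for the latter, exactly as the paper notes in the sentence preceding the theorem). Your write-up is somewhat more explicit than the paper's terse version but contains no additional ideas and no gaps.
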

\begin{proof}
Recall that Lemma \ref{lemma:p-to-overline-p} gives 
$\overline{p}^{(n)}\bigl(e,\overline{X}_n\bigr) = p^{(n)}(o,X_n)  q_{\overline{X}_n}$,
implying
$$
-\frac1n \mathbb{E}\bigl[\log
\overline{p}^{(n)}\bigl(e,\overline{X}_n\bigr)\bigr] = -\frac1n \mathbb{E}\bigl[\log
p^{(n)}\bigl(o,\overline{X}_n\bigr)\bigr]  -\frac1n
\mathbb{E}\bigl[\log q_{\overline{X}_n}\bigr].
$$
Together with Theorem \ref{thm:L-limit} and Proposition \ref{prop:hq-convergence} we get the proposed claim.
\end{proof}

We note that an analogous result to Theorem~\ref{thm:building-entropy} is obtained in Ledrappier and Lim \cite[Theorem~1.1]{LL:10} for \textit{volume entropy} of a hyperbolic building. These concepts are quite different, although it is interesting to note the similar forms of the formulae.

\begin{cor}\label{cor:buildinggreen}
The entropy equals the rate of escape with respect to the Green distance of the random
walk $(X_n)_{n\geq 0}$.
\end{cor}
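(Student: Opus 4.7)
The plan is to transfer the retracted-walk identity from Theorem~\ref{thm:L-limit} (and the accompanying rate-of-escape interpretation in Theorem~\ref{thm:l-entropy}) back up to the building by exploiting the simple multiplicative relationship between the Green functions $G$ and $\overline{G}$ that falls out of Lemma~\ref{lemma:p-to-overline-p}. Concretely, summing the identity $\overline{p}^{(n)}(e,\rho(x))=p^{(n)}(o,x)\,q_{\rho(x)}$ over $n\geq 0$ gives
$$
\overline{G}(e,\rho(x))=G(o,x)\,q_{\rho(x)}\qquad\text{for every }x\in\Delta,
$$
so upon substituting $x=X_n$ (recalling $\rho(X_n)=\overline{X}_n$) and taking logarithms one obtains the pathwise decomposition
$$
-\frac{1}{n}\log G(o,X_n)=-\frac{1}{n}\log\overline{G}(e,\overline{X}_n)+\frac{1}{n}\log q_{\overline{X}_n}.
$$

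With this splitting in hand the conclusion follows by applying our existing convergence results to each term separately. For the first term, Theorem~\ref{thm:l-entropy} together with Theorem~\ref{thm:L-limit} shows that $-\frac{1}{n}\log\overline{G}(e,\overline{X}_n)=\ell_G(\overline{X}_n)/n$ converges almost surely to $\overline{h}$. For the second term, Proposition~\ref{prop:hq-convergence} gives that $\frac{1}{n}\log q_{\overline{X}_n}\to h_q$ almost surely. Summing these two limits and invoking the identity $h=\overline{h}+h_q$ from Theorem~\ref{thm:building-entropy} yields
$$
\lim_{n\to\infty}-\frac{1}{n}\log G(o,X_n)=\overline{h}+h_q=h\qquad\text{almost surely,}
$$
which is precisely the claim that $h$ equals the rate of escape of $(X_n)_{n\geq 0}$ with respect to the Green distance.

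There is no genuine obstacle here; the corollary is essentially a bookkeeping exercise, with the only non-trivial ingredients being the already-established retracted-walk Green-distance theorem and the control on $q_{\overline{X}_n}$. The one detail worth flagging is the need to verify that the multiplicative factor $q_{\rho(x)}$ in Lemma~\ref{lemma:p-to-overline-p}, which is an $n$-independent constant in $x$, passes cleanly through the summation defining the Green functions; this is immediate from Fubini/monotone convergence, and no issue of convergence arises because both series converge absolutely on account of $\varrho(P)=\varrho(\overline{P})<1$.
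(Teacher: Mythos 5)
Your proof is correct and follows essentially the same route as the paper's: apply Lemma~\ref{lemma:p-to-overline-p} to obtain the pointwise identity $\log G(o,X_n)=\log\overline{G}(e,\overline{X}_n)-\log q_{\overline{X}_n}$, then invoke the retracted-walk Green-distance result (Theorems~\ref{thm:l-entropy} and~\ref{thm:L-limit}) for the first term, Proposition~\ref{prop:hq-convergence} for the second, and Theorem~\ref{thm:building-entropy} to recognise the sum as $h$. The paper's own proof is terser, citing only Theorems~\ref{thm:L-limit} and~\ref{thm:building-entropy}, but the underlying ingredients are exactly the ones you spell out.
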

\begin{proof}
By Lemma \ref{lemma:p-to-overline-p} we get
$$
\log \sum_{m\geq 0} p^{(m)}(o,X_n)=\log \sum_{m\geq 0}
\frac{\overline{p}^{(m)}(e,\overline{X}_n)}{q_{\overline{X}_n}}= \log
G(e,\overline{X}_n|1) -\log q_{\overline{X}_n}
$$
and the claim follows now with Theorems \ref{thm:L-limit} and \ref{thm:building-entropy}.
\end{proof}
Immediate consequences of Corollary \ref{cor:overline-h-convergence-types} are the following.
\begin{cor}\label{cor:h-convergence-types}
We have the following types of convergence:
\begin{enumerate}
\item 
$$
h=\liminf_{n\to\infty} -\frac1n \log \pi_n(X_n)\quad
\textrm{ almost surely.}
$$
\item 
$$
-\frac{1}{n}\log \pi_n(X_n) \xrightarrow{L_1} h.
$$
\end{enumerate}
\end{cor}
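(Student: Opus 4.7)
The plan is to reduce both statements to the corresponding statements for the retracted walk given in Corollary~\ref{cor:overline-h-convergence-types}, via the algebraic identity provided by Lemma~\ref{lemma:p-to-overline-p}. Taking logarithms of $\overline{p}^{(n)}(e,\overline{X}_n) = p^{(n)}(o,X_n)\cdot q_{\overline{X}_n}$ and dividing by $-n$ gives the pointwise identity
$$
-\frac{1}{n}\log \pi_n(X_n) = -\frac{1}{n}\log \overline{\pi}_n(\overline{X}_n) + \frac{1}{n}\log q_{\overline{X}_n},
$$
valid almost surely. Since Theorem~\ref{thm:building-entropy} asserts $h=\overline{h}+h_q$, each convergence for $-\frac{1}{n}\log\pi_n(X_n)$ will follow by combining the corresponding convergence of $-\frac{1}{n}\log\overline{\pi}_n(\overline{X}_n)$ (to $\overline{h}$) with the convergence of $\frac{1}{n}\log q_{\overline{X}_n}$ (to $h_q$).

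For part~(1), I would apply Corollary~\ref{cor:overline-h-convergence-types}(1), which gives $\overline{h} = \liminf_{n\to\infty} -\frac{1}{n}\log\overline{\pi}_n(\overline{X}_n)$ almost surely, together with Proposition~\ref{prop:hq-convergence}, which gives the \emph{full} limit $\frac{1}{n}\log q_{\overline{X}_n} \to h_q$ almost surely. Since one summand converges (not merely has a $\liminf$), the $\liminf$ of the sum equals the sum of the $\liminf$ and the limit, yielding $\liminf_{n\to\infty} -\frac{1}{n}\log \pi_n(X_n) = \overline{h}+h_q = h$ almost surely.

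For part~(2), Corollary~\ref{cor:overline-h-convergence-types}(2) gives $-\frac{1}{n}\log\overline{\pi}_n(\overline{X}_n) \to \overline{h}$ in $L_1$. For the second summand, note that $d(e,\overline{X}_n)\leq nL_0$ and $q_w = q_{s_1}\cdots q_{s_\ell}$ for a reduced expression, so
$$
0 \leq \frac{1}{n}\log q_{\overline{X}_n} \leq L_0 \log\bigl(\max_{s\in S} q_s\bigr),
$$
which is a deterministic bound. Combined with the almost sure convergence from Proposition~\ref{prop:hq-convergence}, dominated convergence yields $\frac{1}{n}\log q_{\overline{X}_n} \to h_q$ in $L_1$. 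Adding the two $L_1$-convergences gives $-\frac{1}{n}\log\pi_n(X_n) \to \overline{h}+h_q = h$ in $L_1$.

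There is no serious obstacle here: the work has already been done in establishing Corollary~\ref{cor:overline-h-convergence-types} for the retracted walk, and the bounded-range hypothesis (\ref{eq:L0}) together with regularity of the building gives the uniform deterministic bound on $\frac{1}{n}\log q_{\overline{X}_n}$ needed to lift $L_1$-convergence from the retracted setting to the building. The only point requiring care is ensuring that the almost sure identity relating the two log-probabilities is applied before taking $\liminf$ in part~(1), so that the pointwise decomposition can be used without appealing to subadditivity arguments that may fail outside the group setting.
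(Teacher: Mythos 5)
Your proposal is correct, and it is precisely the argument the paper intends (the paper just declares the corollary an "immediate consequence" of Corollary~\ref{cor:overline-h-convergence-types} without writing out the details). You take logs in the pointwise identity $\overline{p}^{(n)}(e,\overline{X}_n)=p^{(n)}(o,X_n)\,q_{\overline{X}_n}$ from Lemma~\ref{lemma:p-to-overline-p}, split $-\frac1n\log\pi_n(X_n)$ into the retracted term plus $\frac1n\log q_{\overline{X}_n}$, and then combine Corollary~\ref{cor:overline-h-convergence-types} with Proposition~\ref{prop:hq-convergence}; the two key technical observations you make — that $\liminf$ of a sum splits because one summand has a full a.s.\ limit, and that the deterministic bound $0\le\frac1n\log q_{\overline{X}_n}\le L_0\log(\max_{s\in S}q_s)$ from bounded range upgrades a.s.\ convergence to $L_1$ convergence — are exactly what is needed and are both sound.
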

\begin{appendix}

\section{Normal form automata}\label{app:A}

The aim of this appendix is to prove Theorem~\ref{thm:strongly-connected}. That is, we show that for each Fuchsian Coxeter system $(W,S)$ the strongly automatic structure $\mathfrak{A}(W,S)$ is strongly connected. In this appendix we refer to $\mathfrak{A}=\mathfrak{A}(W,S)$ as the \textit{normal form} automata for $(W,S)$. Our starting point is \cite[Appendix~A]{gilch-mueller-parkinson:14}, where the Cannon automaton $\cA(W,S)$ for each Fuchsian Coxeter system is explicitly constructed. 

\begin{proof}[Proof of Theorem~\ref{thm:strongly-connected}]
It is convenient to divide the set of all Fuchsian Coxeter systems $(W,S)$ into $4$ classes. First consider triangle groups (that is, $|S|=3$). Thus $S=\{s,t,u\}$, and we write $a=m_{st}$, $b=m_{tu}$, and $c=m_{us}$. Renaming the generators if necessary we may assume that $a\geq b\geq c\geq 2$. Then $(W,S)$ is Fuchsian if and only if one of the following occurs:
\begin{itemize}
\item $a\geq b\geq c\geq 3$ with $a\neq 3$ (we call these \textit{Class I} groups). 
\item $a\geq b\geq 4$ and $c=2$ with $a>4$ (we call these \textit{Class II} groups).
\item $a> 6$, $b=3$, and $c=2$ (we call these \textit{Class III} groups). 
\end{itemize}
The remaining Fuchsian Coxeter systems are those with $|S|\geq 4$. We call these Fuchsian Coxeter systems of \textit{Class IV}. 

We now construct the normal form of the Cannon automaton for each class (the diagrams in \cite[Appendix~A]{gilch-mueller-parkinson:14} are useful here). Suppose that $(W,S)$ is a Class I Fuchsian Coxeter system. It follows from Tits' solution to the word problem in Coxeter groups~\cite{Ti:69} 
that a normal form automaton for $(W,S)$ is obtained by removing the following three arrows from the Cannon automaton
\begin{align*}
\underbrace{tst\cdots}_{\text{$a-1$ terms}}&\rightarrow\,\, \underbrace{tst\cdots}_{\text{$a$ terms}} & \underbrace{utu\cdots}_{\text{$b-1$ terms}}&\rightarrow\,\, \underbrace{utu\cdots}_{\text{$b$ terms}} & \underbrace{usu\cdots}_{\text{$c-1$ terms}}&\rightarrow\,\, \underbrace{usu\cdots}_{\text{$c$ terms}}
\end{align*}
This is illustrated below in Figure~\ref{fig:caseI}.

\begin{figure}[!h]
\begin{minipage}{0.65\textwidth}
\centering
\begin{tikzpicture} [scale=6]
\draw [->,dotted,-triangle 45] (0,0) -- (0,0.25); 
\draw [->,-triangle 45] (0,0) -- ({0.25*cos(-30)},{0.25*sin(-30)});
\draw [->,dashed,-triangle 45] (0,0) -- ({0.25*cos(210)},{0.25*sin(210)});
\draw [->,domain=135:170,-triangle 45] plot ({0.3*cos(\x)},{0.3*sin(\x)-0.335});
\draw [->,domain=170:205,dashed,-triangle 45] plot ({0.3*cos(\x)},{0.3*sin(\x)-0.335});
\draw [->,domain=205:240,-triangle 45] plot ({0.3*cos(\x)},{0.3*sin(\x)-0.335});
\draw [-|,dashed,domain=240:255] plot ({0.3*cos(\x)},{0.3*sin(\x)-0.335});
\draw [->,domain=45:10,dashed,-triangle 45] plot ({0.3*cos(\x)},{0.3*sin(\x)-0.335});
\draw [->,domain=10:-25,-triangle 45] plot ({0.3*cos(\x)},{0.3*sin(\x)-0.335});
\draw [->,domain=-25:-60,dashed,-triangle 45] plot ({0.3*cos(\x)},{0.3*sin(\x)-0.335});
\draw [->,gray,dashed,domain=-60:-90,-triangle 45] plot ({0.3*cos(\x)},{0.3*sin(\x)-0.335});
\draw [->,dotted,-triangle 45] ({0.3*cos(-90)},{0.3*sin(-90)-0.335}) -- ({0.3*cos(-90)},{0.3*sin(-90)-0.335-0.16});
\draw [->,-triangle 45] ({0.3*cos(-90)},{0.3*sin(-90)-0.335-0.16}) -- ({0.3*cos(-90)-0.16},{0.3*sin(-90)-0.335-0.16});
\draw [->,dashed,-triangle 45] ({0.3*cos(-90)},{0.3*sin(-90)-0.335-0.16}) -- ({0.3*cos(-90)+0.16},{0.3*sin(-90)-0.335-0.16});
\draw [->,dotted,-triangle 45] ({0.3*cos(170)},{0.3*sin(170)-0.335}) -- ({0.2*cos(170)},{0.2*sin(170)-0.335});
\draw [->,dotted,-triangle 45] ({0.3*cos(205)},{0.3*sin(205)-0.335}) -- ({0.2*cos(205)},{0.2*sin(205)-0.335});
\draw [->,dotted,-triangle 45] ({0.3*cos(240)},{0.3*sin(240)-0.335}) -- ({0.2*cos(240)},{0.2*sin(240)-0.335});
\draw [->,dotted,-triangle 45] ({0.3*cos(10)},{0.3*sin(10)-0.335}) -- ({0.2*cos(10)},{0.2*sin(10)-0.335});
\draw [->,dotted,-triangle 45] ({0.3*cos(-25)},{0.3*sin(-25)-0.335}) -- ({0.2*cos(-25)},{0.2*sin(-25)-0.335});
\draw [->,dotted,-triangle 45] ({0.3*cos(-60)},{0.3*sin(-60)-0.335}) -- ({0.2*cos(-60)},{0.2*sin(-60)-0.335});
\path (0,-0.05) node {\footnotesize{$\emptyset$}}
 ({0.25*cos(-30)-0.02},{0.25*sin(-30)-0.04}) node {\footnotesize{$1$}}
  ({-0.25*cos(-30)+0.02},{0.25*sin(-30)-0.04}) node {\footnotesize{$2$}}
  (-0.04,0.23) node {\footnotesize{$3$}}
   ({0.3*cos(10)+0.05},{0.3*sin(10)-0.335}) node {\footnotesize{$12$}}
    ({0.3*cos(-25)+0.06},{0.3*sin(-25)-0.335}) node {\footnotesize{$121$}}
   ({0.3*cos(-60)+0.03},{0.3*sin(-60)-0.335-0.06}) node {\footnotesize{$1212$}}
    ({-0.3*cos(10)-0.05},{0.3*sin(10)-0.335}) node {\footnotesize{$21$}}
    ({-0.3*cos(-25)-0.06},{0.3*sin(-25)-0.335}) node {\footnotesize{$212$}}
    ({-0.3*cos(-60)-0.03},{0.3*sin(-60)-0.335-0.06}) node {\footnotesize{$2121$}}
    ({-0.3*cos(-90)},{0.3*sin(-90)-0.335+0.05}) node {\footnotesize{$x$}}
      ({-0.3*cos(-90)},{0.3*sin(-90)-0.335+0.05-0.25}) node {\footnotesize{$x3$}}
        ({-0.3*cos(-90)+0.21},{0.3*sin(-90)-0.335-0.16}) node [gray]{\footnotesize{$232$}}
        ({-0.3*cos(-90)-0.21},{0.3*sin(-90)-0.335-0.16}) node [gray]{\footnotesize{$131$}}
       ({0.2*cos(10)-0.04},{0.2*sin(10)-0.335}) node [gray]{\footnotesize{$23$}}
    ({0.2*cos(-25)-0.04},{0.2*sin(-25)-0.335}) node [gray]{\footnotesize{$13$}}
    ({0.2*cos(-60)-0.04+0.01},{0.2*sin(-60)-0.335+0.025}) node [gray]{\footnotesize{$23$}}
    ({-0.2*cos(10)+0.04},{0.2*sin(10)-0.335}) node [gray]{\footnotesize{$13$}}
    ({-0.2*cos(-25)+0.04},{0.2*sin(-25)-0.335}) node [gray]{\footnotesize{$23$}}
    ({-0.2*cos(-60)+0.04-0.01},{0.2*sin(-60)-0.335+0.025}) node [gray] {\footnotesize{$13$}};
\begin{scope}[rotate=120]
\draw [->,domain=135:170,dotted,-triangle 45] plot ({0.3*cos(\x)},{0.3*sin(\x)-0.335});
\draw [->,domain=170:205,-triangle 45] plot ({0.3*cos(\x)},{0.3*sin(\x)-0.335});
\draw [->,domain=205:240,dotted,-triangle 45] plot ({0.3*cos(\x)},{0.3*sin(\x)-0.335});
\draw [->,gray,dashed,domain=240:270,-triangle 45] plot ({0.3*cos(\x)},{0.3*sin(\x)-0.335});
\draw [->,domain=45:10,-triangle 45] plot ({0.3*cos(\x)},{0.3*sin(\x)-0.335});
\draw [->,domain=10:-25,dotted,-triangle 45] plot ({0.3*cos(\x)},{0.3*sin(\x)-0.335});
\draw [->,domain=-25:-60,-triangle 45] plot ({0.3*cos(\x)},{0.3*sin(\x)-0.335});
\draw [-|, dotted,domain=-60:-75] plot ({0.3*cos(\x)},{0.3*sin(\x)-0.335});
\draw [->,dashed,-triangle 45] ({0.3*cos(-90)},{0.3*sin(-90)-0.335}) -- ({0.3*cos(-90)},{0.3*sin(-90)-0.335-0.16});
\draw [->,dotted,-triangle 45] ({0.3*cos(-90)},{0.3*sin(-90)-0.335-0.16}) -- ({0.3*cos(-90)-0.16},{0.3*sin(-90)-0.335-0.16});
\draw [->,-triangle 45] ({0.3*cos(-90)},{0.3*sin(-90)-0.335-0.16}) -- ({0.3*cos(-90)+0.16},{0.3*sin(-90)-0.335-0.16});
\draw [->,dashed,-triangle 45] ({0.3*cos(170)},{0.3*sin(170)-0.335}) -- ({0.2*cos(170)},{0.2*sin(170)-0.335});
\draw [->,dashed,-triangle 45] ({0.3*cos(205)},{0.3*sin(205)-0.335}) -- ({0.2*cos(205)},{0.2*sin(205)-0.335});
\draw [->,dashed,-triangle 45] ({0.3*cos(240)},{0.3*sin(240)-0.335}) -- ({0.2*cos(240)},{0.2*sin(240)-0.335});
\draw [->,dashed,-triangle 45] ({0.3*cos(10)},{0.3*sin(10)-0.335}) -- ({0.2*cos(10)},{0.2*sin(10)-0.335});
\draw [->,dashed,-triangle 45] ({0.3*cos(-25)},{0.3*sin(-25)-0.335}) -- ({0.2*cos(-25)},{0.2*sin(-25)-0.335});
\draw [->,dashed,-triangle 45] ({0.3*cos(-60)},{0.3*sin(-60)-0.335}) -- ({0.2*cos(-60)},{0.2*sin(-60)-0.335});
\path 
   ({0.3*cos(10)+0.05},{0.3*sin(10)-0.335}) node {\footnotesize{$31$}}
    ({0.3*cos(-25)+0.06},{0.3*sin(-25)-0.335}) node {\footnotesize{$313$}}
    ({0.3*cos(-60)+0.03},{0.3*sin(-60)-0.335-0.06}) node {\footnotesize{$3131$}}
    ({-0.3*cos(10)-0.05},{0.3*sin(10)-0.335}) node {\footnotesize{$13$}}
    ({-0.3*cos(-25)-0.06},{0.3*sin(-25)-0.335}) node {\footnotesize{$131$}}
    ({-0.3*cos(-60)-0.03},{0.3*sin(-60)-0.335-0.06}) node {\footnotesize{$1313$}}
    ({-0.3*cos(-90)},{0.3*sin(-90)-0.335+0.05}) node {\footnotesize{$z$}}
      ({-0.3*cos(-90)},{0.3*sin(-90)-0.335+0.05-0.25}) node {\footnotesize{$z2$}}
        ({-0.3*cos(-90)+0.21},{0.3*sin(-90)-0.335-0.16}) node [gray] {\footnotesize{$121$}}
        ({-0.3*cos(-90)-0.21},{0.3*sin(-90)-0.335-0.16}) node [gray] {\footnotesize{$323$}}
       ({0.2*cos(10)-0.04},{0.2*sin(10)-0.335}) node [gray] {\footnotesize{$12$}}
    ({0.2*cos(-25)-0.04},{0.2*sin(-25)-0.335}) node [gray] {\footnotesize{$32$}}
    ({0.2*cos(-60)-0.04+0.01},{0.2*sin(-60)-0.335+0.025})  node [gray] {\footnotesize{$12$}}
    ({-0.2*cos(10)+0.04},{0.2*sin(10)-0.335}) node [gray]{\footnotesize{$32$}}
    ({-0.2*cos(-25)+0.04},{0.2*sin(-25)-0.335}) node [gray] {\footnotesize{$12$}}
    ({-0.2*cos(-60)+0.04-0.01},{0.2*sin(-60)-0.335+0.025})  node [gray] {\footnotesize{$32$}};
\end{scope}
\begin{scope}[rotate=-120]
\draw [->,domain=135:170,dashed,-triangle 45] plot ({0.3*cos(\x)},{0.3*sin(\x)-0.335});
\draw [->,domain=170:205,dotted,-triangle 45] plot ({0.3*cos(\x)},{0.3*sin(\x)-0.335});
\draw [->,domain=205:240,dashed,-triangle 45] plot ({0.3*cos(\x)},{0.3*sin(\x)-0.335});
\draw [-|,dotted,domain=240:255] plot ({0.3*cos(\x)},{0.3*sin(\x)-0.335});
\draw [->,domain=45:10,dotted,-triangle 45] plot ({0.3*cos(\x)},{0.3*sin(\x)-0.335});
\draw [->,domain=10:-25,dashed,-triangle 45] plot ({0.3*cos(\x)},{0.3*sin(\x)-0.335});
\draw [->,domain=-25:-60,dotted,-triangle 45] plot ({0.3*cos(\x)},{0.3*sin(\x)-0.335});
\draw [->,gray,dashed,domain=-60:-90,-triangle 45] plot ({0.3*cos(\x)},{0.3*sin(\x)-0.335});
\draw [->,-triangle 45] ({0.3*cos(-90)},{0.3*sin(-90)-0.335}) -- ({0.3*cos(-90)},{0.3*sin(-90)-0.335-0.16});
\draw [->,dashed,-triangle 45] ({0.3*cos(-90)},{0.3*sin(-90)-0.335-0.16}) -- ({0.3*cos(-90)-0.16},{0.3*sin(-90)-0.335-0.16});
\draw [->,dotted,-triangle 45] ({0.3*cos(-90)},{0.3*sin(-90)-0.335-0.16}) -- ({0.3*cos(-90)+0.16},{0.3*sin(-90)-0.335-0.16});
\draw [->,-triangle 45] ({0.3*cos(170)},{0.3*sin(170)-0.335}) -- ({0.2*cos(170)},{0.2*sin(170)-0.335});
\draw [->,-triangle 45] ({0.3*cos(205)},{0.3*sin(205)-0.335}) -- ({0.2*cos(205)},{0.2*sin(205)-0.335});
\draw [->,-triangle 45] ({0.3*cos(240)},{0.3*sin(240)-0.335}) -- ({0.2*cos(240)},{0.2*sin(240)-0.335});
\draw [->,-triangle 45] ({0.3*cos(10)},{0.3*sin(10)-0.335}) -- ({0.2*cos(10)},{0.2*sin(10)-0.335});
\draw [->,-triangle 45] ({0.3*cos(-25)},{0.3*sin(-25)-0.335}) -- ({0.2*cos(-25)},{0.2*sin(-25)-0.335});
\draw [->,-triangle 45] ({0.3*cos(-60)},{0.3*sin(-60)-0.335}) -- ({0.2*cos(-60)},{0.2*sin(-60)-0.335});
\path 
   ({0.3*cos(10)+0.05},{0.3*sin(10)-0.335}) node {\footnotesize{$23$}}
    ({0.3*cos(-25)+0.06},{0.3*sin(-25)-0.335}) node {\footnotesize{$232$}}
    ({0.3*cos(-60)+0.03},{0.3*sin(-60)-0.335-0.06}) node {\footnotesize{$2323$}}
    ({-0.3*cos(10)-0.05},{0.3*sin(10)-0.335}) node {\footnotesize{$32$}}
    ({-0.3*cos(-25)-0.06},{0.3*sin(-25)-0.335}) node {\footnotesize{$323$}}
   ({-0.3*cos(-60)-0.03},{0.3*sin(-60)-0.335-0.06}) node {\footnotesize{$3232$}}
    ({-0.3*cos(-90)},{0.3*sin(-90)-0.335+0.05}) node {\footnotesize{$y$}}
      ({-0.3*cos(-90)},{0.3*sin(-90)-0.335+0.05-0.25}) node {\footnotesize{$y1$}}
        ({-0.3*cos(-90)+0.21},{0.3*sin(-90)-0.335-0.16}) node [gray] {\footnotesize{$313$}}
        ({-0.3*cos(-90)-0.21},{0.3*sin(-90)-0.335-0.16}) node [gray] {\footnotesize{$212$}}
       ({0.2*cos(10)-0.04},{0.2*sin(10)-0.335}) node  [gray] {\footnotesize{$31$}}
    ({0.2*cos(-25)-0.04},{0.2*sin(-25)-0.335}) node [gray] {\footnotesize{$21$}}
    ({0.2*cos(-60)-0.04+0.01},{0.2*sin(-60)-0.335+0.025})  node [gray] {\footnotesize{$31$}}
    ({-0.2*cos(10)+0.04},{0.2*sin(10)-0.335}) node [gray] {\footnotesize{$21$}}
    ({-0.2*cos(-25)+0.04},{0.2*sin(-25)-0.335}) node [gray] {\footnotesize{$31$}}
    ({-0.2*cos(-60)+0.04-0.01},{0.2*sin(-60)-0.335+0.025})  node [gray] {\footnotesize{$21$}};
\end{scope}
\end{tikzpicture}
\caption{Normal form automaton for Class I}\label{fig:caseI}
\end{minipage}\begin{minipage}{0.3\textwidth}
The generators are labelled $1$, $2$, and~$3$, and the labels on the edges are indicated by line styles, with $1$ being solid, $2$ being dashed, and $3$ being dotted. The cone types are given by the base element of a representative cone of that type. The cone types in grey are duplicates, and the reader should instead imagine the arrow pointing to the corresponding cone type in black. Furthermore, $x=121\cdots$, $y=232\cdots$ and $z=131\cdots$ are the longest elements of the parabolic subgroups $W_{12}$, $W_{23}$, and $W_{13}$ respectively, and the `deleted' arrows (from the Cannon automaton) are indicated with $\dashv$. 
\end{minipage}
\end{figure}

Similarly, the normal form automaton for a Class II Fuchsian Coxeter system is obtained by removing the following arrows from the Cannon automaton:
\begin{align*}
xs&\xrightarrow{s} x & xsu&\xrightarrow{s}xu&yu&\xrightarrow{u}y & yus&\xrightarrow{u}ys&u&\xrightarrow{s}su
\end{align*}
(where $x=sts\cdots$ is the longest element of $\langle s,t\rangle$ and $y=tut\cdots$ is the longest element of $\langle t,u\rangle$) and for Class III we remove the following arrows:
\begin{align*}
xs&\xrightarrow{s}x&xsu&\xrightarrow{s}xu&xtut&\xrightarrow{u}xut&xtuts&\xrightarrow{u}xuts&xtus&\xrightarrow{s}xtu&ut&\xrightarrow{u}tut&uts&\xrightarrow{u}tuts&u&\xrightarrow{s}su
\end{align*}
(where $x=sts\cdots$ is the longest element of $\langle s,t\rangle$). The resulting automata are illustrated in the figures below (for $b=4$ in Class II there are some very minor modifications required, and the reader is referred to \cite[Appendix~A]{gilch-mueller-parkinson:14}).

\begin{figure}[!h]
\centering
\begin{tikzpicture} [scale=2.7]
\node {\begin{tikzpicture} [scale=2.5]
\draw [dashed,->,-triangle 45] (0,-0.21) -- ({cos(23.15)-0.92},{sin(23.15)});
\draw [dotted,->,-triangle 45] (0,-0.21) -- ({cos(310)-0.92},{sin(310)});
\draw [->,-triangle 45] (0,-0.21) -- ({-cos(310)+0.92},{sin(310)});
%
\draw [dotted,->,-triangle 45] ({-cos(310)+0.92},{sin(310)}) -- (0,-1.15);
\draw [dashed,->,-triangle 45] (0,-1.15) -- (0,-1.5);
\draw [->,-triangle 45] (0,-1.5) -- (-0.35,-1.5);
\draw [dotted,->,-triangle 45] (0,-1.5) -- (0.35,-1.5);
\draw [dotted,->,domain=23.15:45,-triangle 45] plot ({cos(\x)-0.92},{sin(\x)});
\draw [dashed,->,domain={45}:{45+20},-triangle 45] plot ({cos(\x)-0.92},{sin(\x)});
\draw [dotted,->,domain={45+1*20}:{45+20+1*20},-triangle 45] plot ({cos(\x)-0.92},{sin(\x)});
\draw [dashed,->,domain={45+2*20}:{45+20+2*20},-triangle 45] plot ({cos(\x)-0.92},{sin(\x)});
\draw [dashed,->,domain=160:180,-triangle 45] plot ({cos(\x)-0.92},{sin(\x)});
\draw [dotted,->,domain=140:160,-triangle 45] plot ({cos(\x)-0.92},{sin(\x)});
\draw [->,gray,dashed,domain=105:140,-triangle 45] plot ({cos(\x)-0.92},{sin(\x)});
\draw [->,domain=156.85:135,-triangle 45] plot ({cos(\x)+0.92},{sin(\x)});
\draw [dashed,->,domain={135}:{135-20},-triangle 45] plot ({cos(\x)+0.92},{sin(\x)});
\draw [->,domain={135-1*20}:{135-20-1*20},-triangle 45] plot ({cos(\x)+0.92},{sin(\x)});
\draw [dashed,->,domain={135-2*20}:{135-20-2*20},-triangle 45] plot ({cos(\x)+0.92},{sin(\x)});
\draw [dashed,->,domain=40:20,-triangle 45] plot ({cos(\x)+0.92},{sin(\x)});
\draw [->,gray,dashed,domain=75:40,-triangle 45] plot ({cos(\x)+0.92},{sin(\x)});
\draw [dashed,->,domain={310}:{290},-triangle 45] plot ({cos(\x)-0.92},{sin(\x)});
\draw [dotted,->,domain={310-1*20}:{290-1*20},-triangle 45] plot ({cos(\x)-0.92},{sin(\x)});
\draw [dashed,->,domain={310-2*20}:{290-2*20},-triangle 45] plot ({cos(\x)-0.92},{sin(\x)});
\draw [dashed,->,domain={220}:{200},-triangle 45] plot ({cos(\x)-0.92},{sin(\x)});
\draw [->,gray,dashed,domain=250:220,-triangle 45] plot ({cos(\x)-0.92},{sin(\x)});
\draw [dashed,->,domain={230}:{250},-triangle 45] plot ({cos(\x)+0.92},{sin(\x)});
\draw [->,domain={230+1*20}:{250+1*20},-triangle 45] plot ({cos(\x)+0.92},{sin(\x)});
\draw [dashed,->,domain={230+2*20}:{250+2*20},-triangle 45] plot ({cos(\x)+0.92},{sin(\x)});
\draw [dashed,->,domain={-20}:{0},-triangle 45] plot ({cos(\x)+0.92},{sin(\x)});
\draw [->,domain={-40}:{-20},-triangle 45] plot ({cos(\x)+0.92},{sin(\x)});
\draw [->,gray,dashed,domain={-70}:{-40},-triangle 45] plot ({cos(\x)+0.92},{sin(\x)});
\draw [->,-triangle 45] (-1.92,0) -- ({-1.92-0.35},0);
\draw [dashed,->,-triangle 45] ({-1.92-0.35},0) --  ({-1.92-2*0.35},0);
\draw [dotted,->,-triangle 45] ({-1.92-2*0.35},0) --  ({-1.92-3*0.35},0);
\draw [->,-triangle 45] ({-1.92-2*0.35},0) -- ({-1.92-2*0.35},0.35);
\draw [->,-triangle 45] ({cos(200)-0.92},{sin(200)}) -- ({-1-0.92-0.35},{sin(200)});
%
\draw [dashed,->,-triangle 45] ({-1-0.92-0.35},{sin(200)}) -- ({-1-0.92-2*0.35},{sin(200)});
\draw [dotted,->,-triangle 45] (1.92,0) -- ({1.92+0.35},0);
\draw [dashed,->,-triangle 45] ({+1.92+0.35},0) --  ({+1.92+2*0.35},0);
\draw [->,-triangle 45] ({+1.92+2*0.35},0) --  ({+1.92+3*0.35},0);
\draw [dotted,->,-triangle 45] ({+1.92+2*0.35},0) -- ({+1.92+2*0.35},-0.35);
\draw [dotted,->,-triangle 45] ({-cos(160)+0.92},{sin(160)}) -- ({1+0.92+0.35},{sin(160)});
%
\draw [dashed,->,-triangle 45] ({+1+0.92+0.35},{sin(160)}) -- ({+1+0.92+2*0.35},{sin(160)});
\draw [->,-triangle 45] ({cos(45)-0.92},{sin(45)}) -- ({0.7*cos(45)-0.92},{0.7*sin(45)});
\draw [->,-triangle 45] ({cos(65)-0.92},{sin(65)}) -- ({0.7*cos(65)-0.92},{0.7*sin(65)});
\draw [->,-triangle 45] ({cos(85)-0.92},{sin(85)}) -- ({0.7*cos(85)-0.92},{0.7*sin(85)});
\draw [->,-triangle 45] ({cos(105)-0.92},{sin(105)}) -- ({0.7*cos(105)-0.92},{0.7*sin(105)});
\draw [->,-triangle 45] ({cos(140)-0.92},{sin(140)}) -- ({0.7*cos(140)-0.92},{0.7*sin(140)});
\draw [->,-triangle 45] ({cos(160)-0.92},{sin(160)}) -- ({0.7*cos(160)-0.92},{0.7*sin(160)});
\draw [->,-triangle 45] ({cos(220)-0.92},{sin(220)}) -- ({0.7*cos(220)-0.92},{0.7*sin(220)});
\draw [->,-triangle 45] ({cos(250)-0.92},{sin(250)}) -- ({0.7*cos(250)-0.92},{0.7*sin(250)});
\draw [->,-triangle 45] ({cos(270)-0.92},{sin(270)}) -- ({0.7*cos(270)-0.92},{0.7*sin(270)});
\draw [->,-triangle 45] ({cos(290)-0.92},{sin(290)}) -- ({0.7*cos(290)-0.92},{0.7*sin(290)});
\draw [dotted,->,-triangle 45] ({-cos(45)+0.92},{sin(45)}) -- ({-0.7*cos(45)+0.92},{0.7*sin(45)});
\draw [dotted,->,-triangle 45] ({-cos(65)+0.92},{sin(65)}) -- ({-0.7*cos(65)+0.92},{0.7*sin(65)});
\draw [dotted,->,-triangle 45] ({-cos(85)+0.92},{sin(85)}) -- ({-0.7*cos(85)+0.92},{0.7*sin(85)});
\draw [dotted,->,-triangle 45] ({-cos(105)+0.92},{sin(105)}) -- ({-0.7*cos(105)+0.92},{0.7*sin(105)});
\draw [dotted,->,-triangle 45] ({-cos(140)+0.92},{sin(140)}) -- ({-0.7*cos(140)+0.92},{0.7*sin(140)});
\draw [dotted,->,-triangle 45] ({-cos(200)+0.92},{sin(200)}) -- ({-0.7*cos(200)+0.92},{0.7*sin(200)});
\draw [dotted,->,-triangle 45] ({-cos(220)+0.92},{sin(220)}) -- ({-0.7*cos(220)+0.92},{0.7*sin(220)});
\draw [dotted,->,-triangle 45] ({-cos(250)+0.92},{sin(250)}) -- ({-0.7*cos(250)+0.92},{0.7*sin(250)});
\draw [dotted,->,-triangle 45] ({-cos(270)+0.92},{sin(270)}) -- ({-0.7*cos(270)+0.92},{0.7*sin(270)});
\draw [dotted,->,-triangle 45] ({-cos(290)+0.92},{sin(290)}) -- ({-0.7*cos(290)+0.92},{0.7*sin(290)});
\path (0.08,-0.15) node {\footnotesize{$\emptyset$}}
(0.35,-0.7) node {\footnotesize{$1$}}
(-0.35,-0.7) node {\footnotesize{$3$}}
(0.08,0.38) node {\footnotesize{$2$}}
(0.14,-1.1) node {\footnotesize{$13$}}
(0,-1.6) node {\footnotesize{$132$}}
(-0.5,-1.5) node [gray] {\footnotesize{$121$}}
(+0.5,-1.5) node [gray] {\footnotesize{$323$}}
;
\path ({cos(45)-0.92+0.12},{sin(45)+0.06}) node {\footnotesize{$23$}}
({cos(65)-0.92+0.14},{sin(65)+0.06}) node {\footnotesize{$232$}}
({cos(85)-0.92},{sin(85)+0.1}) node {\footnotesize{$2323$}}
({cos(105)-0.92-0.05},{sin(105)+0.12}) node {\footnotesize{$23232$}}
({cos(140)-0.92-0.1},{sin(140)+0.08}) node {\footnotesize{$y23$}}
({cos(160)-0.92-0.1},{sin(160)+0.05}) node {\footnotesize{$y2$}}
({cos(180)-0.92+0.1},{sin(180)}) node {\footnotesize{$y$}}
({cos(200)-0.92+0.15},{sin(200)}) node {\footnotesize{$y3$}}
({cos(220)-0.92-0.15},{sin(220)}) node {\footnotesize{$y32$}}
({cos(255)-0.92-0.15},{sin(255)-0.1}) node {\footnotesize{$3232$}}
({cos(275)-0.92-0.08},{sin(275)-0.1}) node {\footnotesize{$323$}}
({cos(295)-0.92-0.08},{sin(295)-0.1}) node {\footnotesize{$32$}}
({cos(180)-0.92-0.325},{sin(180)+0.1}) node {\footnotesize{$y1$}}
({cos(180)-0.92-2*0.325},{sin(180)-0.1}) node {\footnotesize{$y12$}}
({cos(180)-0.92-0.325},{sin(200)-0.1}) node {\footnotesize{$y31$}}
;
\path [gray] 
({0.5*cos(45)-0.92+0.1},{0.5*sin(45)+0.06}) node {\footnotesize{$13$}}
({0.6*cos(65)-0.92},{0.6*sin(65)}) node {\footnotesize{$21$}}
({0.6*cos(85)-0.92},{0.6*sin(85)}) node {\footnotesize{$13$}}
({0.6*cos(105)-0.92},{0.6*sin(105)}) node {\footnotesize{$21$}}
({0.6*cos(140)-0.92},{0.6*sin(140)}) node {\footnotesize{$21$}}
({0.6*cos(160)-0.92},{0.6*sin(160)}) node {\footnotesize{$13$}}
({0.6*cos(220)-0.92},{0.6*sin(220)}) node {\footnotesize{$13$}}
({0.6*cos(255)-0.92-0.05},{0.6*sin(255)}) node {\footnotesize{$21$}}
({0.6*cos(275)-0.92-0.05},{0.6*sin(275)}) node {\footnotesize{$13$}}
({0.6*cos(295)-0.92-0.05},{0.6*sin(295)}) node {\footnotesize{$21$}}
({cos(180)-0.92-2*0.325-0.08},{sin(160)+0.08}) node {\footnotesize{$2121$}}
({cos(180)-0.92-2*0.325},{sin(200)-0.1}) node {\footnotesize{$212$}}
({cos(180)-0.7-4*0.325+0.1},{sin(180)+0.1}) node {\footnotesize{$323$}}
;
\path ({-cos(45)+0.92-0.12},{sin(45)+0.06}) node {\footnotesize{$21$}}
({-cos(65)+0.92-0.14},{sin(65)+0.06}) node {\footnotesize{$212$}}
({-cos(85)+0.92},{sin(85)+0.1}) node {\footnotesize{$2121$}}
({-cos(105)+0.92+0.05},{sin(105)+0.12}) node {\footnotesize{$21212$}}
({-cos(140)+0.92+0.1},{sin(140)+0.08}) node {\footnotesize{$x12$}}
({-cos(160)+0.92-0.12},{sin(160)}) node {\footnotesize{$x1$}}
({-cos(180)+0.92-0.1},{sin(180)}) node {\footnotesize{$x$}}
({-cos(200)+0.92+0.13},{sin(200)}) node {\footnotesize{$x2$}}
({-cos(220)+0.92+0.15},{sin(220)}) node {\footnotesize{$x21$}}
({-cos(255)+0.92+0.15},{sin(255)-0.1}) node {\footnotesize{$1212$}}
({-cos(275)+0.92+0.08},{sin(275)-0.1}) node {\footnotesize{$121$}}
({-cos(295)+0.92+0.08},{sin(295)-0.1}) node {\footnotesize{$12$}}
({-cos(180)+0.92+0.325},{sin(180)-0.1}) node {\footnotesize{$x3$}}
({-cos(180)+0.92+2*0.325},{sin(180)+0.1}) node {\footnotesize{$x32$}}
({-cos(180)+0.92+0.325},{sin(160)+0.1}) node {\footnotesize{$x13$}}
;
\path [gray] ({-0.5*cos(45)+0.92-0.1},{0.5*sin(45)+0.06}) node {\footnotesize{$13$}}
({-0.6*cos(65)+0.92},{0.6*sin(65)}) node {\footnotesize{$23$}}
({-0.6*cos(85)+0.92},{0.6*sin(85)}) node {\footnotesize{$13$}}
({-0.6*cos(105)+0.92},{0.6*sin(105)}) node {\footnotesize{$23$}}
({-0.6*cos(140)+0.92},{0.6*sin(140)}) node {\footnotesize{$13$}}
({-0.6*cos(200)+0.92},{0.6*sin(200)}) node {\footnotesize{$13$}}
({-0.6*cos(220)+0.92},{0.6*sin(220)}) node {\footnotesize{$23$}}
({-0.6*cos(255)+0.92+0.05},{0.6*sin(255)}) node {\footnotesize{$23$}}
({-0.6*cos(275)+0.92+0.05},{0.6*sin(275)}) node {\footnotesize{$13$}}
({-0.6*cos(295)+0.92+0.05},{0.6*sin(295)}) node {\footnotesize{$23$}}
({-cos(180)+0.92+2*0.325},{sin(160)+0.1}) node {\footnotesize{$232$}}
({-cos(180)+0.92+2*0.325+0.05},{sin(200)-0.1}) node {\footnotesize{$2323$}}
({-cos(180)+0.7+4*0.325-0.1},{sin(180)+0.1}) node {\footnotesize{$121$}}
;\end{tikzpicture}};
\end{tikzpicture}
\caption{Normal form automaton for Class~II with $a,b>4$ ($a$ even and $b$ odd in this example)}\label{fig:caseII}
\end{figure}

\begin{figure}[!h]
\centering
\begin{tikzpicture} [scale=4]
\node {\begin{tikzpicture} [scale=3.3]
\draw [dashed,->,domain=180:165,-triangle 45] plot ({cos(\x)},{sin(\x)});
\draw [->,domain=165:150,-triangle 45] plot ({cos(\x)},{sin(\x)});
\draw [dashed,->,domain=150:135,-triangle 45] plot ({cos(\x)},{sin(\x)});
\draw [->,domain=135:120,-triangle 45] plot ({cos(\x)},{sin(\x)});
\draw [dashed,->,domain=120:105,-triangle 45] plot ({cos(\x)},{sin(\x)});
\draw [->,domain=180:195,-triangle 45] plot ({cos(\x)},{sin(\x)});
\draw [dashed,->,domain=195:210,-triangle 45] plot ({cos(\x)},{sin(\x)});
\draw [->,domain=210:225,-triangle 45] plot ({cos(\x)},{sin(\x)});
\draw [dashed,->,domain=225:240,-triangle 45] plot ({cos(\x)},{sin(\x)});
\draw [->,domain=240:255,-triangle 45] plot ({cos(\x)},{sin(\x)});
\draw [dashed,->,domain=30:15,-triangle 45] plot ({cos(\x)},{sin(\x)});
\draw [->,domain=45:30,-triangle 45] plot ({cos(\x)},{sin(\x)});
\draw [dashed,->,domain=60:45,-triangle 45] plot ({cos(\x)},{sin(\x)});
\draw [->,domain=75:60,-triangle 45] plot ({cos(\x)},{sin(\x)});
\draw [dashed,->,domain=345:360,-triangle 45] plot ({cos(\x)},{sin(\x)});
\draw [->,domain=330:345,-triangle 45] plot ({cos(\x)},{sin(\x)});
\draw [dashed,->,domain=315:330,-triangle 45] plot ({cos(\x)},{sin(\x)});
\draw [->,domain=300:315,-triangle 45] plot ({cos(\x)},{sin(\x)});
\draw [dashed,->,domain=285:300,-triangle 45] plot ({cos(\x)},{sin(\x)});
\draw [->,gray,dashed,domain=105:75,-triangle 45] plot ({cos(\x)},{sin(\x)});
\draw [->,gray,dashed,domain=255:285,-triangle 45] plot ({cos(\x)},{sin(\x)});
\draw [dotted,->,-triangle 45] (-1,0) -- (-1.2,0);
\draw [dotted,->,-triangle 45] ({cos(195)},{sin(195)}) -- (-1.2,{sin(195)});
\draw [dotted,->,-triangle 45] ({cos(165)},{sin(165)}) -- (-1.2,{sin(165)});
\draw [dashed,->,-triangle 45] (-1.2,0) -- (-1.45,0);
\draw [dashed,->,-triangle 45] (-1.2,{sin(165)}) -- (-1.45,{sin(165)});
\draw [->,-triangle 45] (-1.45,{sin(165)}) -- (-1.7,{sin(165)});
\draw [dashed,->,-triangle 45] (-1.7,{sin(165)}) -- (-1.95,{sin(165)});
\draw [dotted,->,-triangle 45] (-1.95,{sin(165)}) -- (-1.95,{sin(165)+0.25});
\draw [->,-triangle 45] (-1.95,{sin(165)}) -- (-2.2,{sin(165)});
\draw [dotted,->,-triangle 45] (-2.2,{sin(165)}) -- (-2.2,{sin(165)+0.25});
\draw [dashed,->,-triangle 45] (-2.2,{sin(165)}) -- (-2.2,{sin(165)-0.25});
\draw [->,-triangle 45] (-1.45,{sin(180)}) -- (-1.7,{sin(180)});
\draw [dashed,->,-triangle 45] (-1.7,{sin(180)}) -- (-1.95,{sin(180)});
\draw [dashed,->,-triangle 45] (-1.2,{sin(195)}) -- (-1.45,{sin(195)});
\draw [->,-triangle 45] (-1.45,{sin(195)}) -- (-1.7,{sin(195)});
\draw [dashed,->,-triangle 45] (-1.7,{sin(195)}) -- (-1.95,{sin(195)});
\draw [dotted,->,-triangle 45] (-1.45,{sin(195)}) -- (-1.45,{sin(195)-0.25});
\draw [dotted,->,-triangle 45] (-1.7,{sin(195)}) -- (-1.7,{sin(195)-0.25});
\draw [dotted,->,-triangle 45] (1,{sin(0)}) -- (1.2,{sin(0)});
\draw [dotted,->,-triangle 45] ({cos(15)},{sin(15)}) -- (1.2,{sin(15)});
\draw [dotted,->,-triangle 45] ({cos(-15)},{sin(-15)}) -- (1.2,{sin(-15)});
\draw [dotted,->,-triangle 45] ({cos(-30)},{sin(-30)}) -- (1.2,{sin(-30)});
\draw [dashed,->,-triangle 45] (1.2,{sin(0)}) -- (1.45,{sin(0)});
\draw [dashed,->,-triangle 45] (1.2,{sin(-15)}) -- (1.45,{sin(-15)});
\draw [->,-triangle 45] (1.45,{sin(0)}) -- (1.7,{sin(0)});
\draw [->,-triangle 45] (1.45,{sin(-15)}) -- (1.7,{sin(-15)});
\draw [dashed,->,-triangle 45] (1.7,{sin(0)}) -- (1.95,{sin(0)});
\draw [dashed,->,-triangle 45] (1.7,{sin(-15)}) -- (1.95,{sin(-15)});
\draw [dotted,->,-triangle 45] (1.95,{sin(0)}) -- (2.2,{sin(0)});
\draw [->,-triangle 45] (1.95,{sin(0)}) -- (1.95,{sin(15)});
\draw [dotted,->,-triangle 45] (1.95,{sin(15)}) -- (2.2,{sin(15)});
\draw [dashed,->,-triangle 45] (1.95,{sin(15)}) -- (1.7,{sin(15)});
\draw [dashed,->,-triangle 45] (1.2,{sin(15)}) -- (1.45,{sin(15)});
\draw [dashed,->,-triangle 45] (1.2,{sin(-30)}) -- (1.45,{sin(-30)});
\draw [->,-triangle 45] (-1.2,{sin(180-15)}) -- (-1.2,{sin(180-15)+0.25});
\draw [->,dotted,-triangle 45] ({cos(30)},{sin(30)}) -- ({0.8*cos(30)},{0.8*sin(30)});
\draw [->,dotted,-triangle 45] ({cos(45)},{sin(45)}) -- ({0.8*cos(45)},{0.8*sin(45)});
\draw [->,dotted,-triangle 45] ({cos(60)},{sin(60)}) -- ({0.8*cos(60)},{0.8*sin(60)});
\draw [->,dotted,-triangle 45] ({cos(75)},{sin(75)}) -- ({0.8*cos(75)},{0.8*sin(75)});
\draw [->,dotted,-triangle 45] ({cos(180-2*15)},{sin(180-2*15)}) -- ({0.8*cos(180-2*15)},{0.8*sin(180-2*15)});
\draw [->,dotted,-triangle 45] ({cos(180-3*15)},{sin(180-3*15)}) -- ({0.8*cos(180-3*15)},{0.8*sin(180-3*15)});
\draw [->,dotted,-triangle 45] ({cos(180-4*15)},{sin(180-4*15)}) -- ({0.8*cos(180-4*15)},{0.8*sin(180-4*15)});
\draw [->,dotted,-triangle 45] ({cos(180-5*15)},{sin(180-5*15)}) -- ({0.8*cos(180-5*15)},{0.8*sin(180-5*15)});
\draw [->,dotted,-triangle 45] ({cos(180+2*15)},{sin(180+2*15)}) -- ({0.8*cos(180+2*15)},{0.8*sin(180+2*15)});
\draw [->,dotted,-triangle 45] ({cos(180+3*15)},{sin(180+3*15)}) -- ({0.8*cos(180+3*15)},{0.8*sin(180+3*15)});
\draw [->,dotted,-triangle 45] ({cos(180+4*15)},{sin(180+4*15)}) -- ({0.8*cos(180+4*15)},{0.8*sin(180+4*15)});
\draw [->,dotted,-triangle 45] ({cos(180+5*15)},{sin(180+5*15)}) -- ({0.8*cos(180+5*15)},{0.8*sin(180+5*15)});
\draw [->,dotted,-triangle 45] ({cos(-45)},{sin(-45)}) -- ({0.8*cos(-45)},{0.8*sin(-45)});
\draw [->,dotted,-triangle 45] ({cos(-60)},{sin(-60)}) -- ({0.8*cos(-60)},{0.8*sin(-60)});
\draw [->,dotted,-triangle 45] ({cos(-75)},{sin(-75)}) -- ({0.8*cos(-75)},{0.8*sin(-75)});
\path (-0.95,0) node {\footnotesize{$\emptyset$}}
({cos(180+15)+0.07},{sin(180+15)}) node {\footnotesize{$1$}}
({cos(180-15)+0.07},{sin(180-15)}) node {\footnotesize{$2$}}
(-1.2,{sin(180-15)-0.07}) node {\footnotesize{$23$}}
(-1.2,{sin(180-15)+0.3}) node [gray] {\footnotesize{$13$}}
(-1.2,{sin(180)+0.07}) node {\footnotesize{$3$}}
(-1.2,{sin(180)-0.25-0.07}) node {\footnotesize{$13$}}
(-1.45,{sin(180)-0.25+0.07}) node {\footnotesize{$132$}}
(-1.7,{sin(180)-0.25+0.07}) node {\footnotesize{$1321$}}
(-1.7,{sin(180)-0.55}) node [gray] {\footnotesize{$2321$}}
(-1.45,{sin(180)-0.55}) node [gray] {\footnotesize{$232$}}
(-2.1,{sin(180)-0.257}) node [gray] {\footnotesize{$1212$}}
(-1.45,{sin(180)-0.07}) node {\footnotesize{$32$}}
(-1.7,{sin(180)-0.07}) node {\footnotesize{$321$}}
(-1.95,{sin(180)+0.075}) node [gray] {\footnotesize{$212$}}
(-1.45,{sin(180-15)+0.07}) node {\footnotesize{$232$}}
(-1.7,{sin(180-15)+0.07}) node {\footnotesize{$2321$}}
(-1.95,{sin(180-15)-0.07}) node {\footnotesize{$23212$}}
(-2.35,{sin(180-15)}) node {\footnotesize{$232121$}}
(-2.2,{sin(180-15)+0.3}) node [gray] {\footnotesize{$2321$}}
(-2.22,{sin(180-15)-0.3}) node [gray] {\footnotesize{$21212$}}
(-1.95,{sin(180-15)+0.3}) node [gray] {\footnotesize{$232$}}
({cos(180-30)-0.09},{sin(180-30)+0.02}) node {\footnotesize{$21$}}
({cos(180-45)-0.11},{sin(180-45)+0.02}) node {\footnotesize{$212$}}
({cos(180-60)-0.11},{sin(180-60)+0.06}) node {\footnotesize{$2121$}}
({cos(180-75)-0.09},{sin(180-75)+0.08}) node {\footnotesize{$21212$}}
({cos(180+30)-0.09},{sin(180+30)-0.02}) node {\footnotesize{$12$}}
({cos(180+45)-0.14},{sin(180+45)-0.02}) node {\footnotesize{$121$}}
({cos(180+60)-0.11},{sin(180+60)-0.06}) node {\footnotesize{$1212$}}
({cos(180+75)-0.09},{sin(180+75)-0.08}) node {\footnotesize{$12121$}}
({0.8*cos(180-30)+0.07},{0.8*sin(180-30)}) node [gray] {\footnotesize{$13$}}
({-0.8*cos(180-30)-0.07},{0.8*sin(180-30)}) node [gray] {\footnotesize{$13$}}
({0.8*cos(180-45)+0.05},{0.8*sin(180-45)-0.03}) node [gray] {\footnotesize{$23$}}
({-0.8*cos(180-45)-0.05},{0.8*sin(180-45)-0.03}) node [gray] {\footnotesize{$23$}}
({0.8*cos(180-60)+0.02},{0.8*sin(180-60)-0.06}) node [gray] {\footnotesize{$13$}}
({-0.8*cos(180-60)-0.02},{0.8*sin(180-60)-0.06}) node [gray] {\footnotesize{$13$}}
({0.8*cos(180-75)+0.02},{0.8*sin(180-75)-0.05}) node [gray] {\footnotesize{$23$}}
({-0.8*cos(180-75)-0.02},{0.8*sin(180-75)-0.05}) node [gray] {\footnotesize{$23$}}
({0.8*cos(180+30)+0.07},{0.8*sin(180+30)}) node [gray] {\footnotesize{$23$}}
({0.8*cos(180+45)+0.05},{0.8*sin(180+45)+0.03}) node [gray] {\footnotesize{$13$}}
({0.8*cos(180+60)+0.02},{0.8*sin(180+60)+0.06}) node [gray] {\footnotesize{$23$}}
({0.8*cos(180+75)+0.02},{0.8*sin(180+75)+0.05}) node [gray] {\footnotesize{$13$}}
({-0.8*cos(180+75)-0.02},{0.8*sin(180+75)+0.05}) node [gray] {\footnotesize{$13$}}
({-0.8*cos(180+60)-0.02},{0.8*sin(180+60)+0.06}) node [gray] {\footnotesize{$23$}}
({-0.8*cos(180+45)-0.05},{0.8*sin(180+45)+0.03}) node [gray] {\footnotesize{$13$}}
({cos(0)-0.08},{sin(0)}) node {\footnotesize{$x$}}
({1.2},{-0.05}) node {\footnotesize{$x3$}}
({1.2},{-0.175}) node {\footnotesize{$x23$}}
({1.2},{-0.57}) node {\footnotesize{$x213$}}
({1.52},{-0.5}) node [gray] {\footnotesize{$232$}}
({1.45},{0.075}) node {\footnotesize{$x32$}}
({1.45},{-0.33}) node {\footnotesize{$x232$}}
({1.7},{-0.33}) node {\footnotesize{$x2321$}}
({1.95},{-0.19}) node [gray] {\footnotesize{$1212$}}
({2.2},{0.07}) node [gray] {\footnotesize{$232$}}
({2.2},{sin(15)-0.07}) node [gray] {\footnotesize{$2321$}}
({1.7},{0.075}) node {\footnotesize{$x321$}}
({1.76},{sin(15)-0.08}) node [gray] {\footnotesize{$121212$}}
({1.95},{-0.065}) node {\footnotesize{$x3212$}}
({1.95},{0.32}) node {\footnotesize{$x32121$}}
({cos(15)-0.1},{sin(15)}) node {\footnotesize{$x1$}}
({1.2},{sin(15)+0.075}) node {\footnotesize{$x13$}}
({1.4},{sin(15)-0.08}) node [gray] {\footnotesize{$232$}}
({cos(30)+0.12},{sin(30)}) node {\footnotesize{$x12$}}
({cos(45)+0.14},{sin(45)+0.02}) node {\footnotesize{$x121$}}
({cos(60)+0.14},{sin(60)+0.06}) node {\footnotesize{$x1212$}}
({cos(75)+0.13},{sin(75)+0.07}) node {\footnotesize{$x12121$}}
({cos(-15)-0.1},{sin(-15)}) node {\footnotesize{$x2$}}
({cos(-30)-0.125},{sin(-30)}) node {\footnotesize{$x21$}}
({cos(-45)+0.125},{sin(-45)}) node {\footnotesize{$x212$}}
({cos(-60)+0.1},{sin(-60)-0.05}) node {\footnotesize{$x2121$}}
({cos(-75)+0.13},{sin(-75)-0.07}) node {\footnotesize{$x21212$}}
;\end{tikzpicture}};
\end{tikzpicture}
\caption{Normal form automaton for Class~III (with $a$ even in this example)}\label{fig:caseIII}
\end{figure}

Finally, for a Class IV Fuchsian Coxeter system with $S=\{s_1,\ldots,s_n\}$ the normal form automaton is constructed from the Cannon automaton by removing the arrows
$$
\underbrace{s_{i+1}s_is_{i+1}\cdots}_{\text{$m_{i,i+1}-1$ terms}}\rightarrow \underbrace{s_{i+1}s_is_{i+1}\cdots}_{\text{$m_{i,i+1}$ terms}}\quad\text{and}\quad \underbrace{s_{n}s_1s_{n}\cdots}_{\text{$m_{1,n}-1$ terms}}\rightarrow \underbrace{s_{n}s_1s_{n}\cdots}_{\text{$m_{1,n}$ terms}}
$$
for $i=1,\ldots,n-1$.

We now show that the normal form automaton for each class is strongly connected. In the notation of Figure~\ref{fig:caseI} we will simply write $w$ for the cone type $T(w)$. 

Consider Class~I first. Note that that the cone types $\emptyset$, $1$, $2$ and $3$ are obviously not recurrent. Since $m_{12}=a>3$ there is a cycle in the normal form automaton
$$
c=(12\to23\to31\to12\to 121\to13\to32\to 21\to 212\to 23\to31\to 12)
$$
containing all cone types $w$ with $\ell(w)=2$ (it is important here that $121$ and $212$ are not the longest words of $W_{12}$, since $m_{12}>3$). Next we claim that for each cone type $w$ there is a path $\gamma_1$ in the normal form automaton to some cone of the form $ij$ (with $i\neq j$). This is immediate if $w\notin\{x,y,z,x3,y1,z2\}$, and for these remaining cases we note that (since $m_{12}>3$):
\begin{align*}
x&\to x3\to 131\to 12&&\text{if $m_{13}>3$}\\
x&\to x3\to 131=z\to z2\to 121\to13&&\text{if $m_{13}=3$}\\
y&\to y1\to 212\to 23\\
z&\to z2\to 121 \to 13 
\end{align*}
Next, it is clear that for all cone types $w$ other than $\emptyset,1,2,3$ there is a path $\gamma_2$ in the automaton from a cone type of the form $ij$ to $w$. Thus, using $c,\gamma_1$, and $\gamma_2$, we see that for each cone type other than $\emptyset,1,2,3$ there is a loop $\gamma$ from $12$ to $12$ passing through $w$. This shows that each cone type other than $\emptyset,1,2,3$ is recurrent, and that the normal form automaton is strongly connected.

Now consider Class~II, and for simplicity consider the case of Figure~\ref{fig:caseII}, that is, $m_{12},m_{23}>4$ with $a=m_{12}$ even and $b=m_{23}$ odd (the other cases are similar). It is clear that the cone types $\emptyset,1,2,3,12,32$ are not recurrent, and we claim that all other cone types are recurrent, and that the Cannon automaton is strongly connected. To see this, consider the paths:
\begin{align*}
\gamma_1&=(21\to212\to\cdots\to x1\to x13\to 232\to2323\to 13\to 132\to 121)\\
\gamma_2&=(121\to 1212\to\cdots\to x\to x3\to x32\to 121)\\
\gamma_3&=(121\to 23\to 232\to\cdots\to y\to y1\to y12 \to 323)\\
\gamma_4&=(323\to 3232\to\cdots\to y3\to y31\to 212)\\
\gamma_5&=(212\to 23\to 13\to 132\to 323\to 3232\to 21).
\end{align*}
The concatenation $\gamma=\gamma_1\gamma_2\gamma_3\gamma_4\gamma_5$ is a loop visiting all cone types other than $\emptyset,1,2,3,12,32$, hence the result. 

Now consider Class~III, and for simplicity consider the case of Figure~\ref{fig:caseIII}, that is, $a=m_{12}>6$ even (the case $a$ odd is similar). It is clear that the cone types $\emptyset,1,2,3,12,21,32,121,212,321$, $2121$ are not recurrent. We claim that all other cone types are recurrent, and that the automaton is strongly connected. Define the following paths:
\begin{align*}
\gamma_1&=(232\to 2321\to 23212\to 232121\to 21212\to \cdots\to x1\to x13\to 232)\\
\gamma_2&=(232\to 2321\to 23212\to 232121\to 21212\to 23\to 13\to 132\to 1321\to 1212)\\
\gamma_3&=(1212\to \cdots\to x21\to x213\to 232)\\
\gamma_4&=(1212\to\cdots\to x2\to x23\to x232\to x2321\to 1212)\\
\gamma_5&=(1212\to \cdots\to x\to x3\to x32\to x321\to x3212\to x32121\to 121212)\\
\gamma_6&=(121212\to\cdots \to x\to x3\to x32\to x321\to x3212\to 232).
\end{align*} 
The concatenation $\gamma=\gamma_1\gamma_2\gamma_3\gamma_2\gamma_4\gamma_5\gamma_6$ is a loop starting and finishing at $232$ and including every cone type other than $\emptyset,1,2,3,12,21,32,121,212,321,2121$, hence the result.

Finally, consider the groups in Class~IV. Let $1,2,\ldots,n$ be the generators of $W$, arranged cyclically around the fundamental chamber. If $n\geq 5$, then for each pair $(i,i+1)$ there is a generator $j$ with $m_{i,j}=\infty$ and $m_{{i+1},j}=\infty$, and thus $i\to j \to {i+1}$ in the automaton (see \cite[Lemma~A.6]{gilch-mueller-parkinson:14}). Moreover, for any $w\in W_{i,i+1}$ we have $w\to j$, and it follows that every node other than $\emptyset$ is recurrent, and moreover that the normal form automaton is strongly connected. If $n=4$ then we may assume that $m_{12}\geq 3$ (for if $m_{ij}=2$ for all $i,j$ then $W$ is affine type $\tilde{A}_1\times\tilde{A}_1$, where $\tilde{A}_1$ is the infinite dihedral group). Then $21\to 3$ and $12\to 4$. Thus $1\to 12\to 4\to 2\to21\to3\to1$. Again it follows that every node other than $\emptyset$ is recurrent, and that the normal form automaton is strongly connected.
\end{proof}

\end{appendix}

\bibliographystyle{abbrv}
\bibliography{literatur}

\begin{thebibliography}{10}

\bibitem{AB}
P.~Abramenko and K.~Brown.
\newblock {\em Buildings. Theory and Applications}, volume 248 of {\em Graduate
  Texts in Mathematics}.
\newblock Springer, 2008.

\bibitem{AlCo:88}
P.~H. Algoet and T.~M. Cover.
\newblock A sandwhich proof of the {S}hannon--{M}c{M}illan--{B}reiman theorem.
\newblock {\em Ann. Probab.}, 16(2):899--909, 1988.

\bibitem{avez72}
A.~Avez.
\newblock Entropie des groupes de type fini.
\newblock {\em C.\,R.\,Acad.\,Sci.\,Paris S\'er. A-B}, 275:A1363--A1366, 1972.

\bibitem{benjamini-peres94}
I.~Benjamini and Y.~Peres.
\newblock Tree-indexed random walks on groups and first passage percolation.
\newblock {\em Probab. Theory Related Fields}, 98(1):91--112, 1994.

\bibitem{blachere-haissinsky-mathieu}
S.~Blach\`ere, P.~Ha\"issinsky, and P.~Mathieu.
\newblock Asymptotic entropy and {G}reen speed for random walks on countable
  groups.
\newblock {\em Ann. Probab.}, 36(3):1134--1152, 2008.

\bibitem{howlett}
B.~Brink and B.~Howlett.
\newblock A finiteness property and an automatic structure for {C}oxeter
  groups.
\newblock {\em Math. Ann.}, 296(1):179--190, 1993.

\bibitem{cover-thomas}
T.~Cover and J.~Thomas.
\newblock {\em Elements of Information Theory}.
\newblock Wiley \& Sons, 2nd edition, 2006.

\bibitem{davis}
M.~Davis.
\newblock {\em The geometry and topology of Coxeter groups}, volume~32 of {\em
  London Mathematical Society Monograph Series}.
\newblock Princeton University Press, 2008.

\bibitem{cannon}
D.~Epstein, J.~Cannon, D.~Holt, V.~Levy, M.~Paterson, and W.~Thurston.
\newblock {\em Word processing in groups}.
\newblock Jones and Bertlett Publishers, Boston, MA, 1992.

\bibitem{Forghani}
B.~Forghani.
\newblock Asymptotic entropy of transformed random walks.
\newblock {\em Preprint, http://arxiv.org/abs/1410.0457}, 2014.

\bibitem{gilch:14}
L.~Gilch.
\newblock Asymptotic entropy of random walks on regular languages over a finite
  alphabet.
\newblock {\em Preprint}, 2014.

\bibitem{gilch-ledrappier13}
L.~Gilch and F.~Ledrappier.
\newblock Regularity of the drift and entropy of random walks on groups.
\newblock {\em Publ. Mat. Urug.}, 14:147--158, 2013.

\bibitem{gilch-mueller-parkinson:14}
L.~Gilch, S.~M\"uller, and J.~Parkinson.
\newblock Limit theorems for random walks on {F}uchsian buildings and
  {K}ac-{M}oody groups.
\newblock {\em Preprint}, 2014.

\bibitem{gilch:11}
L.~A. Gilch.
\newblock Asymptotic entropy of random walks on free products.
\newblock {\em Electron. J. Probab.}, 16:76--105, 2011.

\bibitem{gilch-mueller}
L.~A. Gilch and S.~M\"uller.
\newblock Random walks on directed covers of graphs.
\newblock {\em J. Theoret. Probab.}, 24(1):118--149, 2009.

\bibitem{HMM:13}
P.~Ha\"issinsky, P.~Mathieu, and S.~M\"uller.
\newblock Renewal theory for random walks on surface groups.
\newblock {\em Preprint}, 2013.

\bibitem{kingman}
J.~Kingman.
\newblock The ergodic theory of subadditive stochastic processes.
\newblock {\em J. Roy. Statist. Soc. Ser. B}, 30:499--510, 1968.

\bibitem{ledrappier12-2}
F.~Ledrappier.
\newblock Regularity of the entropy for random walks on hyperbolic groups.
\newblock {\em Ann. Probab.}, 41(5):3582--3605, 2013.

\bibitem{LL:10}
S.~Lim and F.~Ledrappier.
\newblock Volume entropy for hyperbolic buildings.
\newblock {\em J. Mod. Dyn.}, 4(1):139--165, 2010.

\bibitem{P1}
J.~Parkinson.
\newblock Buildings and {H}ecke algebras.
\newblock {\em J. Algebra}, 297(1):1--49, 2006.

\bibitem{Par:15}
J.~Parkinson.
\newblock Buildings, groups of {L}ie type, and random walks.
\newblock {\em To appear in proceedings of "Groups, graphs, and random walks",
  Cortona, Italy, 2014}, 2015.

\bibitem{Ron:86}
M.~Ronan.
\newblock A construction of buildings with no rank 3 residues of spherical
  type.
\newblock In {\em Buildings and the Geometry of Diagrams}, volume 1181 of {\em
  Lecture Notes in Mathematics}, pages 242--248, 1986.

\bibitem{Ti:69}
J.~Tits.
\newblock Le probl\`{e}me des mots dans les groupes de {C}oxeter.
\newblock {\em Symposia mathematica (INDAM, Rome, 1967/68)}, 1:175--185, 1969.

\bibitem{Ti:74}
J.~Tits.
\newblock Buildings of spherical type and finite {B}{N}-pairs.
\newblock {\em Lecture Notes in Mathematics}, 386:x+299 pp, 1974.

\bibitem{Ti:86}
J.~Tits.
\newblock Immeubles de type affine.
\newblock In {\em Buildings and the geometry of diagrams (Como, 1984), Lecture
  Notes in Mathematics}, pages 159--190. Springer, Berlin, 1986.

\bibitem{Ti:87}
J.~Tits.
\newblock Uniqueness and presentation of {K}ac-{M}oody groups over fields.
\newblock {\em J. Algebra}, 105(2):542--573, 1987.

\bibitem{woess}
W.~Woess.
\newblock {\em Random Walks on Infinite Graphs and Groups}, volume 138 of {\em
  Cambridge Tracts in Mathematics}.
\newblock Cambridge University Press, 2000.

\end{thebibliography}
\medskip

\noindent\begin{minipage}{0.5\textwidth}
Lorenz Gilch\newline
Institut f\"{u}r Mathematische Strukturtheorie\newline
Technische Universit\"{a}t Graz\newline
Steyrergasse 30\newline
8010 Graz, Austria\newline
\texttt{Lorenz.Gilch@freenet.de}
\end{minipage}
\begin{minipage}{0.5\textwidth}
\noindent Sebastian M\"{u}ller\newline
Aix Marseille Universit\'e\newline
CNRS  Centrale Marseille\newline 
Institut de Math\'ematiques de Marseille (I2M)\newline
UMR 7373\newline 13453 Marseille France\newline
\texttt{sebastian.muller@univ-amu.fr}
\end{minipage}
\bigskip

\noindent James Parkinson\newline
School of Mathematics and Statistics\newline
University of Sydney\newline
Carslaw Building, F07\newline
NSW, 2006, Australia\newline
\texttt{jamesp@maths.usyd.edu.au}

\end{document}